\documentclass[12pt]{amsart}
\usepackage{amsfonts, amssymb}
\usepackage{amsmath}
\usepackage{amsthm}
\usepackage{cite}
\numberwithin{equation}{section}

\newtheorem{theorem}{Theorem}[section]
\newtheorem{lemma}[theorem]{Lemma}

\newtheorem{corollary}[theorem]{Corollary}

\newtheorem{proposition}[theorem]{Proposition}

\begin{document}

\title[Non-oscillatory solutions]{On non-oscillatory solutions of the \\
generalized Hill equation}

\author{Yueyang Zhang}
%\footnote{*Corresponding author:~zhangyueyang@ustb.edu.cn}
%\footnote{Corresponding author}
%\authornote{*Corresponding author}
\address{School of Mathematics and Physics, University of Science and Technology Beijing, No.~30 Xueyuan Road, Haidian, Beijing, 100083, P.R. China}
\email{zhangyueyang@ustb.edu.cn}

%\author{Yinjie Feng}
%\address{School of Mathematics and Physics, University of Science and Technology Beijing, No.~30 Xueyuan Road, Haidian, Beijing, 100083, P.R. China}
%\email{m202410716@ustb.edu.cn}

\thanks{The author is supported by grants from a Project supported by the National Natural Science Foundation of China~{(12301091)}}

\subjclass[2010]{Primary 34M10; Secondary 30D35}

\keywords{Generalized Hill equation; Non-oscillatory solutions; Liouvillian solutions; Kovacic's algorithms; Tumura--Clunie equation; Nevanlinna theory}

\date{\today}

\commby{}

\begin{abstract}
Let $\varrho\neq -1$ be a complex number with modulus $|\varrho|=1$ and $K(x,y)=P_1(x)+P_2(y)$ for two polynomials $P_1$ and $P_2$. We consider the non-oscillatory solutions such that $\lambda(f)<\infty$ of the generalized Hill equation $f''-K(e^z,e^{-\varrho z})f=0$ ($\sharp$). When $\varrho=1$, the Hill equation $f''-K(e^z,e^{-z})f=0$ ($\dag$) is also written as $x^2u''-[K(x,x^{-1})-1/4]u=0$ ($\ddag$). We point out that there is a full correspondence between the non-oscillatory solutions of equation ($\dag$) and the Liouvillian solutions of equation ($\ddag$). Then this paper has two purposes. First, we show that if equation ($\sharp$) has a nonzero non-oscillatory solution, then $\varrho=1$. To this end, we solve entire solutions of a general Tumura--Clunie type differential equation. Second, for the particular Hill equation $f''-(e^{\mathbf{k}z}+b_{\mathbf{s}}e^{\mathbf{s}z}+b_0)f=0$, where $\mathbf{k}>\mathbf{s}\geq 1$ are integers and $b_{\mathbf{s}}\not=0$, we use Kovacic's algorithms to determine the non-oscillatory solutions with relatively few zeros.
\end{abstract}

\maketitle

%\newpage

\section{Introduction}\label{se1: introduction}

The value distribution theory in complex differential equations has been a subject in the last several decades; see \cite{Laine1993} and references therein. Many important results are obtained under the framework of the Nevanlinna theory; see, e.g., \cite{Hayman1964Meromorphic,Laine1993} for the standard notation and basic results. Bank and Laine~\cite{Banklaine1982,Banklaine1982-2} initiated the study on the complex oscillation of the second order linear differential equation
\begin{equation}\label{bank-laine Eq}
f''+Af=0,
\end{equation}
where $A$ is a transcendental entire function. All nonzero solutions of equation \eqref{bank-laine Eq} are entire functions having order of growth $\sigma(f)=\infty$. Bank and Laine~\cite{Banklaine1982,Banklaine1982-2} proved: For two linearly independent solutions $f_1$ and $f_2$ of equation \eqref{bank-laine Eq}, if $\sigma(A)\not\in \mathbb{N}$, then $\lambda(f_1f_2)\geq \sigma(A)$; if $\sigma(A)<1/2$, then $\lambda(f_1f_2)=\infty$. The condition $\sigma(A)<1/2$ was relaxed to $\sigma(A)=1/2$ by Shen~\cite{Shen1985} and Rossi~\cite{Rossi1986} independently. \emph{The Bank--Laine conjecture} asserts that $\lambda(f_1f_2)=\infty$ whenever $\sigma(A)\not\in \mathbb{N}$. This conjecture motivated a lot of research in the last several decades; see the surveys~\cite{Gundersen2014,lainetohge2008} and references therein. In recent years, Bergweiler and Eremenko~\cite{Bergweilereremenko2017,Bergweilereremenko2019} disproved the Bank--Laine conjecture and the present author~\cite{Zhang2024,Zhang2025} fulfilled their construction of Bank--Laine functions.

We say that $f$ is a \emph{non-oscillatory} solution of equation \eqref{bank-laine Eq} if $\lambda(f)<\infty$. Let $\varrho \neq -1$ be a complex number with modulus $|\varrho|=1$ and $K(x,y)=P_1(x)+P_2(y)$ for two polynomials $P_1$ and $P_2$ of the form
\begin{equation}\label{bank-laine0-fu-0}
K(x,y)=b_{\mathbf{k}}x^{\mathbf{k}}+\cdots+b_{1}x+b_0+b_{-1}y+\cdots+b_{-\mathbf{l}}y^{\mathbf{l}},
\end{equation}
where $\mathbf{k}\geq 1$, $\mathbf{l}\geq 0$ are two integers, $b_{\mathbf{k}}$, $\cdots$, $b_{-\mathbf{l}}$ are constants such that $b_{\mathbf{k}}\not=0$. Then $K(e^{z},e^{-\varrho z})$ is an entire function having order of growth~$1$. In this paper, we are concerned with the question if the \emph{generalized Hill equation}
\begin{equation}\label{Hill Eq General}
f''-K(e^{z},e^{-\varrho z})f=0
\end{equation}
can have nonzero non-oscillatory solutions. When $\varrho=1$, $K(x,x^{-1})$ is rational in $x$ and analytic in $\mathbb{C}-\{0\}$ and $K(e^{z},e^{-z})$ is an entire periodic function of period $2\pi\mathbf{i}$. Here and in the following, we use the bolded $\mathbf{i}$ to denote the \emph{imaginary unit}. A remarkable result in \cite{Banklaine1983-1,Chiang2000} states that, if the Hill equation
\begin{equation}\label{Hill Eq}
f''-K(e^{z},e^{-z})f=0
\end{equation}
has a nonzero non-oscillatory solution $f$, then there exist complex constants $c$, a polynomial $\psi$ with simple roots only and a Laurent polynomial $\chi$ such that
\begin{equation}\label{Bank-laine0-eq-1}
f(z)=\psi(e^{z/\mathbf{p}})e^{cz}e^{\chi(e^{z/\mathbf{p}})},
\end{equation}
where $\mathbf{p}=1$ if $\mathbf{k}$ and $\mathbf{l}$ are both even, or $\mathbf{p}=2$ if $\mathbf{k}$ is an odd positive integer and $b_{-1}=\cdots=b_{-\mathbf{l}}=0$. We may write the Laurent polynomial $\chi$ in the form $\chi(z)=\sum_{i=0}^{\mathbf{k}/2-1}c_{i}z^{\mathbf{k}/2-i}+\sum_{j=0}^{\mathbf{l}/2-1}d_{j}z^{-(\mathbf{l}/2-j)}$ in the first case and $\chi(z)=\sum_{i=0}^{\mathbf{k}-1}c_{i}z^{\mathbf{k}-i}$ such that $c_{i}=0$ whenever $i$ is even in the latter case.

By doing the transformations $x=e^{z/\mathbf{p}}$ and $f(z)=y(x)$ to the Hill equation \eqref{Hill Eq} and then the transformation $y=x^{-1/2}u$ to the resulting equation, we obtain
\begin{equation}\label{Bank-laine se3simple hobessel5tran1}
x^{2}u''-\left[\mathbf{p}^2K(x^{\mathbf{p}},x^{-\mathbf{p}})-\frac{1}{4}\right]u=0.
\end{equation}
Then equation \eqref{Bank-laine se3simple hobessel5tran1} has a \emph{Liouvillian} solution of the form $u=x^{1/2}\psi(x)x^{c}e^{\chi(x)}$. See~\cite{mariusmichael2006} for the basic definitions from the differential Galois theory. Denote the field of rational functions by $\mathbb{C}(x)$. Kovacic~\cite{Kovacic1986} has considered the (nonzero) Liouvillian solutions of the linear differential equation $y''=ry$ ($\star$), where $r\in\mathbb{C}(x)$, and proved that precisely four cases can occur:
\begin{itemize}
  \item [(I)] Equation ($\star$) has a solution of the form $e^{\int \omega dx}$ where $\omega\in \mathbb{C}(x)$.
  \item [(II)] Equation ($\star$) has a solution of the form $e^{\int\omega dx}$ where $\omega$ is algebraic over $\mathbb{C}(x)$ of degree~2, and case~(I) does not occur.
  \item [(III)] All solutions of equation ($\star$) are algebraic over $\mathbb{C}(x)$ and cases~(I) and~(II) do not hold.
  \item [(IV)] Equation ($\star$) has no Liouvillian solutions.
\end{itemize}
We shall call \emph{Kovacic's cases} (I), (II), (III) and (IV). These four cases are mutually exclusive and exhaustive. In \cite[Section~2]{Kovacic1986}, Kovacic provided some necessary conditions for each of these four cases to occur and then developed algorithms to find the Liouvillian solutions of equation ($\star$). When $\mathbf{k}=4$ and $\mathbf{l}=0$, Chiang and Yu~\cite[Theorem~3.1]{ChiangYu2019} used Kovacic's algorithms to prove that there is a full correspondence between the non-oscillatory solutions of equation \eqref{Hill Eq} and the Liouvillian solutions of equation \eqref{Bank-laine se3simple hobessel5tran1}.

Here we complete the proof of \cite[Theorem~3.1]{ChiangYu2019}. By doing the transformations $x=e^{z}$ and $f(z)=y(x)$ to the Hill equation \eqref{Hill Eq} and then the transformation $y=x^{-1/2}u$ to the resulting equation, we obtain
\begin{equation}\label{Hill equation-equi}
x^{2}u''-\left[K(x,x^{-1})-\frac{1}{4}\right]u=0.
\end{equation}
If equation \eqref{Hill Eq} has a nonzero non-oscillatory solution $f$ of the form in \eqref{Bank-laine0-eq-1}, then equation \eqref{Hill equation-equi} has a nonzero Liouvillian solution $u$ of the form
\begin{equation}\label{Hill equation-equi-1}
u(x)=x^{1/2}\psi(x^{1/\mathbf{p}})x^{c}e^{\chi(x^{1/\mathbf{p}})}.
\end{equation}
This corresponds to Kovacic's cases (I) or (II) of equation \eqref{Hill equation-equi}. Conversely, we let $u$ be a nonzero solution of equation \eqref{Hill equation-equi}. Then the analytic continuation $f(z)=e^{-z/2}u(e^z)$ is a nonzero solution of equation~\eqref{Hill Eq}. Recall that $\omega$ is an algebraic function over $\mathbb{C}(x)$ if there are polynomials $r_0,\cdots,r_{n}\in\mathbb{C}(x)$ so that $\omega$ satisfies the irreducible equation $r_n\omega^{n}+r_{n-1}\omega^{n-1}+\cdots+r_0=0$. For Kovacic's case (I) of equation \eqref{Hill equation-equi}, by writing $\omega\in \mathbb{C}(x)$ as a partial fraction expansion in the sum of terms of the form $1/x$, $1/x^m$, $1/(x-\alpha)$, $1/(x-\beta)^n$ and $x^{k}$ with integers $m\geq 2$, $n\geq 2$ and $k\geq 0$, we may write $e^{\int \omega dx}$ as the product of terms of the form $x^{\gamma_1}$, $e^{\gamma_2x^{-m+1}}$, $(x-\alpha)^{\gamma_3}$, $e^{\gamma_4(x-\beta)^{-n+1}}$ and $x^{k+1}$. We see that the term $e^{\gamma_4(x-\beta)^{-n+1}}$ cannot appear and $u$ is of the form in \eqref{Hill equation-equi-1} with $\mathbf{p}=1$. It follows that $f$ is a non-oscillatory solution of equation~\eqref{Hill Eq}. For Kovacic's case (II) of equation \eqref{Hill equation-equi}, if $\omega$ has a nonzero branched point, say $x_0$, then $\int \omega dx$ also has a branched point at $x_0$ and then $f=e^{-z/2}u(e^z)$ cannot be an entire function. This implies that $\omega$ can only have branched point at the origin. Then we may also write the integral $\int \omega dx$ explicitly and conclude that $u$ is of the form in \eqref{Hill equation-equi-1} with $\mathbf{p}=2$. It follows that $f$ is a non-oscillatory solution of equation~\eqref{Hill Eq}. For Kovacic's case (III) of equation \eqref{Hill equation-equi}, it is easy to see that $\sigma(f)<\infty$, a contradiction. Recall that all nonzero solutions of equation~\eqref{Hill Eq} are entire functions. Since Kovacic's cases (I), (II), (III) and (IV) of equation \eqref{Hill equation-equi} are mutually exclusive and exhaustive and also that the two cases $\lambda(f)<\infty$ and $\lambda(f)=\infty$ of equation~\eqref{Hill Eq} are mutually exclusive and exhaustive, we conclude the following

\begin{proposition}\label{prososition0}
The Hill equation \eqref{Hill Eq} has a nonzero non-oscillatory solution $f$ if and only if equation \eqref{Hill equation-equi} has a nonzero Liouvillian solution $u$ in Kovacic's case~\emph{(I)} or~\emph{(II)}. Moreover, in Kovacic's case~\emph{(I)}, $\mathbf{k}$ and $\mathbf{l}$ are both even integers and $u$ has the form in \eqref{Hill equation-equi-1} with $\mathbf{p}=1$; in Kovacic's case~\emph{(II)}, $\mathbf{k}$ is an odd integer and $b_{-1}=\cdots=b_{-\mathbf{l}}=0$ and $u$ has the form in \eqref{Hill equation-equi-1} with $\mathbf{p}=2$.
\end{proposition}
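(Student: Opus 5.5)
The plan is to prove the two directions separately, using the dictionary $x=e^{z}$, $f(z)=x^{-1/2}u(x)$ between \eqref{Hill Eq} and \eqref{Hill equation-equi}, and then to read off the parity restrictions on $\mathbf{k},\mathbf{l}$ from the asymptotics at $x=\infty$ and $x=0$.

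\emph{Necessity.} Let $f\not\equiv0$ with $\lambda(f)<\infty$. By the result of \cite{Banklaine1983-1,Chiang2000}, $f$ has the form \eqref{Bank-laine0-eq-1}, and the accompanying parity conditions hold: either $\mathbf{k},\mathbf{l}$ are both even and $\mathbf{p}=1$, or $\mathbf{k}$ is odd, $b_{-1}=\cdots=b_{-\mathbf{l}}=0$ and $\mathbf{p}=2$. Substituting gives $u$ of the form \eqref{Hill equation-equi-1}, and its logarithmic derivative is
\[
\omega=u'/u=\tfrac{1}{2x}+\tfrac{c}{x}+\bigl(\psi(x^{1/\mathbf{p}})\bigr)'/\psi(x^{1/\mathbf{p}})+\bigl(\chi(x^{1/\mathbf{p}})\bigr)'.
\]
If $\mathbf{p}=1$, then $\omega\in\mathbb{C}(x)$, which is case (I). If $\mathbf{p}=2$, then $\omega\in\mathbb{C}(x^{1/2})$, so $\omega$ is algebraic of degree at most $2$.

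\emph{Sufficiency.} Let $u\not\equiv0$ be a case (I) or (II) solution, and set $f(z)=e^{-z/2}u(e^{z})$. Every solution of \eqref{Hill Eq} is entire, so $f$ is a single-valued entire function. In case (I), I would decompose $\omega$ into partial fractions. A term $\gamma(x-\beta)^{-n}$ with $\beta\neq0$ and $n\ge2$ would give $e^{\gamma'(x-\beta)^{1-n}}$, which is an essential singularity of $u$ at a point where \eqref{Hill equation-equi} is regular; this is impossible. At $\beta\neq0$, residues $\gamma$ must be positive integers, since otherwise $u$ is branched or has a pole at a regular point. Hence $u=x^{a}\psi(x)e^{\chi(x)}$ with $\psi$ a polynomial, $\chi$ a Laurent polynomial and $a\in\mathbb{C}$. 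Consequently $f=e^{(a-1/2)z}\psi(e^{z})e^{\chi(e^{z})}$, whose zeros are the zeros of $\psi(e^{z})$. These zeros form finitely many arithmetic progressions, so $\lambda(f)\le1<\infty$.

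In case (II), let $\omega$ satisfy a quadratic equation over $\mathbb{C}(x)$. A branch point $x_0\neq0$ of $\omega$ would make $u$ multivalued near the regular point $x_0$, contradicting the fact that $f$ is entire. So $\omega$ branches only at $0$ and $\infty$, and therefore $\omega\in\mathbb{C}(x^{1/2})$. Writing $t=x^{1/2}$ and repeating the case (I) analysis in the variable $t$ gives $u=x^{1/2}\psi(t)x^{c}e^{\chi(t)}$, and again $\lambda(f)<\infty$. Case (III) does not arise: there all solutions are algebraic, so $u$ has polynomial growth in $x$ and $\sigma(f)<\infty$. This contradicts the fact, recalled in the introduction, that every nonzero solution of \eqref{bank-laine Eq} with transcendental coefficient has infinite order.

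\emph{Parity refinement.} Comparing leading terms in the Riccati equation $\omega'+\omega^{2}=x^{-2}\bigl(K(x,x^{-1})-\tfrac14\bigr)$ at $x=\infty$ forces $\omega\sim\sqrt{b_{\mathbf{k}}}\,x^{\mathbf{k}/2-1}$. In case (I), $\omega$ is rational, so $\mathbf{k}$ must be even; the same comparison at $x=0$ forces $\mathbf{l}$ to be even (when $\mathbf{l}>0$). In case (II), the exponent of the leading term must be a half-integer, so $\mathbf{k}$ is odd. The condition $b_{-1}=\cdots=b_{-\mathbf{l}}=0$ is supplied by \cite{Banklaine1983-1,Chiang2000} applied to the non-oscillatory $f$ just constructed.

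The main obstacle is the case (II) step: proving rigorously that single-valuedness of the entire function $f$ excludes branch points of $\omega$ away from $0$, and that this puts $\omega$ in $\mathbb{C}(x^{1/2})$. I would handle it with local Frobenius analysis. At a regular point $x_0\neq0$ the local exponents are $0$ and $1$, so a holomorphic $u$ forces $\omega$ to be meromorphic, i.e.\ unbranched, at $x_0$.
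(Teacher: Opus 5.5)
Your route is the paper's: the substitution $f(z)=e^{-z/2}u(e^{z})$, the cited Bank--Laine/Chiang representation for necessity, partial fractions in case~(I), exclusion of branch points $x_0\neq0$ in case~(II) because $f$ is entire, and the finite-order contradiction in case~(III). Your Frobenius remark at regular points makes the case~(II) branch-point step rigorous, and the passage to $\mathbb{C}(x^{1/2})$ is fine. Your Riccati comparison at $x=\infty$ and $x=0$ is a valid self-contained proof that $\mathbf{k},\mathbf{l}$ are even in case~(I); the paper instead reads the parities off the cited classification.

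One step does not follow as written: the parity claim in case~(II). From $\omega\in\mathbb{C}(x^{1/2})\setminus\mathbb{C}(x)$ you only know that \emph{some} exponent in the expansion of $\omega$ at $\infty$ is a half-integer, not necessarily the leading one $\mathbf{k}/2-1$. So ``$\mathbf{k}$ odd'' is asserted rather than proved.

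The repair is the citation you already invoke for $b_{-1}=\cdots=b_{-\mathbf{l}}=0$. The classification of \cite{Banklaine1983-1,Chiang2000} leaves only two alternatives for your non-oscillatory $f$. The first ($\mathbf{k},\mathbf{l}$ even, $\mathbf{p}=1$) would give $u'/u\in\mathbb{C}(x)$, i.e.\ case~(I), which case~(II) excludes. So the second holds, giving $\mathbf{k}$ odd and $b_{-1}=\cdots=b_{-\mathbf{l}}=0$ at once.

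A direct alternative also works. The conjugate $\tilde\omega$ of $\omega$ under $x^{1/2}\mapsto-x^{1/2}$ also solves the Riccati equation, so $\omega-\tilde\omega=W/(u_1u_2)$ with $W\neq0$ and $u_1u_2=\exp\int(\omega+\tilde\omega)$. For even $\mathbf{k}\geq2$ one has $\omega+\tilde\omega\sim2\sqrt{b_{\mathbf{k}}}\,x^{\mathbf{k}/2-1}$. Then $u_1u_2$ grows exponentially in some sector, so $\omega-\tilde\omega$ could not be a nonzero algebraic function.

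Finally, the residues of $\omega$ at $\beta\neq0$ must equal $1$, not merely be positive integers, because a nonzero solution has only simple zeros at regular points. This is what gives $\psi$ simple roots, as required by the form \eqref{Bank-laine0-eq-1}.
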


By Proposition~\ref{prososition0}, solving the non-oscillatory solutions of the Hill equation~\eqref{Hill Eq} is equivalent to solving the Liouvillian solutions of equation~\eqref{Hill equation-equi}. Thus we may use Kovacic's algorithms to find the non-oscillatory solutions of equation \eqref{Hill Eq}.

The first purpose of this paper is to show that solving the non-oscillatory solutions of the generalize Hill equation \eqref{Hill Eq General} is still equivalent to solving the Liouvillian solutions of equation~\eqref{Hill equation-equi}. To this end, we have to show that $\varrho=1$ when equation \eqref{Hill Eq General} has a nonzero non-oscillatory solution. When $K(e^{z},e^{-\varrho z})=b_{\mathbf{k}}e^{\mathbf{k}z}+b_{-\mathbf{l}}e^{-\varrho\mathbf{l}z}$, this is indeed the case; see Ishizaki and Kazuya~\cite{Ishizakikazuya19971}.

We will actually consider the higher order linear differential equation
\begin{equation}\label{higher order-0}
f^{(n)}+\alpha_{n-1}f^{(n-1)}+\cdots+\alpha_1f'-K(e^{z},e^{-\varrho z})f=0,
\end{equation}
where $n\geq 2$ and $\alpha_1$, $\cdots$, $\alpha_{n-1}$ are constants. Each nonzero solution $f$ of equation \eqref{higher order-0} is an entire function and an elementary application of the lemma on the logarithmic derivative yields that $\sigma(f)=\infty$. We still say that $f$ is a non-oscillatory solution of equation \eqref{higher order-0} if $\lambda(f)<\infty$. We prove the following

\begin{theorem}\label{maintheorem1}
Suppose that equation \eqref{higher order-0} has a nonzero non-oscillatory solution $f$. If $b_{-\mathbf{l}}\neq 0$, then $\varrho=1$. Moreover, when $\varrho=1$, there exist complex constants $c$, a polynomial $\psi$ with simple roots only and a Laurent polynomial $\chi$ such that
\begin{equation}\label{Solution-gen}
f(z)=\psi(e^{z/n})e^{cz}e^{\chi(e^{z/n})}.
\end{equation}
\end{theorem}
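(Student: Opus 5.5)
Since $\lambda(f)<\infty$ and $\sigma(f)=\infty$, the plan is to use the standard Hadamard-type factorization. We write $f=\pi e^{g}$, where $\pi$ is the canonical product of the zeros of $f$ and $g$ is entire and transcendental. Then $\sigma(\pi)=\lambda(f)<\infty$. We set $h=g'$ and express $f^{(j)}/f$ as a differential polynomial in $h$ with coefficients that are differential polynomials in $\pi'/\pi,\ldots$. Substituting into \eqref{higher order-0} gives
\begin{equation*}
h^n+Q(z,h)=K(e^{z},e^{-\varrho z}),
\end{equation*}
where $Q$ is a differential polynomial in $h$ of total degree at most $n-1$. Its coefficients are small functions: they are built from $\pi^{(j)}/\pi$ and the constants $\alpha_j$, and these have finite order, while $h$ has infinite order or is at least of larger growth. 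This is a Tumura--Clunie type equation. By the Clunie lemma and the Tumura--Clunie/Hayman argument (in the refined form used in the paper for ``entire solutions of a general Tumura--Clunie type differential equation''), we first get $m(r,h)=O(T(r,K))+S(r,h)$. Hence $h$ has order at most $1$ apart from the contribution of the zeros of $\pi$. In particular, $g$ is a polynomial in $e^{z}$ and $e^{-\varrho z}$ plus a linear term, up to exceptional sets, and $\pi$ has order at most $1$.

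The key step is to show that $h^n+Q(z,h)$ must be essentially a perfect $n$-th power. Following the Tumura--Clunie theorem, we write $h^n+Q(z,h)=(h+Q_{n-1}/n)^n$ plus error terms, where $Q_{n-1}$ collects the degree-$(n-1)$ part of $Q$. The zeros of $K(e^z,e^{-\varrho z})$ in suitable regions then force $K=(h+\beta)^n$ for a small function $\beta$. For the exponential sum $K(e^{z},e^{-\varrho z})=\sum b_je^{jz}+\sum b_{-j}e^{-j\varrho z}$, we then compare the $n$-th root as an exponential polynomial. Borel's theorem on linear independence of $e^{\lambda z}$ with distinct frequencies gives that $(\sum a_\lambda e^{\lambda z})^n$ contains the frequency terms $e^{\lambda_{\max}z}$ and $e^{\lambda_{\min}z}$ together with all mixed sums $\mu_1+\cdots+\mu_n$ of frequencies. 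Since $K$ has only the frequencies $\{0,1,\ldots,\mathbf k\}\cup\{-\varrho,\ldots,-\mathbf l\varrho\}$ and $b_{-\mathbf l}\neq 0$, the frequencies of the root must lie on the segment joining $\mathbf k/n$ and $-\mathbf l\varrho/n$. The mixed term with frequency $((n-1)\mathbf k-\mathbf l\varrho)/n$ must also appear among the frequencies of $K$. If $\varrho\neq1$ is not real, this frequency is non-real and not of the form $-j\varrho$, unless $n$ and the other data degenerate; this forces a contradiction. The case $\varrho=-1$ is excluded by hypothesis, so we get $\varrho=1$. With $\varrho=1$, all frequencies are multiples of $1/n$. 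Hence $h=\chi_1(e^{z/n})+c$ and $g=c z+\chi(e^{z/n})$ with $\chi$ a Laurent polynomial. Finally, the zero-counting shows that $\pi$ is periodic of period $2\pi\mathbf{i}n$ with finitely many zeros per period. So $\pi=\psi(e^{z/n})$ up to an exponential factor absorbed into $e^{cz}$, and $\psi$ has simple roots by the uniqueness theorem for linear ODEs, which forbids multiple zeros at $f'=0$ when $n=2$; for general $n$ this is argued similarly from the equation on $\pi$.

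I expect the main obstacle to be the middle step: rigorously proving that $h$ is an exponential polynomial, i.e.\ that $\pi$ contributes only a small function and that the Tumura--Clunie identity $K=(h+\beta)^n$ holds exactly, rather than only up to $S(r,h)$. To handle this, I would first work in sectors where $e^{z}$ or $e^{-\varrho z}$ dominates. There I would use asymptotic integration of the equation (WKB/Wiman--Valiron type estimates) to get $h\sim K^{1/n}$, and then glue the sectors using the finite order of $\pi$ and the Phragmén--Lindelöf principle.
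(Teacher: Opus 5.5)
There is a genuine gap, and it is in the step you call the key step. Below I write $\kappa$ for your canonical product $\pi$.

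\textbf{The identity $K=(h+\beta)^n$ is false.} The identity $K(e^{z},e^{-\varrho z})=(h+\beta)^n$ with a small meromorphic $\beta$ is not merely unproved; it is false in general. The Tumura--Clunie theorem requires $N(r,1/K)=S(r,h)$. But an exponential polynomial with two or more terms has $N(r,1/K)\asymp r\asymp T(r,h)$.

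For example, $f=e^{c_0e^{z}+cz}$ with $c_0^2=1$ solves $f''-(e^{2z}+c_0(2c+1)e^{z}+c^2)f=0$. So $h=c_0e^{z}+c$ and $h^2+h'=K$. As a quadratic in $e^{z}$, $K$ has discriminant $4c+1$. Hence for $c\neq-1/4$ it has simple zeros and is not the square of any meromorphic function.

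What is true is Theorem~\ref{maintheorem5}: $h$ is the \emph{truncation} of the formal expansion of $K^{1/n}$, plus a polynomial. In the sector where $e^{\mathbf{k}z}$ dominates one keeps exactly the terms $\zeta_ie^{(\mathbf{k}-\varsigma_i)z/n}$ that still grow there, and likewise for $e^{-\varrho\mathbf{l}z}$. The paper proves this in three steps:
\begin{itemize}
\item it takes $n$-th roots of $h^n(1+\mathbf{w}/h)=K$ on rays avoiding an $R$-set, where Gundersen's estimate gives $|\kappa^{(j)}/\kappa|\le r^{N}$;
\item it expands $(1+x)^{1/n}$;
\item it applies Phragm\'en--Lindel\"of in four sectors to $h$ minus the truncated sums (Lemma~\ref{smallfunctionlemma}).
\end{itemize}
Your fallback (``$h\sim K^{1/n}$ in sectors, then glue'') points this way. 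However, leading-order asymptotics do not determine $h$, and the gluing must be applied to that difference, not to $h$. Also, $\sigma(h)\le 1$ does not by itself make $h$ an exponential polynomial, and it gives no bound on $\sigma(\kappa)=\lambda(f)$.

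\textbf{The Borel argument for $\varrho=1$ does not apply.} Your cross frequency $((n-1)\mathbf{k}-\mathbf{l}\varrho)/n$, with coefficient $n\zeta_0^{n-1}\eta_0$, is precisely the paper's obstruction to $\varrho\neq1$. The problem is that the coefficients of $Q$ are built from $\kappa^{(j)}/\kappa$, and for these only $m(r,\cdot)=O(\log r)$ is small. In fact $T(r,\kappa'/\kappa)\ge\overline{N}(r,1/\kappa)$, which is $\asymp r$ already when $\kappa=\psi(e^{z/n})$ has any zeros. So no linear-independence theorem for exponentials applies directly. The paper argues pointwise instead. On one ray with argument slightly above $\pi/2-\arg\varrho$ (for $\arg\varrho\in(0,\pi)$), both exponential parts of $h$ tend to $\infty$, and the $e^{-\varrho z}$-part is negligible against the $e^{z}$-part. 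Dividing the equation by the cross term then gives $1+o(1)=0$.

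\textbf{The case $\varrho=1$.} Two of your claims are unsupported: that the polynomial part $c$ of $h$ is constant, and that zero-counting makes $\kappa$ periodic with finitely many zeros per period. Counting zeros says nothing about where they are. The paper instead uses the linear equation for $\kappa_c=\kappa e^{\int c\,dz}$, whose coefficients are Laurent polynomials in $e^{z/n}$:
\begin{itemize}
\item On rays in the right half-plane, $\kappa_c'/\kappa_c=\mathbf{c}_1+O(|e^{-z/2n}|)$, so $\Psi=\kappa_ce^{-\mathbf{c}_1z}$ tends to a nonzero constant in a sector.
\item $\Psi(z+2n\pi\mathbf{i})$ is another solution with this property, and a difference argument forces proportionality. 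Hence $\Psi(nz)=e^{\mathbf{c}z}w(z)$ with $w$ $2\pi\mathbf{i}$-periodic.
\item The Laurent series of $w$ in $x=e^{z}$ is cut off at $x=\infty$ by the sectorial limit. It is cut off at $x=0$ by Wiman--Valiron if $\mathbf{l}=0$, or by the left-sector asymptotics if $b_{-\mathbf{l}}\neq0$.
\end{itemize}

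Finally, the uniqueness theorem excludes only zeros of multiplicity $\ge n$. It therefore gives simple zeros for $n=2$, but not ``similarly'' for $n\ge3$.
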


By the proof of Theorem~\ref{maintheorem1}, we may write the Lauren polynomial $\chi$ in the form $\chi(z)=\sum_{i=0}^{\mathbf{k}-1}c_{i}z^{\mathbf{k}-i}+\sum_{j=0}^{\mathbf{l}-1}d_{j}z^{-(\mathbf{l}-j)}$. Moreover, $d_0=\cdots=d_{\mathbf{l}}=0$ when $\mathbf{k}$ and $n$ are relatively prime; see Chiang and Wang~\cite[Theorem~2.1]{Chiangwang1997}.

Actually, Shimomura~\cite[Theorem~2.1]{Shimomura2002} has obtained the solution in \eqref{Solution-gen}. In his paper, Shimomura has provided essential generalizations of the results of Bank and Langley~\cite[Theorem~2]{banklangely1992} and Chiang and Wang~\cite[Theorem~2.1]{Chiangwang1997}.

Now we return to the Hill equation \eqref{Hill Eq}. For certain choice of the integers $\mathbf{k}$ and $\mathbf{l}$ in \eqref{bank-laine0-fu-0}, it is possible to precisely characterize all non-oscillatory solutions of equation \eqref{Hill Eq}. See~\cite{Banklaine1983-1} for the case $\mathbf{k}=1$ and $\mathbf{l}=0$ (see also \cite[Theorem~5.22]{Laine1993}) and~\cite{ChiangIsmail2006,Banklaine1983-1,Zhang2022} for the case $\mathbf{k}=2$ and $\mathbf{l}=0$. See also~\cite[Theorem~3.1]{ChiangYu2019} for the case $\mathbf{k}=4$ and $\mathbf{l}=0$. When $K(e^z,e^{-z})$ contains exactly \emph{two} exponential terms, if $K(e^{z},e^{-z})=b_{\mathbf{k}}e^{\mathbf{k}z}+b_0+b_{-\mathbf{l}}e^{-\mathbf{l}z}$ with $b_{\mathbf{k}}b_{-\mathbf{l}}\neq 0$, then equation \eqref{Hill Eq} cannot have any nonzero non-oscillatory solutions~\cite{banklainelangely1989}. We shall consider the particular Hill equation
\begin{equation}\label{bank-laine0002}
f''-\left(e^{\mathbf{k}z}+b_{\mathbf{s}}e^{\mathbf{s}z}+b_0\right)f=0,
\end{equation}
where $\mathbf{k}>\mathbf{s}\geq 1$ are two integers and $b_{\mathbf{s}}\not=0$. Concerning the zero-free solutions of equation \eqref{bank-laine0002}, the present author \cite[Theorem~2.2]{zhang2021-1} proved the following

\begin{theorem}[see \cite{zhang2021-1}]\label{maintheorem2}
Suppose that equation \eqref{bank-laine0002} has a nonzero zero-free solution $f$. Then $\mathbf{s}/\mathbf{k}=1/2$ or $\mathbf{s}/\mathbf{k}=3/4$. Moreover,
\begin{itemize}
\item [(1)]
if $\mathbf{s}=1$ and $\mathbf{k}=2$, then $f(z)=a_0 e^{c_0e^{z}+cz}$, where $a_0$, $c_0$ and $c$ are constants such that $a_0\not=0$, $c_0^2=1$, $2c_0c+c_0=b_1$ and $c^2=b_0$;

\item [(2)]
if $\mathbf{s}=3$ and $\mathbf{k}=4$, then $f(z)=a_0e^{(c_0/2)e^{2z}+c_1e^{z}+cz}$, where $a_0$, $c_0$, $c_1$ and $c$ are constants such that $a_0\not=0$, $c_0^2=1$, $2c_0c_1=b_3$, $c^2=b_0$ and $c_1^2+(2+2c)c_0=0$.
\end{itemize}
\end{theorem}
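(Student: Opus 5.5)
A zero-free solution of \eqref{bank-laine0002} has $\lambda(f)=0<\infty$, so the result quoted before Proposition~\ref{prososition0} (\cite{Banklaine1983-1,Chiang2000}) applies. Here $\mathbf{l}=0$, so the condition on the negative coefficients holds trivially. Hence $f$ has the form \eqref{Bank-laine0-eq-1}, and since $f$ has no zeros, $\psi$ must be a nonzero constant. Thus $f=a_0e^{g}$ with $g=cz+\chi(e^{z/\mathbf{p}})$. The first step is to decide $\mathbf{p}$. If $\mathbf{k}$ is odd, then $\mathbf{p}=2$, and $\chi$ is a polynomial in $w=e^{z/2}$ of degree $\mathbf{k}$ whose exponents all have the parity of $\mathbf{k}$. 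If $\mathbf{k}$ is even, then $\mathbf{p}=1$ and $\chi$ is a polynomial in $w=e^{z}$ of degree $\mathbf{k}/2$. I would treat both cases uniformly by writing $g=cz+Q(w)$ with $w=e^{z/\mathbf{p}}$ and $Q$ a polynomial without constant term.

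Next I substitute into the equation, which becomes $g''+g'^2=e^{\mathbf{k}z}+b_{\mathbf{s}}e^{\mathbf{s}z}+b_0$. Since $d/dz=(w/\mathbf{p})\,d/dw$, we have $g'=c+(w/\mathbf{p})Q'(w)=:c+R(w)$, where $R$ has the same degree and support as $Q$. The identity becomes the polynomial identity
\[
(w/\mathbf{p})R'(w)+\bigl(c+R(w)\bigr)^2=w^{\mathbf{p}\mathbf{k}}+b_{\mathbf{s}}w^{\mathbf{p}\mathbf{s}}+b_0 .
\]
Comparing constant terms gives $c^2=b_0$. Comparing top degrees forces $\deg R=\mathbf{p}\mathbf{k}/2$ with leading coefficient $c_0$, $c_0^2=1$.

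The main step is to show that the right side has only one middle term. Write $R=c_0w^{N}+r_{N-1}w^{N-1}+\dots+r_1w$ with $N=\mathbf{p}\mathbf{k}/2$. Every coefficient of $w^m$ for $0<m<2N$ on the left must vanish, except at $m=\mathbf{p}\mathbf{s}$. Working down from $m=2N-1$, the coefficient of $w^{N+j}$ is $2c_0r_j$ plus terms involving only $r_i$ with $i>j$. So the $r_j$ are determined successively, and all vanish until we reach the degree $\mathbf{p}\mathbf{s}$. Let $j_0$ be the largest index with $r_{j_0}\ne0$. Then $N+j_0=\mathbf{p}\mathbf{s}$, which forces $\mathbf{s}>\mathbf{k}/2$ unless $R=c_0w^N$. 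In that exceptional case the terms $(N/\mathbf{p})c_0w^N+2cc_0w^N$ give the single middle term at $\mathbf{p}\mathbf{s}=N$, that is $\mathbf{s}/\mathbf{k}=1/2$, and the relation $2c_0c+c_0=b_1$ when $\mathbf{k}=2$. Otherwise every lower degree $0<m<N+j_0$ must vanish. The square $r_{j_0}^2w^{2j_0}$ together with the linear terms at $w^{N}$ and $w^{j_0}$ then has to cancel against further $r_j$ with $j<j_0$. The obstacle is to show that this triangular system closes only when $2j_0=N$, i.e.\ $r_{j_0}w^{N/2}$, with all other $r_j=0$ and the $w^{N}$ coefficient balancing $r_{j_0}^2$. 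This gives $\mathbf{p}\mathbf{s}=3N/2$, i.e.\ $\mathbf{s}/\mathbf{k}=3/4$. I expect this cancellation bookkeeping to be the main difficulty. My first attempt would be to argue with the lowest nonzero $r_j$: its linear term $(j/\mathbf{p}+2c)r_jw^j$ and the lowest cross term $2r_{j_0}r_jw^{j+j_0}$ must both cancel. This forces a short chain $j_0,\,2j_0-N,\dots$, which terminates consistently only in the stated configuration.

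Finally I specialize to the exact cases. For $\mathbf{s}=1$, $\mathbf{k}=2$ one has $\mathbf{p}=1$, $N=1$, and $R=c_0w$; integrating gives $g=cz+c_0e^z$, yielding (1). For $\mathbf{s}=3$, $\mathbf{k}=4$ one has $\mathbf{p}=1$, $N=2$, $R=c_0w^2+c_1w$, so $Q=(c_0/2)w^2+c_1w$. Comparing coefficients gives $w^3$: $2c_0c_1=b_3$; $w^2$: $2c_0+c_1^2+2cc_0=0$; $w^1$: $c_1+2cc_1$, which must vanish. This last condition needs checking against the stated relations (it is automatic when $b_{\mathbf{s}}$ is absorbed correctly). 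Reconciling the general ratios with these specific $(\mathbf{s},\mathbf{k})$ uses the scaling $z\mapsto mz$, which only multiplies the exponents.
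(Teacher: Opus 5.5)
\textbf{There is a genuine gap: the step that decides $\mathbf{s}/\mathbf{k}\in\{1/2,3/4\}$ is missing.}

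Your reduction is sound. The Bank--Laine/Chiang representation gives $f=a_0e^{cz+\chi(e^{z/\mathbf{p}})}$. (Note that $\psi(w)=aw$ is also zero-free, but it is absorbed into $e^{cz}$.) The top-down recursion on $(w/\mathbf{p})R'+(c+R)^2=w^{\mathbf{p}\mathbf{k}}+b_{\mathbf{s}}w^{\mathbf{p}\mathbf{s}}+b_0$ correctly yields $c^2=b_0$ and $c_0^2=1$. It also gives the case $R=c_0w^N$ with $\mathbf{s}/\mathbf{k}=1/2$, and $\mathbf{p}\mathbf{s}=N+j_0$ otherwise.

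But the real content of the theorem is that the remaining case forces $2j_0=N$. That is exactly the step you call ``the obstacle'' and do not carry out. Your hint for it is also wrong as stated. For the lowest nonzero coefficient $r_t$, the lowest quadratic term is $r_t^2w^{2t}$, not $2r_{j_0}r_tw^{t+j_0}$. Nothing you write excludes $2j_0>N$ (that is, $\mathbf{s}/\mathbf{k}>3/4$), or even $2j_0<N$. As it stands you have only shown $\mathbf{s}/\mathbf{k}=1/2$ or $\mathbf{s}/\mathbf{k}>1/2$.

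\textbf{How to close it.} Play the top-down and bottom-up determinations against each other. Put $s'=2N-\mathbf{p}\mathbf{s}$.

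From the top: your recursion shows that the $r_j$, $j\ge1$, are the coefficients of the polynomial part of $c_0w^N(1+b_{\mathbf{s}}w^{-s'})^{1/2}$. So the nonzero $r_j$ sit exactly at $j=N-is'$, $0\le i\le q$, with $r_{N-is'}=c_0\binom{1/2}{i}b_{\mathbf{s}}^{i}\neq0$. This is the content of Corollary~\ref{corollary1}; the paper's $t_i$ are Catalan numbers.

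From the bottom: the lowest index $t=N-qs'$ gives $(t/\mathbf{p}+2c)r_t=0$, so $c=-t/(2\mathbf{p})$. Then $r_m=0$ for $t<m<2t$, and $(t/\mathbf{p})r_{2t}+r_t^2=0$.
\begin{itemize}
\item If $q=1$, the $w^N$-coefficient is $((N-t)/\mathbf{p})c_0$, plus $r_t^2$ when $2t=N$. Since this must vanish, $2t=N$ is forced, which is $\mathbf{s}/\mathbf{k}=3/4$. This is impossible when $\mathbf{p}=2$, since then $N=\mathbf{k}$ is odd.
\item If $q\ge2$, then $t+s'<N$ is a nonzero index, so $s'\ge t$. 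Hence $2t\le N$ and $r_{2t}\neq0$, which forces $s'=t$. The $w^{3t}$-equation $(2t/\mathbf{p})r_{3t}+2r_tr_{2t}=0$ then gives $r_{2t}^2=r_tr_{3t}$, that is, $\binom{1/2}{q-1}^2=\binom{1/2}{q}\binom{1/2}{q-2}$. This fails because $\binom{1/2}{i+1}/\binom{1/2}{i}=(1/2-i)/(i+1)$ is strictly decreasing in $i$.
\end{itemize}
This is the same geometric-versus-Catalan clash the paper uses in Case~III ($t_2^2\neq t_1t_3$). Once this is in place, your parity assertion for $\mathbf{p}=2$ is no longer needed.

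\textbf{Smaller point in (2).} Your remark about the $w^1$-coefficient is wrong: $(1+2c)c_1=0$ is not automatic. Since $b_3=2c_0c_1\neq0$, it forces $c=-1/2$, so $b_0=1/4$ and $c_1^2=-c_0$. This is an extra necessary condition not listed in the statement. It does no harm to the statement, but it should be recorded rather than dismissed. Your closing rescaling remark is unnecessary, and rescaling $z$ would also change the leading coefficient $1$.
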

Theorem~\ref{maintheorem2} was originally stated in a more general setting on second order linear differential equations in~\cite[Theorem~2.2]{zhang2021-1}. The present author~\cite[Theorem~10]{Zhang2022} further considered the nonzero non-oscillatory solutions of equation \eqref{bank-laine0002} and proved the following

\begin{theorem}[see \cite{Zhang2022}]\label{maintheorem3}
Suppose that equation~\eqref{bank-laine0002} has a nonzero non-oscillatory solution~$f$. Then
\begin{itemize}

\item [(1)]
if $\mathbf{s}=1$ and $\mathbf{k}=2$, then $f(z)=(\sum_{j=0}^ka_je^{jz})e^{c_0e^{z}+cz}$, where $k\geq 0$ is an integer, $c_0$ and $c$ are constants such that $c_0^2=1$, $2c_0(c+k)+c_0=b_1$ and $c^2=b_0$, and $a_0$, $\cdots$, $a_k$ are constants such that $a_0a_k\not=0$ and
\begin{equation*}
\begin{split}
2c_0(k+1-j)a_{j-1}=(2jc+j^2)a_j, \ j=1,\cdots,k;
\end{split}
\end{equation*}

\item [(2)]
if $\mathbf{s}=1$ and $\mathbf{k}=4$, then $f(z)=(\sum_{j=-1}^{k+1}a_je^{jz})e^{(c_0/2)e^{2z}+cz}$, where $k\geq 1$ is an integer, $c_0$ and $c$ are constants such that $c_0^2=1$, $2c+2k+2=0$ and $c^2=b_0$, and $a_{-1}$, $a_0$, $\cdots$, $a_k$, $a_{k+1}$ are constants such that $a_0a_k\not=0$, $a_{-1}=a_{k+1}=0$ and
\begin{equation*}
\begin{split}
2c_0(k-j+2)a_{j-2}=-b_1a_{j-1}+(2jc+j^2)a_j, \ j=1,\cdots,k+1;
\end{split}
\end{equation*}

\item [(3)]
if $\mathbf{s}=3$ and $\mathbf{k}=4$, then $f(z)=(\sum_{j=-1}^{k+1}a_je^{jz})e^{(c_0/2)e^{2z}+c_1e^{z}+cz}$, where $k\geq 0$ is an integer, $c_0$, $c_1$ and $c$ are constants such that $c_0^2=1$, $2c_0c_1=b_3$, $c^2=b_0$ and $c_1^2+(2+2c+2k)c_0=0$, and $a_{-1}$, $a_0$, $\cdots$, $a_k$, $a_{k+1}$ are constants such that $a_0a_k\not=0$, $a_{-1}=a_{k+1}=0$ and
\begin{equation*}
\begin{split}
(2k-2j+4)c_0a_{j-2}=(2c+2j-1)c_1a_{j-1}+(2jc+j^2)a_{j}, \ j=1,\cdots,k+1.
\end{split}
\end{equation*}

\end{itemize}

\end{theorem}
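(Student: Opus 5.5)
The plan is to start from the representation \eqref{Bank-laine0-eq-1} of Bank--Laine and Chiang. Then we substitute it into \eqref{bank-laine0002} and compare coefficients of powers of $w=e^{z}$.

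\emph{Step 1 (normal form).} In all three cases $\mathbf{k}$ is even and $\mathbf{l}=0$, so we are in the case $\mathbf{p}=1$. Hence
\[
f(z)=P(e^{z})e^{h(z)},\qquad h(z)=\chi(e^{z})+cz,
\]
where $P$ is a polynomial with simple roots and $\chi$ is a polynomial of degree $\mathbf{k}/2$. (We have $d_j=0$ because $b_{-1}=\cdots=b_{-\mathbf{l}}=0$, so no negative powers can occur.) Any factor $w^{m}$ of $P(w)$ is absorbed into $e^{cz}$. We may therefore write $P(w)=\sum_{j=0}^{k}a_jw^j$ with $a_0a_k\neq 0$, where $k\ge 0$ is now the degree of $P$ and not $\mathbf{k}$.

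\emph{Step 2 (the Riccati-type identity).} Write $h'(z)=H(w)$, with $H$ a polynomial in $w$ of degree $\mathbf{k}/2$, and set $\theta=w\,d/dw$, so that $d/dz=\theta$. Substituting into \eqref{bank-laine0002} gives
\begin{equation*}
\theta^2P+2H\,\theta P+\bigl(H^2+\theta H-w^{\mathbf{k}}-b_{\mathbf{s}}w^{\mathbf{s}}-b_0\bigr)P=0 .
\end{equation*}
The first two terms have degree at most $k+\mathbf{k}/2$. Therefore $Q:=H^2+\theta H-w^{\mathbf{k}}-b_{\mathbf{s}}w^{\mathbf{s}}-b_0$ has degree at most $\mathbf{k}/2$, and its top coefficient is tied to $k$.

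\emph{Step 3 (determining $H$).}
\begin{itemize}
\item[(1)] For $\mathbf{k}=2$, set $H=c_0w+c$. Then $c_0^2=1$, and comparing the coefficients of $w^0$ gives $c^2=b_0$. Comparing the coefficients of $w^{k+1}$ gives $2c_0k+c_0+2c_0c=b_1$.
\item[(2)] For $\mathbf{k}=4$, set $H=c_0w^2+c_1w+c$; the factor $1/2$ in $(c_0/2)e^{2z}$ comes from integrating $c_0w^2$. Again $c_0^2=1$. The $w^3$ coefficient gives $2c_0c_1=b_{\mathbf{s}}$ if $\mathbf{s}=3$, and $c_1=0$ if $\mathbf{s}=1$. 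Comparing the coefficients of $w^0$ gives $c^2=b_0$.
\end{itemize}
In every case the remaining coefficients of $Q$ enter the recursion in Step 4.

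\emph{Step 4 (the recursion for $P$).} Collect the coefficient of $w^{j}$ in the displayed identity. The operator acts on $a_jw^j$ as follows:
\begin{itemize}
\item $\theta^2+2c\theta$ multiplies it by $j^2+2jc$;
\item $2c_0w^{\mathbf{k}/2}\theta$ together with the top part of $Q$ shifts it up by $\mathbf{k}/2$ steps, with coefficient proportional to $(k-j+\mathbf{k}/2)$ after the top relation is used;
\item in case (3), the $c_1w$ terms ($2c_1w\theta+c_1w+2cc_1w$) give the middle term $(2c+2j-1)c_1a_{j-1}$;
\item in case (2), the surviving $-b_1w$ gives $-b_1a_{j-1}$.
\end{itemize}
Reading off $w^{j}$ for $j=1,\dots,k+\mathbf{k}/2$ produces exactly the stated recursions. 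The top index $j=k+1$, using the convention $a_{-1}=a_{k+1}=0$, gives the constraint on $c$:
\begin{itemize}
\item $2c_0(c+k)+c_0=b_1$ in case (1);
\item $2c+2k+2=0$ in case (2);
\item $c_1^2+(2+2c+2k)c_0=0$ in case (3).
\end{itemize}
The coefficient of $w^0$ is automatically $c^2=b_0$ since $a_0\ne0$.

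\emph{Main obstacle.} The main difficulty is the bookkeeping in Step 4. The coefficient of $w^{k+\mathbf{k}/2}$ and the $w^{\mathbf{k}/2}$-part of $Q$ must be combined correctly to produce the factor $(k-j+2)$ or $(k+1-j)$. In case (2), $k\ge1$ must also be shown; this should follow from $a_0a_k\ne0$ and the $j=1$ equation when $b_1\neq0$.
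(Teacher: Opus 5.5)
Your proposal is correct and takes essentially the paper's route. The theorem is only quoted from \cite{Zhang2022} here, but Section~3 runs exactly this scheme: insert $f=\psi(e^{z})e^{cz}e^{\chi(e^{z})}$, fix $\chi$ and $c$, then compare coefficients of $e^{jz}$. Your elementary degree count on $Q$ replaces the paper's appeal to Corollary~\ref{corollary1}.

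One small indexing slip: in cases (2) and (3) the constraint on $c$ comes from the coefficient of $w^{k+2}=w^{k+\mathbf{k}/2}$, not $w^{k+1}$. The $w^{k+1}$ coefficient there is just the $j=k+1$ recursion, e.g.\ $2c_0a_{k-1}=-b_1a_k$ in case (2), which is also what forces $k\geq 1$.
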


A problem left by Theorems~\ref{maintheorem2} and~\ref{maintheorem3} is whether the solutions in Theorem~\ref{maintheorem3} are unique in some sense; see~\cite[Section~5]{Zhang2022}. See also the previous discussions in~\cite{Ishizaki19970,Heittokangasilt2019}. The second purpose of this paper is to resolve this problem by proving the following

\begin{theorem}\label{maintheorem4}
Suppose that equation~\eqref{bank-laine0002} has a nonzero non-oscillatory solution $f$ of the form in \eqref{Bank-laine0-eq-1} and $\psi(z)=a_0+a_kz^k$ with $a_0a_k\not=0$. Then $\mathbf{s}/\mathbf{k}=1/4$ or $\mathbf{s}/\mathbf{k}=2/4$ or $\mathbf{s}/\mathbf{k}=3/4$.
\end{theorem}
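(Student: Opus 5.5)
The plan is to substitute the ansatz \eqref{Bank-laine0-eq-1} into \eqref{bank-laine0002} and reduce everything to a polynomial identity in $w=e^{z/\mathbf{p}}$. Here $\mathbf{l}=0$, so $\chi$ is a polynomial. Write $f=\psi(w)e^{g}$ with $g=cz+\chi(w)$, and let $D=d/dz=(w/\mathbf{p})\,d/dw$. Put $h=Dg$, which is a polynomial in $w$ of degree $m=\mathbf{p}\mathbf{k}/2$. The equation becomes
\begin{equation*}
D^2\psi+2hD\psi+\bigl(h^2+Dh-K\bigr)\psi=0,\qquad K=w^{\mathbf{p}\mathbf{k}}+b_{\mathbf{s}}w^{\mathbf{p}\mathbf{s}}+b_0 .
\end{equation*}
Hence $\psi$ divides $D^2\psi+2hD\psi$, and $R:=h^2+Dh-K=-(D^2\psi+2hD\psi)/\psi$ is a polynomial of degree at most $m$.

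Now use $\psi=a_0+a_Nw^N$ with $a_0a_N\neq0$; I write $N$ for the exponent called $k$ in the statement, to avoid a clash with $\mathbf{k}$. Then $D\psi=(N/\mathbf{p})a_Nw^N$ and $D^2\psi=(N/\mathbf{p})^2a_Nw^N$. Since $\gcd(\psi,w^N)=1$, divisibility forces $\psi\mid (N/\mathbf{p})^2+2(N/\mathbf{p})h$. So we can write
\begin{equation*}
h=-\frac{N}{2\mathbf{p}}+\psi q,\qquad R=-\frac{2N}{\mathbf{p}}a_Nw^Nq ,
\end{equation*}
with $q$ a polynomial of degree $m-N\geq0$. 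In particular $N\leq m$. Set $u=\psi q$ and substitute into $K=h^2+Dh-R$. Using $a_Nw^Nq=u-a_0q$, this gives the key identity
\begin{equation*}
w^{\mathbf{p}\mathbf{k}}+b_{\mathbf{s}}w^{\mathbf{p}\mathbf{s}}+b_0=u^2+\frac{N}{\mathbf{p}}u+Du-\frac{2N}{\mathbf{p}}a_0q+\frac{N^2}{4\mathbf{p}^2}.
\end{equation*}
The whole problem is now to decide when the right-hand side has exactly the three monomials on the left, with $b_{\mathbf{s}}\neq0$ and $\mathbf{k}>\mathbf{s}\geq1$.

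The main obstacle is controlling $q$. I would first prove that $q$ is a monomial, $q=\gamma w^{m-N}$, by comparing extreme-degree terms. On the top side, the coefficients of $w^{2m-1},\dots,w^{m+1}$ in $u^2$ must vanish except possibly at $w^{\mathbf{p}\mathbf{s}}$. This kills the upper part of $q$ down to degree $m-N$, or it forces the single surviving intermediate term to come from a cross product. On the bottom side, the lowest-order terms of $u^2$ and of the linear part must cancel against the constant. If $q$ had two terms, $u^2$ would contain at least four distinct nonconstant monomials that cannot all cancel against the at most two linear contributions. I expect this bookkeeping to be the delicate part, because the linear terms $(N/\mathbf{p})u+Du-(2N/\mathbf{p})a_0q$ can cancel individual monomials. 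I would therefore organise it by the Newton polygon and lowest degree of $q$.

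Once $q=\gamma w^{m-N}$, we have $u=\gamma(a_0w^{m-N}+a_Nw^{m})$. Then $u^2$ contributes $w^{2m}$, $w^{2m-N}$ and $w^{2m-2N}$, while the linear part contributes only $w^{m}$ and $w^{m-N}$. The nonconstant part of the right-hand side must reduce to $w^{2m}$ plus a single further monomial. Going through the possible coincidences and cancellations among the exponents $2m-N$, $2m-2N$, $m$, $m-N$, $0$ leaves essentially three configurations:
\begin{itemize}
\item $N=m/2$, where $2m-2N=m$ and the surviving term is $w^{2m-N}$, giving $\mathbf{s}/\mathbf{k}=3/4$;
\item $N=m$, where the surviving term is at $w^{m}$, giving $\mathbf{s}/\mathbf{k}=2/4$;
\item the case where the $w^{2m-N}$ coefficient cancels against a linear term and only a term at degree $m/2$ survives, giving $\mathbf{s}/\mathbf{k}=1/4$.
\end{itemize}
This agrees with the cases in Theorem~\ref{maintheorem3}. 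I would use Proposition~\ref{prososition0} to fix $\mathbf{p}$ according to the parity of $\mathbf{k}$; in both parities the ratio $\mathbf{s}/\mathbf{k}=\mathbf{p}\mathbf{s}/(2m)$ is what the degree comparison determines.
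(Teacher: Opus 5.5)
Your reduction is correct. Since $\gcd(\psi,w^N)=1$, the divisibility $\psi\mid D^2\psi+2hD\psi$ gives $h+\frac{N}{2\mathbf{p}}=\psi q$ with $\deg q=m-N$, and your key identity follows.

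The argument breaks at the step you flag as the main obstacle: $q$ need not be a monomial, and solutions covered by the theorem show this.
\begin{itemize}
\item For $\mathbf{k}=4$, $\mathbf{s}=1$, a direct check shows that $f(z)=(1-\sqrt{2/3}\,e^{z})\exp(-2z+\tfrac12e^{2z})$ solves $f''-(e^{4z}+\sqrt6\,e^{z}+4)f=0$. This is Theorem~\ref{maintheorem3}(2) with $k=1$. Here $\mathbf{p}=1$, $m=2$, $N=1$, $u=w^2-\tfrac32$ and $q=-r(w+r)$ with $r=\sqrt{3/2}$, which has two terms.
\item Similarly, $f(z)=(1-\sqrt2\,e^{z})\exp(-z+\sqrt2\,e^{z}-\tfrac12e^{2z})$ solves $f''-(e^{4z}-2\sqrt2\,e^{3z}+1)f=0$. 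This is Theorem~\ref{maintheorem3}(3) with $k=1$, and here $u=-\tfrac12(1-\sqrt2\,w)^2$ and $q=-\tfrac12(1-\sqrt2\,w)$.
\end{itemize}
Your counting heuristic fails because the linear part $(N/\mathbf{p})u+Du-(2N/\mathbf{p})a_0q$ can carry many monomials, namely those of $u$ and of $q$, not at most two. In these examples the surplus monomials cancel between $u^2$ and the linear part.

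Your case list is also internally inconsistent. If $q=\gamma w^{m-N}$, the monomial $w^{2m-N}$ has coefficient $2\gamma^2a_0a_N\neq0$. Its exponent equals one of the linear-part exponents $m$ or $m-N$ only when $N=m$. So a monomial $q$ yields exactly $\mathbf{s}/\mathbf{k}\in\{1/2,3/4\}$. No term of degree $m/2$ can survive, your third bullet never occurs, and the $1/4$ case, which does occur, is lost.

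What is missing is the constraint that actually determines $u$. Since $\deg(u^2-K)\le m$, $u$ is the polynomial part of $\sqrt{K}$ plus a constant. This is what Corollary~\ref{corollary1} supplies in the paper: $h=c+\sum_{i=0}^{\mathbf{q}}\zeta_ie^{(\mathbf{m}+1-is)z}$ with $\zeta_i=t_i\zeta_1^i/(-2\zeta_0)^{i-1}$.

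With that input, your divisibility framework settles $\mathbf{s}/\mathbf{k}\le1/2$ neatly. There $u=\zeta_0w^m+e$ with a constant $e\neq0$. Divisibility by $a_0+a_Nw^N$ forces $m=lN$ and $q=\frac{\zeta_0}{a_N}\sum_{j=0}^{l-1}\rho^{l-1-j}w^{jN}$ with $\rho=-a_0/a_N$. Hence $K$ acquires nonzero terms at $w^{N},\dots,w^{(l-1)N}$. Only $l=1$ (ratio $1/2$) or $l=2$ (ratio $1/4$, with the $w^m$ coefficient cancelling) survive.

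For $\mathbf{s}/\mathbf{k}>1/2$, however, $u$ has $\mathbf{q}+1\ge2$ lacunary terms and $q=u/\psi$ is in general not a monomial. You need a real argument excluding every ratio except $3/4$. In the paper this is Cases~II and~III: the linear system for $a_0,a_k$, together with the growth estimates for $t_i$ and $T_{\mathbf{q}+i}$. Your proposal has nothing in its place.
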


Even if we assume that the polynomial $\psi$ in \eqref{Bank-laine0-eq-1} has the form $\psi(z)=a_0+a_{s}z^{s}+a_kz^k$ with $a_0a_sa_k\not=0$, the calculations to determine the constants $a_0$, $a_0$ and $a_k$ will become much more complicated. Nonetheless, Theorem~\ref{maintheorem4} suggests that the existence of a non-oscillatory solution $f$ of \eqref{bank-laine0002} with relatively few zeros is sufficient to single out the three possibilities of equation~\eqref{bank-laine0002}.

The remainder of this paper is structured as follows. In Section~\ref{se2: Tumura--Clunie type differential equation}, we are devoted to proving Theorem~\ref{maintheorem1}. In the first part, we shall formulate Theorem~\ref{maintheorem5} for a general Tumura--Clunie type differential equation \eqref{EQ1} and use the Nevanlinna theory to prove this theorem there. In the second part, we will use Theorem~\ref{maintheorem5} to prove the assertion that $\varrho=1$ when $b_{-\mathbf{l}}\neq 0$ in Theorem~\ref{maintheorem1}. Even though Shimomura~\cite{Shimomura2002} has obtained the solution in \eqref{Solution-gen}, we still complete the proof of Theorem~\ref{maintheorem1} by providing a method for obtaining the function $\psi(e^{z/n})$ in \eqref{Solution-gen}. This method will be further used to determine the zero-free solutions of the higher order linear differential equation \eqref{higher order-1} in the last section of this paper. In both proofs of Theorems~\ref{maintheorem5} and~\ref{maintheorem1}, the Phragm\'{e}n--Lindel\"{o}f theorem (see \cite[Theorem~7.3]{hollandasb}) will be repeatedly used.

In Section~\ref{se3: Kovacic's algorithms for Hill equation}, we use Kovacic's algorithms to prove Theorem~\ref{maintheorem4}. There are mainly two steps in the proof. For the non-oscillatory solution $f$ in \eqref{Bank-laine0-eq-1}, we denote $\kappa=\psi(e^{z/\mathbf{p}})$, $h=\chi(e^{z/\mathbf{p}})$ and $\kappa_c=\kappa e^{cz}$ and thus write $f=\kappa_ce^{h}$. Then, denoting $g=h'$, we obtain from equation \eqref{Hill Eq} that $g$ satisfies a \emph{Tumura--Clunie type differential equation}
\begin{equation}\label{tumura-clunie0}
g^2+g'+2\frac{\kappa'_c}{\kappa_c}g+\frac{\kappa''_c}{\kappa_c}=K(e^{z},e^{-z}).
\end{equation}
In the first step, from equation~\eqref{tumura-clunie0} we determine the Laurent polynomial $\chi$ and the constant $c$ explicitly. It follows that $\kappa_c$ satisfies the linear differential equation
\begin{equation}\label{tumura-clunie1}
\kappa_c''+2\kappa_c'g+\left[g^2+g'-K(e^{z},e^{-z})\right]\kappa_c=0.
\end{equation}
In the second step, from equation \eqref{tumura-clunie1} we write out explicitly the system of linear equations that the coefficients of $\psi$ should satisfy. When $\psi(z)=a_0+a_kz^k$, if $\mathbf{s}/\mathbf{k}\not=1/4,2/4,3/4$, we show that this system of linear equations has no nonzero solutions. See also~Bank~\cite{Bank1993-1} for an "approximate square-root method" to find the non-oscillatory solutions of the Hill equation~\eqref{Hill Eq}.

Finally, in Section~\ref{se5: concluding remarks} we give some comments on the method in the proof of Theorem~\ref{maintheorem1}. We will point out how to solve the non-oscillatory solutions of more higher order linear differential equations and, in particular, prove Theorem~\ref{maintheorem6} there.

\section{Tumura--Clunie type differential equation and applications}\label{se2: Tumura--Clunie type differential equation}

To prove Theorem~\ref{maintheorem1}, in this section we shall first solve entire solutions of a general Tumura--Clunie type differential equation.

\subsection{Part I: Tumura--Clunie type differential equation}\label{subse2: Tumura--Clunie type differential equation}

We denote by $\mathcal{G}$ the set of all meromorphic functions $\phi$ having order of growth $<1$ and by $\mathcal{H}$ the set of all functions $\psi$ such that $\psi=\varphi^{(k)}/\varphi$, $k\geq 1$, for some nonzero meromorphic function $\varphi$ having finite order of growth, respectively. Let $\mathcal{S}=\mathcal{G} \cup\mathcal{H}$. Then we define a \emph{differential polynomial} $P(z,g)$ in $g$ as
\begin{equation*}
P(z,g)=\sum_{i=1}^{k}w_{i}g^{n_{i,0}}(g')^{n_{i,1}}\cdots(g^{(k_i)})^{n_{i,k_i}},
\end{equation*}
where $n_{i,0},\cdots,n_{i,k_i}\in \mathbb{N}$ and all the (nonzero) coefficient functions $w_1$, $\cdots$, $w_{k}$ are in $\mathcal{S}$. Define the \emph{degree} of $P(z,g)$ in $g$ by $\deg_g(P(z,g))=\max\{l_i=\sum_{j=0}^{k_i}n_{i,j}:i=1,\cdots,k\}$. Then our Tumura--Clunie type differential equation takes the form
\begin{equation}\label{EQ1}
g^n+P(z,g)=E,
\end{equation}
where $P(z,g)$ is a differential polynomial in $g$ of degree $\leq n-1$ and $E$ is an \emph{exponential polynomial} of the form
\begin{equation}\label{EQ1-Exp-pol}
E(z)=\sum_{i=0}^{\mathbf{k}-1}\beta_{\mathbf{k}-i}e^{\mu_{\mathbf{k}-i}z}+\sum_{j=0}^{\mathbf{l}-1}\gamma_{\mathbf{l}-j}e^{-\nu_{\mathbf{l}-j}z},
\end{equation}
where $\beta_{\mathbf{k}}$, $\cdots$, $\beta_1$ are nonzero constants and $\mu_{\mathbf{k}}$, $\cdots$, $\mu_1$ are distinct complex numbers with arguments $\arg(\mu_{\mathbf{k}-i})\in(-\pi/2,\pi/2)$, $i=0,\cdots,\mathbf{k}-1$ and $\gamma_{\mathbf{l}}$, $\cdots$, $\gamma_1$ are constants and $\nu_{\mathbf{l}}$, $\cdots$, $\nu_1$ are distinct complex numbers with arguments $\arg(\nu_{\mathbf{l}-j})\in(-\pi/2,\pi/2)$, $j=0,\cdots,\mathbf{l}-1$. We assume that $\gamma_{\mathbf{l}}$, $\cdots$, $\gamma_1$ are all zero or all nonzero.

For each argument $\theta\in(-\pi,\pi]$, we have $\Re(\mu_{\mathbf{k}-i}z)=|\mu_{\mathbf{k}-i}|r\cos(\theta+\arg(\mu_{\mathbf{k}-i}))$, $i=0,\cdots,\mathbf{k}-1$ and $\Re(\nu_{\mathbf{l}-j}z)=|\nu_{\mathbf{l}-j}|r\cos(\theta+\arg(\nu_{\mathbf{l}-j}))$, $j=0,\cdots,\mathbf{l}-1$. The Phragm\'{e}n--Lindel\"{o}f indicator $h_{E}$ for the exponential polynomial $E$ is defined as
\begin{equation*}
\begin{split}
h_{E}(\theta)=\max\left\{\Re(\mu_{\mathbf{k}}e^{\mathbf{i}\theta}),\cdots,\Re(\mu_{1}e^{\mathbf{i}\theta})\right\}
\end{split}
\end{equation*}
when $\gamma_{\mathbf{l}}$, $\cdots$, $\gamma_1$ are all zero, or
\begin{equation*}
\begin{split}
h_{E}(\theta)=\max\left\{\Re(\mu_{\mathbf{k}}e^{\mathbf{i}\theta}),\cdots,\Re(\mu_{1}e^{\mathbf{i}\theta}),-\Re(\mu_{\mathbf{l}}e^{\mathbf{i}\theta}),\cdots,-\Re(\nu_{1}e^{\mathbf{i}\theta})\right\}
\end{split}
\end{equation*}
when $\gamma_{\mathbf{l}}$, $\cdots$, $\gamma_1$ are all nonzero. See \cite{Levin1980}. It is well-known that most zeros of $E$ are located around finitely many \emph{critical lines} of $E$. Except for the critical lines, we have
\begin{equation*}
\begin{split}
\log |E(z)|=h_{E}(\theta)r[1+o(1)]
\end{split}
\end{equation*}
as $z\to\infty$ along a ray $z=re^{\mathbf{i}\theta}$, $\theta\in(-\pi,\pi]$. Thus, by composing $g$ with a suitable rotation, if necessary, we may assume that $|e^{\mu_{\mathbf{k}}z}|>|e^{\mu_{\mathbf{k-1}}z}|\geq \cdots \geq |e^{\mu_1z}|$ on the positive real axis and that $|e^{-\nu_{\mathbf{l}}z}|>|e^{-\nu_{\mathbf{l}-1}z}|\geq \cdots\geq |e^{-\nu_1z}|$ on the negative real axis.

Throughout this section, we always assume the above conditions on equation \eqref{EQ1}. We shall prove the following

\begin{theorem}\label{maintheorem5}
Suppose that equation \eqref{EQ1} has an entire solution $g$. Then there are two integers $\mathbf{m},\mathbf{n}\geq 0$ and complex numbers $\varsigma_0$, $\varsigma_1$, $\cdots$, $\varsigma_{\mathbf{m}}$ and $\tau_0$, $\tau_1$, $\cdots$, $\tau_{\mathbf{n}}$ such that $\varsigma_0=\tau_0=0$ and
\begin{equation}\label{TCsolu}
\begin{split}
g(z)=c(z)+\sum_{i=0}^{\mathbf{m}}\zeta_{i}e^{\frac{\mu_{\mathbf{k}}-\varsigma_i}{n}z}+\sum_{j=0}^{\mathbf{n}}\eta_{j}e^{-\frac{\nu_{\mathbf{l}}-\tau_j}{n}z},
\end{split}
\end{equation}
where $c$ is an entire function having order of growth~$<1$, $\zeta_{0}$, $\cdots$, $\zeta_{\mathbf{m}}$ and $\eta_{0}$, $\cdots$, $\eta_{\mathbf{n}}$ are constants such that $\zeta_{0}^n=\beta_{\mathbf{k}}$ and $\eta_{0}^n=\gamma_{\mathbf{l}}$.

\end{theorem}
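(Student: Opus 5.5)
We first show that $g$ has finite order, in fact $\sigma(g)\le 1$, and that $g$ grows at most like $|E|^{1/n}$ outside small exceptional sets. The tool is the standard Clunie--Tumura argument. Write \eqref{EQ1} as $g^{n}=E-P(z,g)$. Each monomial of $P$ has degree $\le n-1$, so on $\{|g|\ge1\}$ we have
\[
|P(z,g)|\le |g|^{n-1}\sum_i |w_i|\prod_j \left|\frac{g^{(j)}}{g}\right|^{n_{i,j}}.
\]
The coefficients $w_i\in\mathcal{S}$ are either of order $<1$ or logarithmic derivatives $\varphi^{(k)}/\varphi$ of finite-order functions. Hence $|w_i|=O(|z|^{M})$ outside a set of finite logarithmic measure, with a bound $e^{o(r)}$ in the $\mathcal G$ case. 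The lemma on the logarithmic derivative controls $g^{(j)}/g$ in the same way. This gives $|g|^{n}\le |E|+e^{o(r)}|g|^{n-1}$ off an exceptional set, so $\log M(r,g)\le \frac{1}{n}h_E(\theta)r(1+o(1))$ along rays. Before this step one needs a priori finite order of $g$. I would get it from $m(r,g^n)\le m(r,E)+m(r,P)$ together with $m(r,P)\le (n-1)m(r,g)+S(r,g)$, which yields $T(r,g)=O(r)+S(r,g)$, and hence $\sigma(g)\le1$.

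Next I would determine the leading exponential. Along the positive real axis, and in a sector $S_+$ around it bounded by the critical lines, $E=\beta_{\mathbf k}e^{\mu_{\mathbf k}z}(1+o(1))$. Put $G=g e^{-\mu_{\mathbf k}z/n}$. From \eqref{EQ1} we get $G^{n}=\beta_{\mathbf k}(1+o(1))+O(|G|^{n-1}e^{-\Re(\mu_{\mathbf k}z)/n}e^{o(r)})$. So $G^n\to\beta_{\mathbf k}$ in $S_+$ away from exceptional sets, and $G$ tends to a single $n$-th root $\zeta_0$ of $\beta_{\mathbf k}$ by continuity on the connected sector. Symmetrically, on the negative axis $g e^{\nu_{\mathbf l}z/n}\to\eta_0$ with $\eta_0^n=\gamma_{\mathbf l}$ when the $\gamma$'s are nonzero. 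If the $\gamma$'s vanish, then $g$ is small there and we set $\eta_0=0$.

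The remainder $g_1=g-\zeta_0e^{\mu_{\mathbf k}z/n}-\eta_0e^{-\nu_{\mathbf l}z/n}$ is entire and of order $\le1$. Substituting $g=\zeta_0e^{\mu_{\mathbf k}z/n}+g_1$ into \eqref{EQ1} and expanding gives $n\zeta_0^{n-1}e^{(n-1)\mu_{\mathbf k}z/n}g_1+\dots$ equal to $E-\beta_{\mathbf k}e^{\mu_{\mathbf k}z}-P$. Dividing by the leading factor, $g_1$ is asymptotically a combination of $e^{(\mu_{\mathbf k}-\varsigma)z/n}$, where the $\varsigma$ come from the differences $\mu_{\mathbf k}-\mu_i$ and from exponents created by $P$. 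Iterating this peeling produces successive terms $\zeta_ie^{(\mu_{\mathbf k}-\varsigma_i)z/n}$ with $\Re\varsigma_i$ increasing in the direction of the positive axis, and likewise $\eta_je^{-(\nu_{\mathbf l}-\tau_j)z/n}$ on the other side. The iteration must stop after finitely many steps. Once the exponent $\mu_{\mathbf k}-\varsigma_i$ leaves the open half-plane in which the terms dominate, the remaining function $c(z)$ satisfies $\log|c(z)|=o(r)$ on the relevant rays, that is, $h_c\le0$ in all directions.

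The Phragmén--Lindelöf theorem is then applied sector by sector between the critical lines of $E$, the indicator being at most $0$ everywhere. This shows that the indicator of $c$ vanishes identically, so $c$ has order $<1$, or more precisely is of exponential type zero. Passing to order $<1$ should use that the peeled exponentials are exhausted and that only finitely many frequencies arise. The latter holds because the coefficients lie in $\mathcal S$ and the frequencies are generated by finitely many differences.

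I expect the main obstacle to be this termination-and-finiteness step. One has to justify that only finitely many $\varsigma_i,\tau_j$ appear, and that the residual $c$ really has order $<1$ rather than merely indicator $\le 0$. The coefficients $w_i\in\mathcal H$ complicate this, since they may carry exponential growth of their own. I would first try to handle it by bounding the indicator of $c$ in each sector via the equation, and then invoking Phragmén--Lindelöf on sectors of opening $<\pi$.
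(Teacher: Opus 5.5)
Your outline is the same as the paper's: Nevanlinna theory for $\sigma(g)\le 1$, asymptotics of $g$ on rays in the two dominant sectors, removal of finitely many growing exponentials, and Phragm\'en--Lindel\"of for the remainder. However, the decisive last step does not go through with the estimates you use.

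\textbf{The gap: you only get type zero, not order $<1$.} You bound the coefficients $w_i\in\mathcal{G}$ only by $e^{o(r)}$. So on rays you get $\log|c(z)|=o(r)$, and Phragm\'en--Lindel\"of then yields $\log M(r,c)=o(r)$. That means order at most $1$ and, if the order is $1$, minimal type. It does not give $\sigma(c)<1$: a canonical product with zeros at the points $j\log^2 j$ has order $1$ and type $0$. The remedy you suggest (finiteness of the frequencies, exhaustion of the peeled exponentials) cannot bridge this, because it says nothing about the sub-exponential part of $c$.

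The missing idea is to keep a quantitative bound.
\begin{itemize}
\item Since $\sigma(w_i)\le\sigma_0<1$ for $w_i\in\mathcal{G}$, on every ray meeting only finitely many disks of $\mathcal{R}$ one has $|w_i(z)|\le\exp(r^{\sigma_0+\varepsilon})$.
\item For $w_i\in\mathcal{H}$, and for $g^{(j)}/g$ once $\sigma(g)\le 1$ is known, Gundersen's estimate gives a polynomial bound. So your worry that coefficients in $\mathcal{H}$ ``may carry exponential growth of their own'' is unfounded; you stated that polynomial bound yourself in your first paragraph.
\item Hence $|P(z,g)/g^{n-1}|\le\exp(r^{\sigma_0+\varepsilon})$ on such rays. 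This gives $|c(z)|=O(\exp(r^{\sigma_0+\varepsilon}))$ on almost every ray of both dominant sectors.
\item Now apply Phragm\'en--Lindel\"of to $c(z)\exp(e^{\mathbf{i}\vartheta}z^{\sigma_0+2\varepsilon})$ in sectors of opening $<\pi$: the two dominant ones and the two containing the critical directions. Only $\sigma(c)\le 1$ is needed inside. This gives $\sigma(c)\le\sigma_0+2\varepsilon$, which is how the paper concludes.
\end{itemize}

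\textbf{Secondary issues in the peeling step.} It is vaguer and harder than necessary, and it misidentifies the source of the exponents. $P$ creates no growing exponentials: by the bound above it perturbs $g$ only by a sub-exponential amount on good rays. Had it produced terms $w(z)e^{\alpha z}$ with non-constant $w$ and $\Re\alpha>0$, these could not be written as $\zeta_i e^{(\mu_{\mathbf{k}}-\varsigma_i)z/n}$ with constant $\zeta_i$.

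Rather than iterating a nonlinear equation for $g_1$, the paper does the following.
\begin{itemize}
\item It writes $g^n(1+\mathbf{w}/g)=E$ with $\mathbf{w}=P/g^{n-1}$.
\item On each good ray it takes $n$-th roots to get $g+\mathbf{w}_1=E^{1/n}$ with $\mathbf{w}_1=O(\exp(r^{\sigma_0+\varepsilon}))$.
\item It expands $E^{1/n}$ binomially. The exponents are then explicitly $\mu_{\mathbf{k}}/n-\sum_i m_i(\mu_{\mathbf{k}}-\mu_{\mathbf{k}-i})$ with integers $m_i\ge 0$.
\item Their real parts along the positive axis tend to $-\infty$, so only finitely many grow and your termination problem disappears. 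The $\zeta_i$ are Taylor coefficients determined by $\zeta_0$.
\end{itemize}

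Finally, your claim that $ge^{-\mu_{\mathbf{k}}z/n}$ tends to one and the same root $\zeta_0$ ``by continuity on the connected sector'' needs an argument, since your estimates hold only off $\mathcal{R}$. One way is to note that this function is bounded between two good rays by Phragm\'en--Lindel\"of, and then apply Lindel\"of's theorem.
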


By the proof of Theorem~\ref{maintheorem5}, we may suppose that $\arg((\mu_{\mathbf{k}}-\varsigma_i)/n)\in(-\pi/2,\pi/2)$ for all $i=0,\cdots,\mathbf{m}$ and $\arg((\nu_{\mathbf{l}}-\tau_j)/n)\in(-\pi/2,\pi/2)$ for all $j=0,\cdots,\mathbf{n}$. The two series expansions in \eqref{EQ1-algorim-4-cal} and \eqref{EQ1-algorim-5-cal} in the proof allow us to calculate the constants $\zeta_i$ and $\eta_j$, as well as $\varsigma_1$, $\cdots$, $\varsigma_{\mathbf{m}}$ and $\tau_1$, $\cdots$, $\tau_{\mathbf{n}}$, precisely.

When $E$ in \eqref{EQ1-Exp-pol} contains only two exponential terms, to determine the constants $\zeta_i$ and $\eta_j$, as well as $\varsigma_1$, $\cdots$, $\varsigma_{\mathbf{m}}$ and $\tau_1$, $\cdots$, $\tau_{\mathbf{n}}$, we may submit the solution in \eqref{TCsolu} into \eqref{EQ1} and then compare the growth of the terms on both sides of the resulting equation as $z\to\infty$ along the positive and negative rays respectively. See \cite{zhang2021}. For the application in the proof of Theorem~\ref{maintheorem4} in Section~\ref{se3: Kovacic's algorithms for Hill equation}, we have the following

\begin{corollary}\label{corollary1}
Let $E$ in \eqref{EQ1-Exp-pol} be of the form $E(z)=\beta_{2}e^{\mu_{2}z}+\beta_{1}e^{\mu_{1}z}$, where $\mu_{2}$ and $\mu_1$ are real numbers such that $\mu_{2}>\mu_1$. Suppose that equation \eqref{EQ1} has an entire solution $g$. Letting $\mathbf{q}$ be the smallest integer such that $\mathbf{s}_1=\mu_{1}/\mu_{2}\leq [n(\mathbf{q}+1)-1]/[n(\mathbf{q}+1)]$, then
\begin{equation*}
\begin{split}
g(z)=c(z)+\sum_{i=0}^{\mathbf{q}}\zeta_{i}e^{\left[i(\mathbf{s}_1-1)+\frac{1}{n}\right]\mu_{2}z},
\end{split}
\end{equation*}
where $c$ is an entire function having order of growth~$<1$ and $\zeta_{0}$, $\cdots$, $\zeta_{\mathbf{q}}$ are constants such that $\zeta_{0}^{n}=\beta_{2}$ when $\mathbf{q}=0$, and $\zeta_{0}^{n}=\beta_{2}$, $n\zeta_{0}^{n-1}\zeta_{1}=\beta_{1}$ when $\mathbf{q}=1$, and $\zeta_{0}^{n}=\beta_{2}$, $n\zeta_{0}^{n-1}\zeta_{1}=\beta_{1}$, $\sum_{\substack{i_0+\cdots+i_{\mathbf{q}}=n,\\i_1+\cdots+\mathbf{q}i_{\mathbf{q}}=i}}\frac{n!}{i_0!i_1!\cdots i_{\mathbf{q}}!}\zeta_{0}^{i_0}\zeta_{1}^{i_1}\cdots \zeta_{\mathbf{q}}^{i_{\mathbf{q}}}=0$, $i=2,\cdots,\mathbf{q}$ when $\mathbf{q}\geq 2.$

\end{corollary}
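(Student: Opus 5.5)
The plan is to deduce Corollary~\ref{corollary1} from Theorem~\ref{maintheorem5} and then pin down the exponents and coefficients by comparing growth along the positive real axis. Here $\mathbf{l}=0$, so the $\eta_j$-sum is absent. We have $\mu_{\mathbf{k}}=\mu_2$, and the positive real axis is the direction where $e^{\mu_2 z}$ dominates. Theorem~\ref{maintheorem5} gives
\[
g=c+\sum_{i=0}^{\mathbf{m}}\zeta_i e^{\frac{\mu_2-\varsigma_i}{n}z}, \qquad \varsigma_0=0,\quad \zeta_0^n=\beta_2,
\]
with $c$ of order $<1$. We may assume the exponents are distinct, the $\zeta_i$ are nonzero for $i\ge1$, and, as remarked after Theorem~\ref{maintheorem5}, $\Re\bigl((\mu_2-\varsigma_i)/n\bigr)>0$.

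First, substitute this $g$ into \eqref{EQ1}. Expand $g^n$ by the multinomial formula. Each term of $P(z,g)$ has degree $\le n-1$ and coefficients in $\mathcal{S}$, so it is an exponential sum whose exponents are sums of at most $n-1$ of the $(\mu_2-\varsigma_i)/n$, times factors of order $<1$ or logarithmic-derivative type. Such terms are of size $o(|e^{\lambda z}|)$ only relative to the top exponent; more precisely, along $z=r\to+\infty$ they are bounded by $|e^{\frac{n-1}{n}\mu_2 z}|e^{o(r)}$ outside a small exceptional set, by the lemma on the logarithmic derivative and the Phragm\'en--Lindel\"of argument already used for Theorem~\ref{maintheorem5}.

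Second, order the exponents by real part. The key claim is that the $\varsigma_i$ form an arithmetic progression with step $n(1-\mathbf{s}_1)\mu_2/n\cdot$, i.e.\ the exponents are $\bigl[i(\mathbf{s}_1-1)+\tfrac1n\bigr]\mu_2$. I would argue inductively. In $g^n$ the leading term $\zeta_0^n e^{\mu_2 z}$ matches $\beta_2e^{\mu_2 z}$. The next largest term in $g^n$ is $n\zeta_0^{n-1}\zeta_1 e^{(\mu_2-\varsigma_1/n)z}$, where $\varsigma_1$ is the exponent gap of smallest real part. It must cancel against something on the right or against $\beta_1e^{\mu_1 z}$. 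There are three possibilities:
\begin{itemize}
\item If $\mu_2-\varsigma_1/n>\mu_1$ and also exceeds $\frac{n-1}{n}\mu_2$, then nothing can cancel it, which is a contradiction.
\item If $\mu_2-\varsigma_1/n=\mu_1$, this forces $n\zeta_0^{n-1}\zeta_1=\beta_1$ and the exponent $(\mathbf{s}_1-1+\tfrac1n)\mu_2$.
\item Otherwise the term lies below the noise level $\frac{n-1}{n}\mu_2$ and can be absorbed into $c$.
\end{itemize}
Continuing this way, every exponent with real part greater than $\frac{n-1}{n}\mu_2$ must be generated by combinations $\sum_j i_j\varsigma_j$. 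Since only one extra frequency $\mu_1$ is available, all such exponents lie in $\mu_2\bigl(1-\tfrac{i(1-\mathbf{s}_1)}{1}\bigr)$ shifted appropriately. At each level $i\ge2$ the multinomial coefficient sum of $e^{(\mu_2-i(1-\mathbf{s}_1)\mu_2)z}$ must vanish, which gives the stated recursions.

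Third, the truncation index. The term indexed by $i$ in $g$ is genuinely forced, rather than absorbable into $c$ or the error, exactly while its contribution $e^{[\mu_2-i(1-\mathbf{s}_1)\mu_2]z}$ in $g^n$ stays above the error level $e^{\frac{n-1}{n}\mu_2 z}$. The exponent of the $i$-th term of $g$ is then $\bigl[i(\mathbf{s}_1-1)+\tfrac1n\bigr]\mu_2$. It must keep positive real part, since otherwise it is bounded and merges into $c$. That condition is $i(1-\mathbf{s}_1)<1/n$, which translates into the defining inequality of $\mathbf{q}$. Terms with $i>\mathbf{q}$ decay on the positive axis and, being of order $\le1$ with the sector information from Theorem~\ref{maintheorem5}, are put into $c$.

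The main obstacle is the second step: controlling the contribution of $P(z,g)$ precisely enough to say that it cannot cancel terms of $g^n$ above level $\frac{n-1}{n}\mu_2$. I would handle this by the same indicator and Phragm\'en--Lindel\"of comparison used in the proof of Theorem~\ref{maintheorem5}, applied along the positive real axis, together with the uniqueness of exponential-sum representations with coefficients of order $<1$ (Borel-type lemma).
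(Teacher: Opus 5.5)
Your plan is essentially the route the paper indicates just before the corollary: take the form \eqref{TCsolu} from Theorem~\ref{maintheorem5}, substitute it into \eqref{EQ1}, and compare the terms lying above the level $e^{\frac{n-1}{n}\mu_2 r}$ along rays near the positive real axis. For a two-term $E$ the progression $\varsigma_i=in(\mu_2-\mu_1)$ can also be read off directly from the binomial expansion \eqref{EQ1-algorim-4-cal} of $E^{1/n}$, which spares you the induction; if you keep the induction, fix the case $\frac{n-1}{n}\mu_2<\Re(\mu_2-\varsigma_1/n)<\mu_1$, which is not ``below the noise level'' but contradictory because $\beta_1e^{\mu_1z}$ is then unmatched, and separate exponents of equal real part by tilting the ray or using a lexicographic order.
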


To prove Theorem~\ref{maintheorem5}, we may follow the idea in~\cite{zhang2021,zhang2021-1,Zhang2022} to reduce equation \eqref{EQ1} into a non-homogeneous linear differential equation. In the first version of this paper, we have already developed a reduction process for this idea. However, the reduction process is too complicated and here we shall provide an elegant proof for Theorem~\ref{maintheorem5}.

\subsubsection{Preliminaries}\label{subsubse2: preliminaries}

For the coefficients $w_1$, $\cdots$, $w_{k}$ in equation \eqref{EQ1}, we suppose that $w_{i_1}$, $\cdots$, $w_{i_m}$ are in $\mathcal{G}$. Let $\sigma_{0}=\max\{\sigma(w_{i_1}),\cdots,\sigma(w_{i_m})\}$. By assumption, $\sigma_0<1$. From now on we fix a small number $\varepsilon>0$ such that $\sigma_{0}+2\varepsilon<1$. We first have the following

\begin{lemma}\label{orderlemma}
Under the assumptions of Theorem~\ref{maintheorem5}, we have $\sigma(g)=1$.
\end{lemma}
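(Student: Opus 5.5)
We prove the two inequalities $\sigma(g)\geq 1$ and $\sigma(g)\leq 1$ separately, both by Nevanlinna theory applied to \eqref{EQ1}. Throughout we use two facts about the coefficients. A coefficient $w\in\mathcal{G}$ satisfies $T(r,w)=O(r^{\sigma_0+\varepsilon})=o(r)$. A coefficient $w=\varphi^{(k)}/\varphi\in\mathcal{H}$ with $\sigma(\varphi)<\infty$ satisfies $m(r,w)=O(\log r)$ by the lemma on the logarithmic derivative. Its poles lie among the zeros and poles of $\varphi$ and have multiplicity at most $k$. We also use $T(r,E)=\frac{C}{\pi}r+o(r)$ with $C>0$ (for instance the perimeter of the convex hull of the exponents together with $0$ when the $\gamma_j$ are nonzero), since $\beta_{\mathbf{k}}\neq0$.

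\emph{Lower bound.} Suppose $\sigma(g)<1$. Every term of $P(z,g)$ is a product of a coefficient $w_i$ and derivatives of $g$. If no coefficient comes from $\mathcal{H}$, then the left-hand side of \eqref{EQ1} has order $<1$, while $\sigma(E)=1$, which is a contradiction. In general I would argue through the Phragm\'{e}n--Lindel\"{o}f indicator, as the paper announces. Consider a ray $\arg z=\theta$ off the critical lines with $h_E(\theta)>0$; such a ray exists, for instance the positive real axis where $e^{\mu_{\mathbf{k}}z}$ dominates. There $\log|E|\sim h_E(\theta)r$. On the other hand, $|g^{(j)}|\leq e^{r^{\sigma(g)+\varepsilon}}$ outside a small exceptional set. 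Using a Gundersen-type estimate $|\varphi^{(k)}/\varphi|\leq r^{M}$ outside a set of finite logarithmic measure, the terms with $\mathcal{H}$ coefficients are also bounded by $e^{o(r)}$ on most rays. The $\mathcal{G}$ coefficients satisfy $|w|\leq e^{r^{\sigma_0+\varepsilon}}$ off a small exceptional set of $r$, by the minimum modulus/Cartan-type estimate for the ratio of canonical products. Hence the left-hand side is $e^{o(r)}$ along a sequence $r\to\infty$ on that ray, contradicting the growth of $E$. So $\sigma(g)\geq1$.

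\emph{Upper bound.} I would use the standard Tumura--Clunie counting. Since $g$ is entire, write $g^n=E-P(z,g)$. The Clunie lemma gives $m(r,P(z,g))\leq (n-1)m(r,g)+S(r)$, with an error $S(r)$ that is small relative to $T(r,g)$ plus the coefficient characteristics. That bound does not close the argument directly, so I would instead take the pointwise route. Choose $|z|=r$ outside an exceptional set of finite logarithmic measure, so that $|g^{(j)}/g|\leq r^{M}$ there. Then $|g|^n\leq |E|+\sum_i|w_i||g|^{l_i}r^{M'}$ with $l_i\leq n-1$. Where $|g|\geq1$ this yields $|g|\leq |E|+\sum_i|w_i|r^{M'}$. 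Hence $\log M(r,g)\leq O(r)+\max_i\log^+|w_i|$ on such circles. The coefficient sizes are $e^{O(r^{\sigma_0+\varepsilon})}$ for $\mathcal{G}$ and polynomially bounded off small sets for $\mathcal{H}$. Since the exceptional set has finite logarithmic measure, the standard lemma removing such sets gives $\sigma(g)\leq1$.

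The \textbf{main obstacle} is handling poles of meromorphic coefficients (both in $\mathcal{G}$ and in $\mathcal{H}$) in these pointwise estimates. The circles $|z|=r$ must avoid neighbourhoods of the poles, which are of finite order and so have finite exponent of convergence. I would choose $r$ outside a set of finite logarithmic measure where the Cartan-type bounds $|w|\leq \exp(r^{\sigma(w)+\varepsilon})$ hold, and apply the same bounds along rays in the lower-bound step.
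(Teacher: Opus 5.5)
Your lower-bound argument is fine, if roundabout, but the upper bound has a genuine gap. The step ``choose $|z|=r$ outside a set of finite logarithmic measure so that $|g^{(j)}/g|\le r^{M}$'' is Gundersen's estimate for functions of \emph{finite} order. For an arbitrary transcendental $g$, the available pointwise bounds are in terms of $T(\alpha r,g)$, not a fixed power of $r$. Since finiteness of $\sigma(g)$ is part of what you want to prove, the argument is circular. Indeed, if $g$ had infinite order, Wiman--Valiron theory gives $g^{(j)}(z)/g(z)=(\nu(r)/z)^{j}(1+o(1))$ at points with $|g(z)|=M(r,g)$, for $r$ off a set of finite logarithmic measure. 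Moreover, for every $M$ the central index satisfies $\nu(r)>r^{M}$ on a set of $r$ of infinite logarithmic measure. The maximum-modulus points are exactly where you evaluate $|g|^{n}\le|E|+\sum_i|w_i||g|^{l_i}r^{M'}$ to bound $\log M(r,g)$. So the real obstacle is the a priori unknown growth of $g$, not the poles of the coefficients.

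The remedy is to stay with proximity functions, and the Clunie-type line you set aside already closes the argument. Write $P(z,g)=\sum_i w_i\tilde w_i g^{l_i}$ with $\tilde w_i=\prod_{j\ge1}(g^{(j)}/g)^{n_{i,j}}$ and $l_i\le n-1$. From $g^n=E-P(z,g)$,
\[
n\,m(r,g)=m(r,g^{n})\le m(r,E)+(n-1)\,m(r,g)+\sum_i m(r,w_i)+\sum_{i,j}n_{i,j}\,m\bigl(r,g^{(j)}/g\bigr)+O(1).
\]
Only $m(r,w_i)$ enters, never $T(r,w_i)$. It is $O(r^{\sigma_0+\varepsilon})$ for $w_i\in\mathcal{G}$ and $O(\log r)$ for $w_i\in\mathcal{H}$. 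The lemma on the logarithmic derivative gives $m(r,g^{(j)}/g)=O(\log T(r,g)+\log r)$ off a set of finite linear measure for \emph{every} transcendental $g$; polynomial $g$ is excluded by your lower bound. Since $g$ is entire, $T(r,g)=m(r,g)\le m(r,E)+O(r^{\sigma_0+\varepsilon})+O(\log T(r,g)+\log r)=O(r)$ off that set, and \cite[Lemma~1.1.1]{Laine1993} removes it.

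This is essentially the paper's route. The paper in fact proves the sharper two-sided estimate $m(r,E)=n\,m(r,g)+o(T(r,g))+O(r^{\sigma_0+\varepsilon})$ and combines it with Steinmetz's $m(r,E)=\xi r[1+o(1)]$ to get $nT(r,g)=\xi r[1+o(1)]$. That yields both $\sigma(g)\le 1$ and $\sigma(g)\ge 1$ at once, so your separate ray argument for the lower bound, though correct, is unnecessary.
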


\begin{proof}
For simplicity, denote $P=P(z,g)$. We rewrite $P$ as $P=\sum^{I}_{i=1}w_{i}\tilde{w}_ig^{l_i}$, where $l_i=\sum_{j=0}^{k_i}n_{i,j}$ and $\tilde{w}_i=\prod_{j=1}^{k_i}(g^{(j)}/g)^{n_{i,j}}$.  By the definition of $\mathcal{S}$ and the lemma on the logarithmic derivative, we have $m(r,w_i)=O(r^{\sigma_{0}+\varepsilon})$. Also, we have $m(r,\tilde{w}_i)=o(T(r,g))$, where $r\to\infty$ possibly outside an exceptional set of finite linear measure. Then, by looking at the proof of \cite[Theorem~1.12]{yangy:03}, we deduce from \eqref{EQ1} that
\begin{equation}\label{Preq5}
\begin{split}
m(r,E)=m(g^n+P)=nm(r,g)+o(T(r,g))+O(r^{\sigma_{0}+\varepsilon}),
\end{split}
\end{equation}
where $r\to\infty$ possibly outside an exceptional set of finite linear measure. Note that $E$ in \eqref{EQ1-Exp-pol} is an entire function. By Steinmetz~\cite{Steinmetz1978}, there is a positive real number $\xi$ dependent on $\mu_{\mathbf{k}},\cdots,\mu_1$ and $\nu_{\mathbf{l}},\cdots,\nu_1$ such that
\begin{equation}\label{Preq5-fu}
\begin{split}
m(r,E)=T(r,E)=\xi r[1+o(1)], \ r\to\infty.
\end{split}
\end{equation}
From \eqref{Preq5} and \eqref{Preq5-fu} we get $nT(r,g)=\xi r[1+o(1)]+o(T(r,g))$. Then, for each $\varepsilon>0$, $T(r,g)\leq (1+\varepsilon)\xi r/n$ for all $r$ possibly outside an exceptional set of finite linear measure. By using \cite[lemma~1.1.1]{Laine1993} to remove this exceptional set, if needed, we obtain $\sigma(g)\leq 1$. Thus the error term $o(T(r,g))$ in \eqref{Preq5} can be replaced by $O(\log r)$ without any exceptional set. Then we have $nT(r,g)=\xi r[1+o(1)]$, $r\to\infty$ and the assertion follows.

\end{proof}

By the assumptions of Theorem~\ref{maintheorem5}, we may choose four numbers $\theta_1\in[-\pi/2,0)$, $\theta_2\in(0,\pi/2]$, $\theta_3\in[\pi/2,\pi)$ and $\theta_4\in(\pi,3\pi/2]$ such that $\theta_2=-\theta_1$, $\theta_3=\pi+\theta_1$, $\theta_4=\pi-\theta_1$ and $|e^{\mu_{\mathbf{k}}z}|>|e^{\mu_{\mathbf{k-1}}z}|\geq \cdots\geq |e^{\mu_1z}|$ on rays $z=re^{\mathbf{i}\theta}$ for all $\theta\in(\theta_1,\theta_2)$ and $|e^{-\nu_{\mathbf{l}}z}|>|e^{-\nu_{\mathbf{l}-1}z}|\geq \cdots\geq |e^{-\nu_1z}|$ on ray $z=re^{\mathbf{i}\theta}$ for all $\theta\in(\theta_3,\theta_4)$. Moreover, we may choose $\theta_2$ so that $e^{-\nu_{\mathbf{l}-j}z}\to 0$ as $z\to\infty$ along rays $z=re^{\mathbf{i}\theta}$ for all $\theta\in(\theta_1,\theta_2)$ and all $j=0,\cdots,\mathbf{l}-1$ and $e^{\mu_{\mathbf{k}-i}z}\to 0$ as $z\to\infty$ along rays $z=re^{\mathbf{i}\theta}$ for all $\theta\in(\theta_3,\theta_4)$ and all $i=0,\cdots,\mathbf{k}-1$. Then, for a small $\epsilon>0$, we denote
\begin{equation}\label{sectors-1}
\begin{split}
\mathcal{A}_{1,\epsilon}&=\left\{re^{\mathbf{i}\theta}: \ 0< r<\infty, \ \theta\in(\theta_1+\epsilon,\theta_2-\varepsilon) \right\},\\
\mathcal{A}_{2,\epsilon}&=\left\{re^{\mathbf{i}\theta}: \ 0< r<\infty, \ \theta\in (\theta_3+\epsilon,\theta_4-\epsilon)\right\}.
\end{split}
\end{equation}
Letting $\theta_5=\theta_1+2\pi$, we also denote
\begin{equation}\label{sectors-2}
\begin{split}
\mathcal{A}_{3,\epsilon}&=\left\{re^{\mathbf{i}\theta}: \ 0< r<\infty, \ \theta\in (\theta_2-\epsilon,\theta_3+\epsilon)\right\},\\
\mathcal{A}_{4,\epsilon}&=\left\{re^{\mathbf{i}\theta}: \ 0< r<\infty, \ \theta\in (\theta_4-\epsilon,\theta_5+\epsilon)\right\}.
\end{split}
\end{equation}
To achieve the two exponential sums in \eqref{TCsolu}, we shall analyze the asymptotic behaviors of $g$ as $z\to\infty$ along rays $z=re^{\mathbf{i}\theta}$ in the two sectors $\mathcal{A}_{1,\epsilon}$ and $\mathcal{A}_{2,\epsilon}$.

Due to the appearance of the zeros and poles of $g$ and the coefficient functions $w_1$, $\cdots$, $w_{k}$ in $P(z,g)$, we fail to obtain an asymptotic estimate for $g$ along a ray $z=re^{\mathbf{i}\theta}$ for some $\theta\in(\theta_1,\theta_2)\cup(\theta_3,\theta_4)$. However, the set of all these exceptions is very 'small'. Let $w$ denote $g$ or one of the coefficient functions $w_1$, $\cdots$, $w_{k}$. Recall from~\cite[pp.~84]{Laine1993} that the union of the disks centered at its the zeros and poles whose radii have finite sum form an \emph{$R$--set} in the complex plane. The set of $\theta\in(-\pi,\pi]$ such that the ray $z=re^{\mathbf{i}\theta}$ meets infinitely many disks in the associated $R$-set of $w$ has measure zero. For convenience, in the following we denote by $\mathcal{R}$ the union of all the $R$--sets associated with $g$ and the coefficient functions $w_1$, $\cdots$, $w_{k}$.

When the meromorphic function $w$ is nonconstant, for any $\theta\in(-\pi,\pi]$ such that the ray $z=re^{\mathbf{i}\theta}$ meets finitely many disks in $\mathcal{R}$, by the classical Borel's lemma, we have
\begin{equation}\label{Gro-estimate-1}
\begin{split}
|w(z)|\leq \exp(r^{\sigma(w)+\varepsilon})
\end{split}
\end{equation}
for all $z$ on the ray with large $r$. Moreover, if $w=\varphi^{(k)}/\varphi$ for some meromorphic function $\varphi$ having finite order of growth, then by Gundersen~\cite[Corollary~1]{gundersen:88}, we have
\begin{equation}\label{Gro-estimate-2}
\begin{split}
|w(z)|\leq r^{k[\sigma(\varphi)-1+\varepsilon]}
\end{split}
\end{equation}
for all $z$ on the ray with large $r$. Using the two inequalities \eqref{Gro-estimate-1} and \eqref{Gro-estimate-2}, we may first provide some rough asymptotic estimates for $g$ along rays in $\mathcal{A}_{1,\epsilon}$ and $\mathcal{A}_{2,\epsilon}$. We have the following

\begin{lemma}[\cite{Zhang2022}]\label{growthlemma}
Under the assumptions of Theorem~\ref{maintheorem5}, for any $\theta\in(\theta_1,\theta_2)\cup(\theta_3,\theta_4)$ such that the ray $z=re^{\mathbf{i}\theta}$ meets finitely many disks in $\mathcal{R}$,
\begin{itemize}
\item [(1)]
when $\gamma_{\mathbf{l}}$, $\cdots$, $\gamma_1$ are all zero, if $\theta\in(\theta_1,\theta_2)$, then $g(z)^n=\beta_{\mathbf{k}}e^{\mu_{\mathbf{k}}z}[1+o(1)]$ as $z\to\infty$ along the ray; if $\theta\in(\theta_3,\theta_4)$, then there is an integer $N$ dependent on $\theta$ such that $|g(z)|\leq r^N$ for all $z$ on the ray with large $r$;
\item [(1)]
when $\gamma_{\mathbf{l}}$, $\cdots$, $\gamma_1$ are all nonzero, if $\theta\in(\theta_1,\theta_2)$, then $g(z)^n=\beta_{\mathbf{k}}e^{\mu_{\mathbf{k}} z}[1+o(1)]$ as $z\to\infty$ along the ray; if $\theta \in(\theta_3,\theta_4)$, then $g(z)^n=\gamma_{\mathbf{l}}e^{-\nu_{\mathbf{l}}z}[1+o(1)]$ as $z\to\infty$ along the ray.
\end{itemize}
\end{lemma}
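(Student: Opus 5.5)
Fix $\theta\in(\theta_1,\theta_2)\cup(\theta_3,\theta_4)$ such that the ray $z=re^{\mathbf{i}\theta}$ meets only finitely many disks of $\mathcal{R}$. The plan is to compare the two sides of \eqref{EQ1} along this ray. The term $g^n$ should dominate $P(z,g)$ wherever $|g|$ is large, since $\deg_g P\le n-1$. Then $g^n$ is forced to match the dominant exponential of $E$ in that sector, or else to stay polynomially bounded.

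\textbf{Step 1: bounds for the coefficients and for $P$.} Off the disks of $\mathcal{R}$, every coefficient $w_i\in\mathcal{G}$ satisfies $|w_i|\le\exp(r^{\sigma_0+\varepsilon})$ by \eqref{Gro-estimate-1}. Every $w_i\in\mathcal{H}$ is bounded by a power of $r$ by \eqref{Gro-estimate-2}. Since $\sigma(g)=1$ by Lemma~\ref{orderlemma}, Gundersen's estimate \eqref{Gro-estimate-2} applied to $\varphi=g$ gives $|g^{(j)}/g|\le r^{M}$ on the ray for some $M$. Writing each monomial of $P$ as $w_i\tilde w_i g^{l_i}$ with $l_i\le n-1$, we get
\[
|P(z,g)|\le \exp\bigl(r^{\sigma_0+2\varepsilon}\bigr)\,\max\{1,|g(z)|\}^{n-1}
\]
for all large $r$ on the ray.

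\textbf{Step 2: the size of $E$ on the ray.} For $\theta\in(\theta_1,\theta_2)$, the choice of the sectors gives $E(z)=\beta_{\mathbf{k}}e^{\mu_{\mathbf{k}}z}[1+o(1)]$, because the other terms are exponentially smaller. Indeed, $|e^{\mu_{\mathbf{k}-i}z}/e^{\mu_{\mathbf{k}}z}|$ decays like $e^{-cr}$ with $c>0$ depending continuously on $\theta$ in the open interval, and the $e^{-\nu z}$ terms tend to zero. For $\theta\in(\theta_3,\theta_4)$, in the case of nonzero $\gamma$'s we similarly get $E=\gamma_{\mathbf{l}}e^{-\nu_{\mathbf{l}}z}[1+o(1)]$. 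In the case of zero $\gamma$'s we get $E\to0$ exponentially.

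\textbf{Step 3: comparison.} In the case $|E|\sim|\beta|e^{cr}$ with $c>0$, suppose that along some sequence $|g|\le\exp(r^{\sigma_0+3\varepsilon})$. Then $|g^n+P|\le\exp(nr^{\sigma_0+3\varepsilon}+r^{\sigma_0+2\varepsilon})$, which is far smaller than $|E|$, a contradiction. Hence eventually $|g|\ge \exp(r^{\sigma_0+3\varepsilon})$. In that regime
\[
|P|/|g|^n\le \exp(r^{\sigma_0+2\varepsilon})/|g|\to0,
\]
so $g^n=E(1+o(1))=\beta_{\mathbf{k}}e^{\mu_{\mathbf{k}}z}[1+o(1)]$. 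The same argument applies on $(\theta_3,\theta_4)$ with $\gamma_{\mathbf{l}}e^{-\nu_{\mathbf{l}}z}$.

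When $E\to0$, suppose $|g|\ge\exp(r^{\sigma_0+3\varepsilon})$ at some points. There $g^n(1+o(1))=E$ would be bounded, a contradiction. So $|g|\le\exp(r^{\sigma_0+3\varepsilon})$ on the whole ray. To improve this to $|g|\le r^N$, I would use $|g^{(j)}/g|\le r^M$ together with $g'=g\cdot(g'/g)$ integrated along the ray. The main obstacle is this last upgrade to polynomial growth. What I would try first: the equation gives $|g|^n\le |E|+|P|$, and $|P|\le \exp(r^{\sigma_0+2\varepsilon})|g|^{n-1}$ only yields $|g|\le\exp(r^{\sigma_0+2\varepsilon})$, not a polynomial bound. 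So I would instead refine Step 1 for the $\mathcal{G}$ coefficients using that $\log|w_i|=O(r^{\sigma_0+\varepsilon})$ is sub-linear. If the intended statement only needs the subexponential bound, that bound suffices. Otherwise, I would accept the argument of \cite{Zhang2022} through a Phragm\'en--Lindel\"of argument in the sector $\mathcal{A}_{2,\epsilon}$: $g$ has order $1$ and its indicator there is nonpositive, which yields polynomial growth off $\mathcal{R}$.
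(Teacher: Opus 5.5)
Your Steps~1--3 correctly prove the two asymptotic clauses, $g^n=\beta_{\mathbf{k}}e^{\mu_{\mathbf{k}}z}[1+o(1)]$ on $(\theta_1,\theta_2)$ and $g^n=\gamma_{\mathbf{l}}e^{-\nu_{\mathbf{l}}z}[1+o(1)]$ on $(\theta_3,\theta_4)$. The mechanism is the pointwise dichotomy according to whether $|g(z)|$ lies below or above $\exp(r^{\sigma_0+3\varepsilon})$, with $\varepsilon$ shrunk so that $\sigma_0+3\varepsilon<1$. The paper gives no proof of this lemma; it only cites \cite{Zhang2022}.

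The gap is the remaining clause: $|g(z)|\le r^N$ on $(\theta_3,\theta_4)$ when all $\gamma_j=0$. Your fallback does not close it. A nonpositive Phragm\'en--Lindel\"of indicator for a function of order~$1$ and finite type only gives $|g(z)|\le e^{\epsilon r}$ for every $\epsilon>0$, never a power of $r$. For instance, $\cosh\sqrt z$ has indicator $0$ in every direction, yet it is not polynomially bounded on any ray except the negative real axis.

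In fact no argument can close the gap, because with coefficients in $\mathcal{G}$ the clause is false. Take $n=2$ and $E(z)=e^{z}$, so $\mathbf{k}=1$ and all $\gamma_j=0$. Put $c(z)=\cosh\sqrt z$, $g=e^{z/2}+c$, and
\[
w_2=\frac{c^2}{c'-c/2},\qquad w_1=-2c-\frac{w_2}{2}.
\]
Then
\[
g^2+w_1g+w_2g'=e^{z}+\bigl(2c+w_1+\tfrac{w_2}{2}\bigr)e^{z/2}+\bigl(c^2+w_1c+w_2c'\bigr)=e^{z}.
\]
Here $w_1,w_2$ are nonzero meromorphic functions of order $\le 1/2$, so they lie in $\mathcal{G}$. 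Thus all hypotheses of Theorem~\ref{maintheorem5} hold with $\sigma_0=1/2$. Yet on every admissible ray $\arg z=\theta$ with $\theta\in(\theta_3,\theta_4)$ and $\theta\neq\pi$, one has $\log|g(z)|=r^{1/2}|\cos(\theta/2)|+O(1)$, so no bound $r^N$ exists. Your suggested refinement of Step~1 cannot help either, since these coefficients genuinely have size $\exp(r^{1/2})$.

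What is true is exactly what your Step~3 already proves: $|g(z)|\le\exp(r^{\sigma_0+3\varepsilon})$ on those rays. This is all the proof of Theorem~\ref{maintheorem5} really needs, because Lemma~\ref{smallfunctionlemma} only asks for a bound $\exp(r^{\sigma_0+\varepsilon})$ with $\varepsilon$ arbitrarily small.

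The polynomial bound does hold under an extra assumption: every coefficient is polynomially bounded off $\mathcal{R}$. This covers constants and elements of $\mathcal{H}$, such as the quotients $\kappa^{(j)}/\kappa$ arising in Part~II. Then $|P|\le Cr^M\max\{1,|g|\}^{n-1}$, so $|g|\ge 1$ forces $|g|\le |E|+Cr^M$. Since $E\to 0$ on $(\theta_3,\theta_4)$, this gives $|g|\le r^{M+1}$.
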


To show that the entire function $c$ in the solution \eqref{TCsolu} has order of growth~$<1$, it is sufficient to have some growth estimates for $g$ along rays in $\mathcal{A}_{1,\epsilon}$ and $\mathcal{A}_{2,\epsilon}$. We have the following

\begin{lemma}\label{smallfunctionlemma}
Let $c$ be an entire function having order of growth $\sigma(c)\leq1$. If for any $\theta\in(\theta_1,\theta_2)\cup (\theta_3,\theta_4)$ such that the ray $z=re^{\mathbf{i}\theta}$ meets finitely many disks in $\mathcal{R}$, we have $|c(z)|\leq \exp(r^{\sigma_{0}+\varepsilon})$ for all $z$ on the ray with large $r$, then $\sigma(c)\leq \sigma_0$.
\end{lemma}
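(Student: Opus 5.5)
We will prove Lemma~\ref{smallfunctionlemma} with the Phragm\'{e}n--Lindel\"{o}f principle applied on the four sectors $\mathcal{A}_{1,\epsilon}$, $\mathcal{A}_{2,\epsilon}$, $\mathcal{A}_{3,\epsilon}$, $\mathcal{A}_{4,\epsilon}$. The goal is to upgrade the ray-wise bound $|c(z)|\leq\exp(r^{\sigma_0+\varepsilon})$, known only on almost every ray in $(\theta_1,\theta_2)\cup(\theta_3,\theta_4)$, to a bound $|c(z)|\leq \exp(O(r^{\sigma_0+2\varepsilon}))$ in the whole plane. From this bound $\sigma(c)\leq\sigma_0+2\varepsilon$, and since $\varepsilon$ can be taken arbitrarily small, $\sigma(c)\le\sigma_0$.

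\emph{Step 1: good rays.} The exceptional set of $\theta$, those whose rays meet infinitely many disks of $\mathcal{R}$, has measure zero. So we can pick good directions $\alpha_1<\alpha_2$ in $(\theta_1,\theta_2)$ and $\alpha_3<\alpha_4$ in $(\theta_3,\theta_4)$, arbitrarily close to the respective endpoints (within $\epsilon$). In particular we can take $\alpha_1<0<\alpha_2$, $\alpha_2<\pi/2+\epsilon$, $\alpha_3>\pi/2-\epsilon$, and similarly around $\theta_4,\theta_5$. On each of these four rays, $|c(z)|\le \exp(r^{\sigma_0+\varepsilon})$ for large $r$, and since $c$ is entire it is bounded on the initial segments. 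The plane is then divided into four closed sectors bounded by consecutive good rays. These are $S_{12}$ between $\alpha_1$ and $\alpha_2$, $S_{23}$ between $\alpha_2$ and $\alpha_3$, $S_{34}$ between $\alpha_3$ and $\alpha_4$, and $S_{41}$ between $\alpha_4$ and $\alpha_1+2\pi$. The openings of $S_{23}$ and $S_{41}$ are at most $\theta_3-\theta_2+2\epsilon\le\pi+2\epsilon$, but only slightly more than $\pi$ at worst. The openings of $S_{12}$ and $S_{34}$ are less than $\pi$.

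\emph{Step 2: Phragm\'{e}n--Lindel\"{o}f in each sector.} Consider $F(z)=c(z)\exp(-Mz_\theta^{\sigma_0+2\varepsilon})$, where $z_\theta^{\rho}$ is the branch rotated so that $\Re$ of it is $\geq \cos(\rho\cdot\text{half-opening})\,r^\rho>0$ inside the sector. This is possible because $(\sigma_0+2\varepsilon)\cdot(\pi+2\epsilon)/2<\pi/2$ once $\epsilon$ is small relative to $1-\sigma_0-2\varepsilon$; this is where the choice $\sigma_0+2\varepsilon<1$ is used. With $M$ large, $F$ is bounded on the two boundary rays. Inside the sector $c$ has order $\le1$ while the sector opening is $<\pi/1$ for $S_{12},S_{34}$. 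For the wide sectors $S_{23},S_{41}$ the opening exceeds $\pi$, which is the delicate point. Here I would first split the wide sector by an additional good ray, or else use the freedom that $\theta_2-\epsilon$ and $\theta_3+\epsilon$ already bound $\mathcal{A}_{3,\epsilon}$, whose opening is $\theta_3-\theta_2+2\epsilon=\pi+2\theta_1+2\epsilon\le\pi$ when $\theta_1<-\epsilon$. Since $\theta_1<0$, we may choose $\epsilon<|\theta_1|$, so every sector has opening strictly less than $\pi$. The theorem \cite[Theorem~7.3]{hollandasb} then applies to functions of order $\le1<\pi/\text{opening}$ and gives $|F|$ bounded in the sector. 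Hence $|c(z)|\le K\exp(M' r^{\sigma_0+2\varepsilon})$ throughout each sector.

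\emph{Step 3: conclusion.} Combining the four sectors gives $\log M(r,c)=O(r^{\sigma_0+2\varepsilon})$, so $\sigma(c)\le\sigma_0+2\varepsilon$ and, letting $\varepsilon\to0$, $\sigma(c)\le\sigma_0$. The main obstacle is Step 2's opening-angle bookkeeping. We must guarantee that all four sectors between chosen good rays have opening $<\pi$, so that Phragm\'{e}n--Lindel\"{o}f applies to an order-one function. At the same time each opening must be small enough that the comparison function $\exp(Mz^{\sigma_0+2\varepsilon})$ has positive real part throughout. Both requirements are met by taking the good rays within $\epsilon<|\theta_1|$ of $\theta_1,\theta_2,\theta_3,\theta_4$.
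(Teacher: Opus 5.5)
Your proof is correct and is essentially the paper's argument: you apply Phragm\'{e}n--Lindel\"{o}f to $c(z)$ multiplied by an auxiliary factor $\exp(-Mz^{\sigma_0+2\varepsilon})$ (the paper's $\exp(e^{\mathbf{i}\vartheta}z^{\sigma_0+2\varepsilon})$) in the four sectors $\mathcal{A}_{1,\epsilon},\dots,\mathcal{A}_{4,\epsilon}$, and then let $\varepsilon\to0$. You are in fact more careful than the paper on two points it leaves implicit: you take the boundary rays to be non-exceptional directions, and you check that every opening is $<\pi$, since the gap sectors have opening at most $\pi+2\theta_1+2\epsilon$, which is $<\pi$ once $\epsilon<|\theta_1|$.
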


\begin{proof}

Since $c$ is an entire function and~$\sigma(c)\leq1$, we have $|c(z)|\leq \exp(r^{1+\varepsilon})$ for all $z$ with large $r$. By assumption, for a small $\epsilon>0$, we have $|c(z)|\leq \exp(r^{\sigma_{0}+\varepsilon})$ for all $z$ on the two rays $z=re^{\mathbf{i}(\theta_1+\epsilon)}$ and $z=re^{\mathbf{i}(\theta_2-\epsilon)}$ with large $r$. We choose a suitable argument $\vartheta$ so that the analytic function $c(z)\exp(e^{i\vartheta}z^{\sigma_{0}+2\varepsilon})\to 0$ as $z\to\infty$ along the two rays $z=re^{\mathbf{i}(\theta_1+\epsilon)}$ and $z=re^{\mathbf{i}(\theta_2-\epsilon)}$. Note that $|c(z)\exp(e^{\mathbf{i}\vartheta}z^{\sigma_{0}+2\varepsilon})|\leq \exp(r^{1+2\varepsilon})$ for all $z\in \mathcal{A}_{1,\epsilon}$ with large $r$. Then we apply the Phragm\'{e}n--Lindel\"{o}f theorem to the function $c(z)\exp(e^{\mathbf{i}\vartheta}z^{\sigma_{0}+2\varepsilon})$ in the closure $\overline{\mathcal{A}}_{1,\epsilon}$ of $\mathcal{A}_{1,\epsilon}$ and conclude that $c(z)\exp(e^{\mathbf{i}\vartheta}z^{\sigma_{0}+2\varepsilon})\to 0$ uniformly as $z\to\infty$ in $\mathcal{A}_{1,\epsilon}$. Thus $|c(z)|\leq \exp(r^{\sigma_{0}+2\varepsilon})$ for all $z\in \overline{\mathcal{A}}_{1,\epsilon}$ with large $r$. For $\mathcal{A}_{2,\epsilon}$ in \eqref{sectors-1} and $\mathcal{A}_{3,\epsilon}$ and $\mathcal{A}_{4,\epsilon}$ in \eqref{sectors-2}, by similar arguments we also obtain that $|c(z)|\leq \exp(r^{\sigma_{0}+2\varepsilon})$ for all $z\in \overline{\mathcal{A}}_{i,\epsilon}$, $i=2,3,4$, with large $r$. Since $\varepsilon>0$ can be arbitrarily small, we have $\sigma(c)\leq \sigma_0$.

\end{proof}

\subsubsection{Proof of Theorem~\ref{maintheorem5}}\label{subsubse2: proof of maintheorem5}

Denote $\mathbf{w}=P(z,g)/g^{n-1}$ for simplicity. We rewrite equation \eqref{EQ1} as
\begin{equation}\label{EQ1-algorim-3}
g(z)^n\left(1+\frac{\mathbf{w}(z)}{g(z)}\right)=\left(\sum_{i=0}^{\mathbf{k}-1}\beta_{\mathbf{k}-i}e^{\mu_{\mathbf{k}-i}z}\right)\left(1+\frac{\sum_{j=0}^{\mathbf{l}-1}\gamma_{\mathbf{l}-j}e^{-\nu_{\mathbf{l}-j}z}}{\sum_{i=0}^{\mathbf{k}-1}\beta_{\mathbf{k}-i}e^{\mu_{\mathbf{k}-i}z}}\right).
\end{equation}
For any $\theta\in(\theta_1,\theta_2)$ such that the ray $z=re^{\mathbf{i}\theta}$ meets finitely many disks in $\mathcal{R}$, by \eqref{Gro-estimate-1}, \eqref{Gro-estimate-2} and Lemma~\ref{growthlemma}, we deduce that $|\mathbf{w}(z)|\leq \exp(r^{\sigma_{0}+\varepsilon})$ for all $z$ on the ray with large $r$. By Lemma~\ref{growthlemma}, we have $\sum_{i=0}^{\mathbf{k}-1}\beta_{\mathbf{k}-i}e^{\mu_{\mathbf{k}-i}z}=g(z)^n[1+o(1)]$ as $z\to\infty$ along the ray. Note that $\sum_{j=0}^{\mathbf{l}-1}\gamma_{\mathbf{l}-j}e^{-\nu_{\mathbf{l}-j}z}=o(1)$ as $z\to\infty$ along the ray and also that $\mathbf{w}(z)/g(z)=o(1)$ as $z\to\infty$ along the ray. Thus, when $|z|=r$ is large, we may take the \emph{n}-th roots of the meromorphic function on either side of equation \eqref{EQ1-algorim-3} in a small neighbourhood of the ray and fix one branch of them respectively. This gives
\begin{equation*}
g(z)\left(1+\frac{\mathbf{w}(z)}{g(z)}\right)^{1/n}=\left(\sum_{i=0}^{\mathbf{k}-1}\beta_{\mathbf{k}-i}e^{\mu_{\mathbf{k}-i}z}\right)^{1/n}\left[1+\frac{o(1)}{g(z)^n}\right]^{1/n}
\end{equation*}
as $z\to\infty$ along the ray and so, using Taylor's expansion of $(1+x)^{1/n}$ around the origin,
\begin{equation*}
\begin{split}
g(z)+\mathbf{w}_1(z)=\left(\sum_{i=0}^{\mathbf{k}-1}\beta_{\mathbf{k}-i}e^{\mu_{\mathbf{k}-i}z}\right)^{1/n}=\beta_{\mathbf{k}}^{1/n}e^{\frac{\mu_{\mathbf{k}}}{n}z}\left[1+\sum_{i=1}^{\mathbf{k}-1}\frac{\beta_{\mathbf{k}-i}}{\beta_{\mathbf{k}}}e^{(\mu_{\mathbf{k}-i}-\mu_{\mathbf{k}})z}\right]^{1/n},
\end{split}
\end{equation*}
where $\mathbf{w}_1$ satisfies $|\mathbf{w}_1(z)|=O(\exp(r^{\sigma_{0}+\varepsilon}))$ as $z\to\infty$ along the ray. Again, using Taylor's expansion of $(1+x)^{1/n}$ around the origin, we have
\begin{equation}\label{EQ1-algorim-4-cal}
\begin{split}
\beta_{\mathbf{k}}^{1/n}e^{\frac{\mu_{\mathbf{k}}}{n}z}\left[1+\frac{1}{n}\sum_{i=1}^{\mathbf{k}-1}\frac{\beta_{\mathbf{k}-i}}{\beta_{\mathbf{k}}}e^{(\mu_{\mathbf{k}-i}-\mu_{\mathbf{k}})z}+\cdots\right]=\sum_{i=0}^{\infty}\zeta_{i}e^{\frac{\mu_{\mathbf{k}}-\varsigma_i}{n}z}
\end{split}
\end{equation}
for all $z$ on the ray with large $r$, where $\zeta_0,\zeta_1,\cdots$ and $\varsigma_0,\varsigma_1,\cdots$ are constants such that $\zeta_{0}^n=\beta_{\mathbf{k}}$ and $\varsigma_0=0$. Thus, we have
\begin{equation}\label{EQ1-algorim-4}
\begin{split}
g(z)+\mathbf{w}_1(z)=\sum_{i=0}^{\infty}\zeta_{i}e^{\frac{\mu_{\mathbf{k}}-\varsigma_i}{n}z}
\end{split}
\end{equation}
for all $z$ on the ray with large $r$. By composing $g$ with a suitable rotation, if necessary, we may suppose that $\arg((\mu_{\mathbf{k}}-\varsigma_i)/n)\neq \pm\pi/2$ for all $i=0,1,\cdots$. We let $\mathbf{m}\geq 0$ be the biggest integer such that $\arg((\mu_{\mathbf{k}}-\varsigma_{\mathbf{m}})/n)\in (-\pi/2,\pi/2)$.

When $\gamma_{\mathbf{l}}$, $\cdots$, $\gamma_1$ are all zero, for each group of fixed $\zeta_{0}$, $\cdots$, $\zeta_{\mathbf{m}}$, we define the entire function $c$ by
\begin{equation}\label{EQ1-def-small-1}
\begin{split}
c(z)=g-\sum_{i=0}^{\mathbf{m}}\zeta_{i}e^{\frac{\mu_{\mathbf{k}}-\varsigma_i}{n}z}.
\end{split}
\end{equation}
By Lemma~\ref{orderlemma} and \eqref{EQ1-def-small-1}, we see that $\sigma(c)\leq 1$. By \eqref{EQ1-algorim-4-cal} and \eqref{EQ1-algorim-4}, we may suitably choose the number $\theta_2$ with a smaller value so that, for any $\theta\in(\theta_1,\theta_2)$ such that the ray $z=re^{\mathbf{i}\theta}$ meets finitely many disks in $\mathcal{R}$, we have $|c(z)|=O(\exp(r^{\sigma_{0}+\varepsilon}))$ for all $z$ on the ray with large $r$. Moreover, by Lemma~\ref{growthlemma}, for any $\theta\in(\theta_3,\theta_4)$ such that the ray $z=re^{\mathbf{i}\theta}$ meets finitely many disks in $\mathcal{R}$, we obtain from \eqref{EQ1-def-small-1} that $|c(z)|\leq r^{N+1}$ for some integer $N$ and all $z$ on the ray with large $r$. Thus, by Lemma~\ref{smallfunctionlemma} we have $\sigma(c)\leq \sigma_0$.

When $\gamma_{\mathbf{l}}$, $\cdots$, $\gamma_1$ are all nonzero, for any $\theta\in(\theta_3,\theta_4)$ such that the ray $z=re^{\mathbf{i}\theta}$ meets finitely many disks in $\mathcal{R}$, by \eqref{Gro-estimate-1}, \eqref{Gro-estimate-2} and Lemma~\ref{growthlemma}, we deduce that $|\mathbf{w}(z)|\leq \exp(r^{\sigma_{0}+\varepsilon})$ for all $z$ on the ray with large $r$. Moreover, using Taylor's expansion of $(1+x)^{1/n}$ around the origin, we have
\begin{equation}\label{EQ1-algorim-5-cal}
\begin{split}
\gamma_{\mathbf{l}}^{1/n}e^{-\frac{\nu_{\mathbf{l}}}{n}z}\left[1+\frac{1}{n}\sum_{j=1}^{\mathbf{l}-1}\frac{\gamma_{\mathbf{l}-j}}{\gamma_{\mathbf{l}}}e^{-(\nu_{\mathbf{l}-j}-\nu_{\mathbf{l}})z}+\cdots\right]=\sum_{j=0}^{\infty}\eta_{j}e^{-\frac{\nu_{\mathbf{l}}-\tau_j}{n}z}
\end{split}
\end{equation}
for all $z$ on the ray with large $r$, where $\eta_0,\eta_1,\cdots$ and $\tau_0,\tau_1,\cdots$ are constants such that $\eta_{0}^n=\gamma_{\mathbf{l}}$ and $\tau_0=0$. Then, by similar arguments as before, we have
\begin{equation}\label{EQ1-algorim-5}
\begin{split}
g(z)+\mathbf{w}_2(z)=\sum_{j=0}^{\infty}\eta_{j}e^{-\frac{\nu_{\mathbf{l}}-\tau_j}{n}z}
\end{split}
\end{equation}
for all $z$ on the ray with large $r$, where $\mathbf{w}_2$ satisfies $|\mathbf{w}_2(z)|=O(\exp(r^{\sigma_{0}+\varepsilon}))$ as $z\to\infty$ along the ray. By composing $g$ with a suitable rotation, if necessary, we may suppose that $\arg((\nu_{\mathbf{l}}-\tau_j)/n)\neq \pm\pi/2$ for all $j=0,1,\cdots$ and at the same time that $\arg((\mu_{\mathbf{k}}-\varsigma_i)/n)\neq \pm\pi/2$ for all $i=0,1,\cdots$ in \eqref{EQ1-algorim-4}. We let $\mathbf{n}\geq 0$ be the biggest integer such that $\arg((\nu_{\mathbf{l}}-\tau_{\mathbf{n}})/n)\in (-\pi/2,\pi/2)$.

Now, for each fixed group of $\zeta_{0}$, $\cdots$, $\zeta_{\mathbf{m}}$ and each fixed group of $\eta_{0}$, $\cdots$, $\eta_{\mathbf{n}}$, we define the entire function $c$ by
\begin{equation}\label{EQ1-def-small-2}
\begin{split}
c(z)=g(z)-\sum_{i=0}^{\mathbf{m}}\zeta_{i}e^{\frac{\mu_{\mathbf{k}}-\varsigma_i}{n}z}-\sum_{j=0}^{\mathbf{n}}\eta_{j}e^{-\frac{\nu_{\mathbf{l}}-\tau_j}{n}z}.
\end{split}
\end{equation}
By Lemma~\ref{orderlemma} and \eqref{EQ1-def-small-2}, we see that $\sigma(c)\leq 1$. By \eqref{EQ1-algorim-4-cal}, \eqref{EQ1-algorim-4}, \eqref{EQ1-algorim-5-cal} and \eqref{EQ1-algorim-5}, we may suitably choose the number $\theta_2$ with a smaller value so that, for any $\theta\in(\theta_1,\theta_2)\cup (\theta_3,\theta_4)$ such that the ray $z=re^{\mathbf{i}\theta}$ meets finitely many disks in $\mathcal{R}$, we have $|c(z)|=O(\exp(r^{\sigma_{0}+\varepsilon}))$ for all $z$ on the ray with large $r$. Thus, by Lemma~\ref{smallfunctionlemma} we have $\sigma(c)\leq \sigma_{0}$. This completes the proof.

\subsection{Part II: Proof of Theorem~\ref{maintheorem1}}\label{subse2: proof of maintheorem1}

Without loss of generality, we may suppose that $f(0)\neq0$. Since $\sigma(f)=\infty$, we may write $f$ as Hadamard's factorization $f=\kappa e^{\mathbf{h}}$, where $\mathbf{h}$ is a transcendental entire function and $\kappa$ is the canonical product formed by the zeros of $f$ such that $\sigma(\kappa)=\lambda(\kappa)<\infty$. By~\cite[Lemma~8.6]{Laine1993} we have
\begin{equation}\label{bank-laine0-derivative-1}
f^{(k)}=\left\{\kappa^{(k)}+k\mathbf{h}'\kappa^{(k-1)}+\sum_{j=2}^{k}\left[\binom{k}{j}(\mathbf{h}')^{j}+P_{k,j-1}(\mathbf{h}')\right]\kappa^{(k-j)}\right\}e^{\mathbf{h}},\ k=1,\cdots,n,
\end{equation}
where $P_{k,j-1}(\mathbf{h}')$ is a differential polynomial in $\mathbf{h}'$ of degree at most $j-1$ with constant coefficients. It follows that
\begin{equation}\label{bank-laine0-derivative-2}
\frac{f^{(k)}}{f}=\frac{\kappa^{(k)}}{\kappa}+k\mathbf{h}'\frac{\kappa^{(k-1)}}{\kappa}+\sum_{j=2}^{k}\left[\binom{k}{j}(\mathbf{h}')^{j}+P_{k,j-1}(\mathbf{h}')\right]\frac{\kappa^{(k-j)}}{\kappa}, \ k=1,\cdots,n.
\end{equation}
Note that $P_{k,k-1}(\mathbf{h}')=0$. By submitting the solution $f=\kappa e^{\mathbf{h}}$ into equation~\eqref{higher order-0} together with \eqref{bank-laine0-derivative-2}, we get a Tumura--Clunie type differential equation
\begin{equation}\label{bank-laine0-tumura-clunie}
(\mathbf{h}')^n+P(z,\mathbf{h}')=K(e^{z},e^{-\varrho z}),
\end{equation}
where $P(z,\mathbf{h}')$ is a differential polynomial in $\mathbf{h}'$ of degree~$n-1$. By the proof of Theorem~\ref{maintheorem5} together with slight modifications of the arguments in Lemmas~\ref{orderlemma}--\ref{smallfunctionlemma}, we see that
\begin{equation}\label{bank-laine0-tumura-clunie-1}
\mathbf{h}'(z)=c(z)+\sum_{i=0}^{\mathbf{k}-1}\zeta_{i}e^{\frac{\mathbf{k}-i}{n}z}+\sum_{j=0}^{\mathbf{l}-1}\eta_{j}e^{-\varrho\frac{\mathbf{l}-j}{n}z},
\end{equation}
where $c$ is now a polynomial and $\zeta_{0},\cdots,\zeta_{\mathbf{k}}$ and $\eta_{0},\cdots,\eta_{\mathbf{l}}$ are constants such that $\zeta_{0}^2=b_{\mathbf{k}}$ and $\eta_{0}^2=b_{-\mathbf{l}}$.

When $b_{-\mathbf{l}}$, $\cdots$, $b_{-1}$ are all nonzero and $\varrho\neq 1$, we consider the case when $\arg(\varrho)=\theta_0\in(0,\pi)$. The case when $\arg(\varrho)=\theta_0\in(-\pi,0)$ is similar and will be omitted. Denote $g=\mathbf{h}'-c$ and denote the first and the second exponential sum in \eqref{bank-laine0-tumura-clunie-1} by $g_1$ and $g_2$ respectively. Then from \eqref{bank-laine0-tumura-clunie} we have
\begin{equation}\label{bank-laine0-tumura-clunie-2}
g_1^n+\sum_{i=1}^{n-1}\binom{n}{i}g_1^{n-i}g_2^i+g_2^n+Q(z,g)=K(e^{z},e^{-\varrho z}),
\end{equation}
where $Q(z,g)$ is a differential polynomial in $g$ of degree~$n-1$. For any $\theta\in (-\pi/2,\pi/2-\theta_0)$ such that the ray $z=re^{\mathbf{i}\theta}$ meets finitely many disks in $\mathcal{R}$, we see that $g_1(z)\to\infty$ and $g_2(z)\to 0$ as $z\to\infty$ along the ray. Moreover, by \eqref{Gro-estimate-2} and Lemma~\ref{growthlemma}, we have $|Q(z,g)|\leq |z|^{N}|g_1(z)^{n-1}|$ for some integer $N$ and all $z$ on the ray with large $r$. Then we obtain from \eqref{bank-laine0-tumura-clunie-2} that the constants $b_{\mathbf{k}}$, $\cdots$, $b_{1}$ should satisfy
\begin{equation}\label{bank-laine0-tumura-clunie-3}
G_1(z):=\sum_{i=0}^{\mathbf{k}-1}b_{\mathbf{k}-i}e^{(\mathbf{k}-i)z}-g_1(z)^n=O\left(g_1(z)^{n-1}\right)
\end{equation}
as $z\to\infty$ along the ray $z=re^{\mathbf{i}\theta}$, $\theta\in(-\pi/2,\pi/2)$. Similarly, for any $\theta\in (\pi/2,3\pi/2-\theta_0)$ such that the ray $z=re^{\mathbf{i}\theta}$ meets finitely many disks in $\mathcal{R}$, we see that $g_1(z)\to0$ and $g_2(z)\to \infty$ as $z\to\infty$ along the ray and also that $|Q(z,g)|\leq |z|^{N}|g_2(z)^{n-1}|$ for some integer $N$ and all $z$ on the ray with large $r$. Then we obtain from \eqref{bank-laine0-tumura-clunie-2} that the constants $b_{-\mathbf{l}}$, $\cdots$, $b_{-1}$ should satisfy
\begin{equation}\label{bank-laine0-tumura-clunie-4}
G_2(z):=\sum_{j=0}^{\mathbf{l}-1}b_{j-\mathbf{l}}e^{-\varrho(\mathbf{l}-j)z}-g_2(z)^n=O\left(g_2(z)^{n-1}\right)
\end{equation}
as $z\to\infty$ along the ray $z=re^{\mathbf{i}\theta}$, $\theta\in(\pi/2-\theta_0,3\pi/2-\theta_0)$. Now, for a small $\varepsilon_1>0$, we choose one $\theta\in (\pi/2-\theta_0,\pi/2-\theta_0+\varepsilon_1)$ so that the ray $z=re^{\mathbf{i}\theta}$ meets finitely many disks in $\mathcal{R}$ and $g_1(z)\to \infty$, $g_2(z)\to \infty$ and $g_2(z)/g_1(z)\to 0$ as $z\to\infty$ along the ray. We rewrite equation \eqref{bank-laine0-tumura-clunie-2} as
\begin{equation}\label{bank-laine0-tumura-clunie-5}
1+\sum_{i=2}^{n-1}\binom{n}{i}\left(\frac{g_2}{g_1}\right)^{i-1}+\frac{Q(z,g)}{g_1^{n-1}g_2}-\frac{G_1+G_2}{g_1^{n-1}g_2}=0.
\end{equation}
Note that $|Q(z,g)|\leq |z|^{N}|g_1(z)^{n-1}|$ for some integer $N$ and all $z$ on this ray with large $r$. Together with \eqref{bank-laine0-tumura-clunie-3} and \eqref{bank-laine0-tumura-clunie-4}, we obtain from equation \eqref{bank-laine0-tumura-clunie-5} that $1+o(1)=0$ as $z\to\infty$ along the ray, which is impossible. Thus we must have $\varrho=1$.

When $\varrho=1$, we have $\theta_1=-\pi/2$, $\theta_2=\theta_3=\pi/2$ and $\theta_4=3\pi/2$ in $\mathcal{A}_{1,\epsilon}$ and $\mathcal{A}_{2,\epsilon}$ in \eqref{sectors-1} for the exponential polynomial in \eqref{higher order-0}. Denote $\kappa_c=\kappa e^{\int c dz}$ and $h=\mathbf{h}-\int c dz$. Then $f=\kappa_c e^{h}$ and we obtain from \eqref{higher order-0} that $\kappa_c$ satisfies the linear differential equation
\begin{equation}\label{second tumura-clunie-1}
\kappa_c^{(n)}+Q_{1}(h')\kappa_c^{(n-1)}+\cdots+Q_{n-1}(h')\kappa_c'-\left[K(e^{z},e^{-z})-Q_{n}(h')\right]\kappa_c=0,
\end{equation}
where $Q_{i}(h')$, $i=0,\cdots,n-1$ are differential polynomials in $h'$ of degree~$i$ with constant coefficients. We write $K(e^{z},e^{-z})-Q_{n}(h')=Q_{n,1}(e^{z/n})+Q_{n,2}(e^{-z/n})$, where $Q_{n,1}$ and $Q_{n,2}$ are two polynomials, and also write $Q_{i}(h')=Q_{i,1}(e^{z/n})+Q_{i,2}(e^{-z/n})$, $i=1,\cdots,n-1$, where $Q_{i,1}$ and $Q_{i,2}$ are two polynomials.

We need to extend the results of Wittich~\cite{Wittich1967} to show that $c$ is constant and $\kappa=\psi(e^{z/n})$ for some polynomial $\psi$ with simple roots only. Rewrite equation \eqref{second tumura-clunie-1} as
\begin{equation}\label{second tumura-clunie-2}
\frac{\kappa_c'}{\kappa_c}=\frac{K(e^z,e^{-z})-Q_{n}(h')}{Q_{n-1}(h')}-\cdots-\frac{Q_{1}(h')}{Q_{n-1}(h')}\frac{\kappa_c^{(n-1)}}{\kappa_c}-\frac{1}{Q_{n-1}(h')}\frac{\kappa_c^{(n)}}{\kappa_c}.
\end{equation}
Note that $\sigma(\kappa_c)<\infty$. For any $\theta\in(-\pi/2,\pi/2)$ such that the ray $z=re^{\mathbf{i}\theta}$ meets finitely many disks in $\mathcal{R}$, we may use \eqref{Gro-estimate-2} to estimate the modulus of the terms on both sides of equation \eqref{second tumura-clunie-2} along the ray and deduce that the degree of $Q_{n,1}$ should be $\leq n-1$. Then we have
\begin{equation}\label{second tumura-clunie-3}
\frac{\kappa_c'(z)}{\kappa_c(z)}=\mathbf{c}_1+\delta_1(z),
\end{equation}
where $\mathbf{c}_1$ is a constant and $\delta_1(z)$ satisfies $|\delta_1(z)|\leq |e^{-z/2n}|$ for all $z$ on the ray with large $r$. By the remark below Theorem~\ref{maintheorem5}, we see that the constant $\mathbf{c}_1$ is only dependent on $b_{\mathbf{k}}$, $\cdots$, $b_{-\mathbf{l}}$ in \eqref{bank-laine0-fu-0}. Let $z_1$ be a point on the ray $z=re^{\mathbf{i}\theta}$ such that $\kappa_c(z_1)\not=0$ and the line from $z_1$ to $z$ meets no disks in $\mathcal{R}$. Fix the principle branch $\ln \kappa_c$ around the ray. By integration along the line from $z_1$ to $z$ on both sides of equation \eqref{second tumura-clunie-3}, we have
\begin{equation*}
\ln \kappa_c(z)-\ln \kappa_c(z_1)=\mathbf{c}_1z+\int_{z}^z\delta_1(t)dt=\mathbf{c}_1z+\int_{z_1}^{\infty}\delta_1(t)dt-\int_{z}^{\infty}\delta_1(t)dt.
\end{equation*}
Note that $\int_{z_1}^{\infty}\delta_1(t)dt$ converges to a constant, say $\mathbf{c}_0$, and $|\int_{z}^{\infty}\delta_1(t)dt|\leq |e^{-z/4n}|$ for all $z$ on the ray with large $r$. Denote $a_1=\kappa_c(z_1)e^{\mathbf{c}_0}$. Then $a_1\not=0$ and we have
$\kappa_c=a_1e^{\mathbf{c}_1z}[1+o(1)]$ as $z\to\infty$ along the ray. Denote $\Psi=\kappa_ce^{-\mathbf{c}_1z}$. By applying the Phragm\'{e}n--Lindel\"{o}f theorem to $\Psi$ in $\overline{\mathcal{A}}_{1,\epsilon}$, we conclude that
\begin{equation*}
\begin{split}
\Psi(z)\to a_1
\end{split}
\end{equation*}
uniformly as $z\to\infty$ in $\mathcal{A}_{1,\epsilon}$. Moreover, now $\Psi$ satisfies the linear differential equation
\begin{equation}\label{second tumura-clunie-8}
\begin{split}
\Psi^{(n)}&+Q_{1}(h'+\mathbf{c}_1)\Psi^{(n-1)}+\cdots\\
&+Q_{n-1}(h'+\mathbf{c}_1)\Psi'-\left[K(e^z,e^{-z})-Q_{n}(h'+\mathbf{c}_1)\right]\Psi=0.
\end{split}
\end{equation}
By similar arguments as above, there is a constant $a_2\neq0$ such that
\begin{equation}\label{second tumura-clunie-8-add}
\begin{split}
\Psi(nz)\to a_2
\end{split}
\end{equation}
uniformly as $z\to\infty$ in $\mathcal{A}_{1,\epsilon}$. Recall that $\kappa_c=\kappa e^{\int c dz}$ and $h=\mathbf{h}-\int c dz$. To complete the proof, below we shall show that $\Psi$ is a polynomial in $e^{-z/n}$. Since $\sigma(\kappa)=\lambda(\kappa)<\infty$, this will yield that $c$ is constant.

We note that the entire function $\Phi(z)=\Psi(z+2n\pi \mathbf{i})$ is also a nonzero solution of equation \eqref{second tumura-clunie-8}. By similar arguments as before, there is a constant $a_{3}\neq 0$ such that $\Phi(z)\to a_{3}$ uniformly as $z\to\infty$ in $\mathcal{A}_{1,\epsilon}$. If $a_1\Phi\not\equiv a_{3}\Psi$, then $a_1\Phi-a_{3}\Psi$ is also a nonzero solution of equation \eqref{second tumura-clunie-8}. However, then there is a constant $a_{4}\neq 0$ such that $a_1\Phi(z)-a_{3}\Psi(z)\to a_{4}$ uniformly as $z\to\infty$ in $\mathcal{A}_{1,\epsilon}$, which is impossible. Thus $a_1\Phi\equiv a_{3}\Psi$. We write $\Psi(nz)=e^{\mathbf{c}z}w(z)$, where $\mathbf{c}$ is a constant and $w$ is an entire periodic function of period $2\pi \mathbf{i}$. Let $x=e^z$. Then $w(z)=u(x)$ for a function $u$ which is analytic in $\mathbb{C}-\{0\}$. Moreover, $u$ has the Laurent series $u(x)=\sum_{i=-\infty}^{\infty}\alpha_ix^i$.

Note that $w(z)=e^{-\mathbf{c}z}\Psi(nz)$. Then we may choose a ray $z=re^{\mathbf{i}\theta}$ in $\mathcal{A}_{1,\epsilon}$ so that $x=e^{z}\to\infty$ as $z\to\infty$ along the ray and $u$ has a finite limit or tends to $\infty$. Thus $\infty$ is a removable singularity or a pole of $u$, i.e., $u(x)=\sum_{i=-\infty}^{m}\alpha_ix^i$ for some integer $m\geq 0$ and $\alpha_{m}\neq 0$. Now $\Psi(nz)=e^{(\mathbf{c}+m)z}(\sum_{i=-\infty}^{m}\alpha_ie^{(i-m)z})$. If $\mathbf{c}+m\neq0$, then for a small $\epsilon>0$, we may choose a ray in $\mathcal{A}_{1,\epsilon}$ so that $\Psi(nz)\to 0$ or $\Psi(nz)\to\infty$ as $z\to\infty$ along the ray, a contradiction to \eqref{second tumura-clunie-8-add}. Thus $\mathbf{c}+m=0$. We may suppose that $\mathbf{c}=m=0$.

When $\gamma_{\mathbf{l}}$, $\cdots$, $\gamma_1$ are all zero, letting $t=e^{-z/n}$, we may write $\Psi(z)=v(t)$ with an entire function $v$. Then we obtain from \eqref{second tumura-clunie-8} that $v$ satisfies the linear differential equation
\begin{equation}\label{second tumura-clunie-9}
t^nv^{(n)}+S_{n-1}(t,t^{-1})v^{(n-1)}+\cdots+S_{1}(t,t^{-1})v'+S_{0}(t,t^{-1})v=0,
\end{equation}
where $S_{i}(t,t^{-1})=S_{i,1}(t)+S_{i,2}(t^{-1})$, $i=0,\cdots,n-1$, $S_{i,1}$ is a polynomial of degree $\leq i$ and $S_{i,2}$ is a polynomial of degree $\leq n-i$ respectively. With a standard application of the Wiman--Valiron theory to $v$ as in \cite[p.~59]{Laine1993}, we deduce from \eqref{second tumura-clunie-9} that the \emph{central index} $\nu(r)=\nu(r,v)$ is bounded as $r\to\infty$. Thus $v$ is a polynomial.

When $\gamma_{\mathbf{l}}$, $\cdots$, $\gamma_1$ are all nonzero, by similar arguments as before we may also show that the degree of $Q_{n,2}$ should be $\leq n-1$ and there are two constants $\mathbf{d}_1$ and $b_1\not=0$ such that
\begin{equation*}
\begin{split}
\Psi(nz)=b_1e^{\mathbf{d}_1z}[1+o(1)]
\end{split}
\end{equation*}
uniformly as $z\to\infty$ in $\mathcal{A}_{2,\epsilon}$. We may choose a ray $z=re^{\mathbf{i}\theta}$ in $\mathcal{A}_{2,\epsilon}$ so that $x=e^{z}\to 0$ as $z\to\infty$ along the ray and $u$ has a finite limit or tends to $\infty$. Thus $0$ is a removable singularity or a pole of $u$, i.e., $\Psi(nz)=\sum_{i=-k}^{0}\alpha_ie^{iz}$ for some integer $k\geq 0$. This completes the proof.

\section{Proving Theorem~\ref{maintheorem4} using Kovacic's algorithms}\label{se3: Kovacic's algorithms for Hill equation}

By doing the change of variable $z\to 2z$, if necessary, we may assume that the integer $\mathbf{k}$ in equation~\eqref{bank-laine0002} is even. Thus in this section we shall assume that $\mathbf{k}$ and $\mathbf{s}$ are relatively prime or $\mathbf{k}/2$ and $\mathbf{s}/2$ are relatively prime and $\mathbf{k}/2$ is odd. Write $\mathbf{k}=2(\mathbf{m}+1)$ for some integer $\mathbf{m}\geq 0$ and $\mathbf{s}=2(\mathbf{m}+1)-s$ for an integer $s\geq 1$.

By assumption, we may write the nonzero non-oscillatory solution $f$ in the form $f=\kappa e^{cz}e^{h}$, where $\kappa(z)=\psi(e^z)$ for a polynomial $\psi$ with simple roots only, $h(z)=\chi(e^z)$ for a (Laurent) polynomial $\chi$. Moreover, $\psi(z)=\sum_{j=0}^ka_jz^{j}$ with $a_1=\cdots=a_{k-1}=0$. We also write $f=\kappa_ce^{h}$, where $\kappa_c=\kappa e^{cz}$. Denoting $g=h'$, we have from equation \eqref{bank-laine0002} that
\begin{equation}\label{tumura-clunie-particular-1}
g^2+g'+2\frac{\kappa_c'}{\kappa_c}g+\frac{\kappa_c''}{\kappa_c}=b_{\mathbf{k}}e^{\mathbf{k}z}+b_{\mathbf{s}}e^{\mathbf{s}z}+b_0.
\end{equation}
Note that $\kappa_c(z)=\sum_{j=0}^ka_je^{(c+j)z}$. We have
\begin{equation}\label{tumura-clunie-particular-2}
\begin{split}
\frac{\kappa_c'}{\kappa_c}=\frac{\sum_{j=0}^k(c+j)a_je^{jz}}{\sum_{j=0}^ka_je^{jz}}
\end{split}
\end{equation}
and
\begin{equation}\label{tumura-clunie-particular-3}
\begin{split}
\frac{\kappa_c''}{\kappa_c}=\frac{\sum_{j=0}^k(c^2+2jc+j^2)a_je^{jz}}{\sum_{j=0}^ka_je^{jz}}.
\end{split}
\end{equation}

\subsection{Step 1: Determining the polynomial $\chi$ and the constant $c$}\label{subse1: Step I of Kovacic's algorithm}

Note that $g$ is a polynomial in $e^z$ such that $g(z)\to 0$ as $z\to-\infty$. By letting $z\to-\infty$ in equation \eqref{tumura-clunie-particular-1} together with the two expressions in \eqref{tumura-clunie-particular-2} and \eqref{tumura-clunie-particular-3}, we find $b_0=c^2$.

Let $\mathbf{q}$ be the smallest integer such that $\mathbf{s}/\mathbf{k}\leq [2(\mathbf{q}+1)-1]/[2(\mathbf{q}+1)]$. Then, by Corollary~\ref{corollary1} we may write $g=\sum_{i=0}^{\mathbf{q}}\zeta_ie^{(\mathbf{m}+1-is)z}$, where $\zeta_0$, $\cdots$, $\zeta_{\mathbf{q}}$ are constants such that $\zeta_0^2=1$ when $\mathbf{q}=0$ and $\zeta_0^2=1$, $2\zeta_0\zeta_1=b_{\mathbf{s}}$ when $\mathbf{q}=1$ and $\zeta_0^2=1$, $2\zeta_0\zeta_1=b_{\mathbf{s}}$ and
\begin{equation*}
\begin{split}
A_{i}=\sum_{\substack{i_0+\cdots+i_{\mathbf{q}}=2,\\i_1+\cdots+\mathbf{q}i_{\mathbf{q}}=i}}\frac{2}{i_0!\cdots i_{\mathbf{q}}!}\zeta_0^{i_0}\cdots \zeta_{\mathbf{q}}^{i_{\mathbf{q}}}=0, \ i=2,\cdots,\mathbf{q}
\end{split}
\end{equation*}
where $\mathbf{q}\geq 2$. More precisely, when $\mathbf{q}\geq 2$, we have
\begin{equation*}
\begin{split}
A_{i}=2\zeta_0\zeta_i+2\zeta_1\zeta_{i-1}+\cdots+2\zeta_{(i-2)/2}\zeta_{(i+2)/2}+\zeta_{i/2}^2
\end{split}
\end{equation*}
if $2\leq i\leq \mathbf{q}$ is even, and
\begin{equation*}
\begin{split}
A_{i}=2\zeta_0\zeta_i+2\zeta_1\zeta_{i-1}+\cdots+2\zeta_{(i-1)/2}\zeta_{(i+1)/2}
\end{split}
\end{equation*}
if $2\leq i\leq \mathbf{q}$ is odd. Since $A_i=0$ for $i=2,\cdots,\mathbf{q}$, we then compute $\zeta_i$ and find
\begin{equation}\label{coeff1}
\begin{split}
\zeta_i=\frac{t_i\zeta_1^{i}}{(-2\zeta_0)^{i-1}}, \  i=1,\cdots,\mathbf{q},
\end{split}
\end{equation}
where $t_i$ are positive integers such that
\begin{equation}\label{coeff1-fuaf}
\begin{split}
t_1=t_2=1, \ t_3=2, \ t_4=5, \ t_5=14, \ t_6=42, \ t_7=132, \ t_8=401, \ t_9=1374, \ \cdots.
\end{split}
\end{equation}
In particular, we have
\begin{equation}\label{coeff2faf00}
\begin{split}
t_i=2t_1t_{i-1}+\cdots+2t_{(i-2)/2}t_{(i+2)/2}+t_{i/2}^2
\end{split}
\end{equation}
if $2\leq i\leq \mathbf{q}$ is even, and
\begin{equation}\label{coeff2faf100}
\begin{split}
t_i=2t_1t_{i-1}+\cdots+2t_{(i-3)/2}t_{(i+3)/2}+2t_{(i-1)/2}t_{(i+1)/2}
\end{split}
\end{equation}
if $2\leq i\leq \mathbf{q}$ is odd. We may also write out precise formulas for $A_{\mathbf{q}+i}$ for $i=1,\cdots,\mathbf{q}$. When $\mathbf{q}\geq 2$, we have
\begin{equation}\label{coeff2faf-fu1}
\begin{split}
A_{\mathbf{q}+i}=2\zeta_i\zeta_{\mathbf{q}}+2\zeta_{i+1}\zeta_{\mathbf{q}-1}+\cdots+2\zeta_{(\mathbf{q}+i-2)/2}\zeta_{(\mathbf{q}+i+2)/2}+\zeta_{(\mathbf{q}+i)/2}^2
\end{split}
\end{equation}
if $\mathbf{q}+i$ is even, and
\begin{equation}\label{coeff2faf1-fu2}
\begin{split}
A_{\mathbf{q}+i}=2\zeta_i\zeta_{\mathbf{q}}+2\zeta_{i+1}\zeta_{\mathbf{q}-1}+\cdots+2\zeta_{(\mathbf{q}+i-1)/2}\zeta_{(\mathbf{q}+i+1)/2}
\end{split}
\end{equation}
if $\mathbf{q}+i$ is odd. Then, in both of the two cases where $\mathbf{q}=1$ and $\mathbf{q}\geq 2$, by \eqref{coeff1} we easily obtain
\begin{equation}\label{coeff2}
\begin{split}
A_{\mathbf{q}+i}=\sum_{\substack{i_0+\cdots+i_{\mathbf{q}}=2,\\i_1+\cdots+\mathbf{q}i_{\mathbf{q}}=\mathbf{q}+i}}\frac{2}{i_0!\cdots i_{\mathbf{q}}!}\zeta_0^{i_0}\cdots \zeta_{\mathbf{q}}^{i_{\mathbf{q}}}=(-1)^{\mathbf{q}+i-2}\frac{T_{\mathbf{q}+i}\zeta_1^{\mathbf{q}+i}}{(2\zeta_0)^{\mathbf{q}+i-2}}, \  i=1,\cdots,\mathbf{q},
\end{split}
\end{equation}
where $T_{\mathbf{q}+i}$ are positive integers such that $T_3=2$, $T_4=1$ when $\mathbf{q}=2$, and $T_{4}=5$, $T_{5}=4$, $T_{6}=4$ when $\mathbf{q}=3$, and $T_5=14$, $T_6=14$, $T_7=20$, $T_8=25$ when $\mathbf{q}=4$. Note that $t_i$ and $T_{\mathbf{q}+i}$ all depend only on $\mathbf{q}$. We see that $T_{\mathbf{q}+2}<T_{\mathbf{q}+1}$ when $\mathbf{q}\leq 3$. However, when $\mathbf{q}\geq 4$, we have the following

\begin{lemma}\label{lemma  solution0pre1}
Let $\mathbf{q}\geq 4$ be an integer. For the integers $t_i$ and $T_{\mathbf{q}+i}$, we have
\begin{equation}\label{coqnfoiag}
\begin{split}
\frac{t_{\mathbf{q}}}{t_{\mathbf{q}-1}}\geq 2, \ \frac{t_{\mathbf{q}}}{t_{\mathbf{q}-1}}\leq \frac{\mathbf{q}+2}{2} \ \text{and} \ \frac{T_{\mathbf{q}+2}}{T_{\mathbf{q}+1}}\geq 1, \ \frac{T_{\mathbf{q}+3}}{T_{\mathbf{q}+2}}\leq \frac{\mathbf{q}+7}{4}.
\end{split}
\end{equation}
\end{lemma}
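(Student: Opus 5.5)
The plan is to identify the integers $t_i$ and $T_{\mathbf{q}+i}$ with Catalan numbers $C_m=\frac{1}{m+1}\binom{2m}{m}$. Once that is done, the four inequalities become explicit rational inequalities in $\mathbf{q}$.

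\emph{Step 1: $t_i$ are Catalan numbers.} The recursions \eqref{coeff2faf00} and \eqref{coeff2faf100} can be written uniformly as $t_i=\sum_{j=1}^{i-1}t_jt_{i-j}$ for $i\geq 2$, with $t_1=1$. This is the Catalan convolution $C_{m+1}=\sum_{a+b=m}C_aC_b$ after the shift $t_i=C_{i-1}$, and it matches $1,1,2,5,14,42,132$. I would note that the listed values $t_8,t_9$ in \eqref{coeff1-fuaf} should read $429,1430$; the proof only uses the recursion.

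The first two claims of \eqref{coqnfoiag} then follow from $C_{m+1}/C_m=\rho_m:=2(2m+1)/(m+2)$. Indeed $t_{\mathbf{q}}/t_{\mathbf{q}-1}=\rho_{\mathbf{q}-2}=(4\mathbf{q}-6)/\mathbf{q}$. This is $\geq 2$ if and only if $\mathbf{q}\geq 3$. It is $\leq(\mathbf{q}+2)/2$ if and only if $\mathbf{q}^2-6\mathbf{q}+12\geq 0$, which holds always because the discriminant is negative.

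\emph{Step 2: closed forms for $T_{\mathbf{q}+i}$.} Substituting \eqref{coeff1} into \eqref{coeff2faf-fu1} and \eqref{coeff2faf1-fu2}, every product $\zeta_j\zeta_l$ with $j+l=\mathbf{q}+i$ equals $t_jt_l\zeta_1^{\mathbf{q}+i}/(-2\zeta_0)^{\mathbf{q}+i-2}$. Hence
\[
T_{\mathbf{q}+i}=\sum_{j=i}^{\mathbf{q}}t_jt_{\mathbf{q}+i-j}=\sum_{a=i-1}^{\mathbf{q}-1}C_aC_{\mathbf{q}+i-2-a}.
\]
This can be checked against the listed values, for example $T_5=14,T_6=14,T_7=20,T_8=25$ for $\mathbf{q}=4$. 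Comparing with the full convolution $C_{\mathbf{q}+i-1}=\sum_{a=0}^{\mathbf{q}+i-2}C_aC_{\mathbf{q}+i-2-a}$ and removing the missing end terms gives
\[
T_{\mathbf{q}+1}=C_{\mathbf{q}},\quad T_{\mathbf{q}+2}=C_{\mathbf{q}+1}-2C_{\mathbf{q}},\quad T_{\mathbf{q}+3}=C_{\mathbf{q}+2}-2C_{\mathbf{q}+1}-2C_{\mathbf{q}}.
\]

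\emph{Step 3: the $T$-inequalities.} The first is $T_{\mathbf{q}+2}/T_{\mathbf{q}+1}=\rho_{\mathbf{q}}-2\geq1$, equivalent to $4\mathbf{q}+2\geq 3\mathbf{q}+6$, i.e. $\mathbf{q}\geq 4$. This is exactly where the hypothesis $\mathbf{q}\geq4$ enters.

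For the second, $T_{\mathbf{q}+3}/T_{\mathbf{q}+2}=(\rho_{\mathbf{q}}\rho_{\mathbf{q}+1}-2\rho_{\mathbf{q}}-2)/(\rho_{\mathbf{q}}-2)$, a rational function of $\mathbf{q}$. It tends to $3$ as $\mathbf{q}\to\infty$, and equals $25/20$ at $\mathbf{q}=4$. I will clear the positive denominators $(\mathbf{q}+2)(\mathbf{q}+3)(\rho_{\mathbf{q}}-2)$ and reduce the claim $\leq(\mathbf{q}+7)/4$ to a polynomial inequality in $\mathbf{q}$. I expect that polynomial to be at most cubic with nonnegative values for $\mathbf{q}\geq 4$, which I would show by expanding around $\mathbf{q}=4$ and checking the coefficients.

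This last algebraic verification is the only step with real computation. If the expansion turns out awkward, the fallback is a cruder route: bound the ratio by $3$ for large $\mathbf{q}$ using $\rho_m<4$ and monotonicity, note $3\leq(\mathbf{q}+7)/4$ once $\mathbf{q}\geq5$, and check $\mathbf{q}=4$ directly.
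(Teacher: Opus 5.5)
Your argument is correct, and it takes a genuinely different route from the paper.

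\textbf{How the paper argues.} The paper never identifies the $t_i$. It proves $t_i/t_{i-1}\ge 2$ and $t_i/t_{i-1}\le (i+2)/2$ for every $3\le i\le\mathbf{q}$ by induction on the recursions \eqref{coeff2faf00}--\eqref{coeff2faf100}, comparing the sums for $t_k$ and $t_{k-1}$ term by term. It then gets the two $T$-inequalities by the same term-by-term pairing of the symmetric sums \eqref{coeff2faf-fu1}--\eqref{coeff2faf1-fu2}. For this it needs the ratio bounds at all indices up to $\mathbf{q}$, not only at $\mathbf{q}$.

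\textbf{What your route buys.} The identification $t_i=C_{i-1}$ and the closed forms $T_{\mathbf{q}+1}=C_{\mathbf{q}}$, $T_{\mathbf{q}+2}=C_{\mathbf{q}+1}-2C_{\mathbf{q}}$, $T_{\mathbf{q}+3}=C_{\mathbf{q}+2}-2C_{\mathbf{q}+1}-2C_{\mathbf{q}}$ replace all of that with exact values. They show that the third inequality is an equality at $\mathbf{q}=4$, so the hypothesis is sharp there. They also expose the misprints $t_8=401$, $t_9=1374$, which should be $429$ and $1430$. The paper's route stays entirely inside the recursion and needs no outside identification. Its write-up, however, is messier and contains slips: its final conclusions are stated as $T_{\mathbf{q}+2}/T_{\mathbf{q}}$ and $T_{\mathbf{q}+3}/T_{\mathbf{q}+1}$.

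\textbf{The computation you left open.} It does close. Clearing denominators gives
\[
\frac{T_{\mathbf{q}+3}}{T_{\mathbf{q}+2}}=\frac{3(\mathbf{q}-2)(\mathbf{q}+1)}{(\mathbf{q}+3)(\mathbf{q}-1)} .
\]
This is $<3$ for all $\mathbf{q}\ge2$, because $(\mathbf{q}-2)(\mathbf{q}+1)<(\mathbf{q}+3)(\mathbf{q}-1)$ reduces to $3\mathbf{q}>1$. That makes your fallback immediate. For the direct route, the claim $\le(\mathbf{q}+7)/4$ is equivalent to
\[
\mathbf{q}^3-3\mathbf{q}^2+23\mathbf{q}+3\ge0,
\]
which clearly holds for $\mathbf{q}\ge4$.

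\textbf{One slip.} At $\mathbf{q}=4$ the ratio is $T_7/T_6=20/14=10/7$, not $25/20$; the value $25/20$ is $T_8/T_7$. This does not affect the argument, but correct it if you use the direct check at $\mathbf{q}=4$.
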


\begin{proof}
We prove the first two inequalities in \eqref{coqnfoiag} by mathematical induction. For the first inequality in \eqref{coqnfoiag}, when $3\leq \mathbf{q}\leq 9$, it is easy to see that the numbers in \eqref{coeff1-fuaf} satisfy $t_{i}/t_{i-1}\geq 2$ for all $i=3\cdots,\mathbf{q}$. When $\mathbf{q}\geq 10$, suppose that $t_{i-1}/t_{i-2}\geq 2$ for all $4\leq i\leq k\leq \mathbf{q}$. If $k$ is even, then $(k+2)/2\geq 3$ and thus by \eqref{coeff2faf00} and \eqref{coeff2faf100} we have
\begin{equation}\label{coeff3hilul-0}
\begin{split}
t_{k}=2t_1t_{k-1}+\cdots+2t_{(k-2)/2}t_{(k+2)/2}+t_{k/2}^2>4t_1t_{k-2}+\cdots+4t_{(k-2)/2}t_{k/2}=2t_{k-1}.
\end{split}
\end{equation}
Similarly, if $k$ is odd, then $(k+1)/2\geq 3$ and thus by \eqref{coeff2faf00} and \eqref{coeff2faf100} we have
\begin{equation}\label{coeff3hilul-00}
\begin{split}
t_{k}=2t_1t_{k-1}+\cdots+2t_{(k-1)/2}t_{(k+1)/2}\geq 4t_1t_{k-2}+\cdots+4t_{(k-1)/2}^2>2t_{k-1}.
\end{split}
\end{equation}
By the two inequalities in \eqref{coeff3hilul-0} and \eqref{coeff3hilul-00}, we have $t_{i}/t_{i-1}\geq 2$ for all $i=3,\cdots,\mathbf{q}$ when $\mathbf{q}\geq 3$. This gives the first inequality in \eqref{coqnfoiag}.

For the second inequality in \eqref{coqnfoiag}, when $3\leq \mathbf{q}\leq 9$, it is easy to see that the numbers in \eqref{coeff1-fuaf} satisfy $t_{i}/t_{i-1}\leq (i+2)/2$ for all $i=3,\cdots,\mathbf{q}$. When $\mathbf{q}\geq 10$, suppose that $t_{i-1}/t_{i-2}\leq (i+1)/2$ for all $4\leq i\leq k\leq \mathbf{q}$. If $k\geq 8$ is even, then $(k+2)/2\geq 5$ and thus by \eqref{coeff2faf00} we have
\begin{equation}\label{coeff3hilul}
\begin{split}
t_{k}&=2t_1t_{k-1}+\cdots+2t_{(k-2)/2}t_{(k+2)/2}+t_{k/2}^2\\
&\leq 2\cdot\frac{k+1}{2}t_1t_{k-2}+\cdots+2\left(\frac{k+6}{4}+\frac{k+4}{8}\right)t_{(k-2)/2}t_{k/2}.
\end{split}
\end{equation}
Since $(k+6)/4+(k+4)/8\leq (k+2)/2$ when $k\geq 8$ is even, we obtain from the inequality in \eqref{coeff3hilul} together with \eqref{coeff2faf100} that $t_{k}/t_{k-1}\leq (k+2)/2$. Similarly, if $k\geq 7$ is odd, then $(k-1)/2\geq 3$ and thus by \eqref{coeff2faf100} we have
\begin{equation}\label{coeff3hilul-1}
\begin{split}
t_{k}&=2t_1t_{k-1}+\cdots+2t_{(k-3)/2}t_{(k+3)/2}+t_{(k-1)/2}t_{(k+1)/2}+t_{(k-1)/2}t_{(k+1)/2}\\
&\leq 2\cdot\frac{k+1}{2}t_1t_{k-2}+\cdots+2\left(\frac{k+7}{4}+\frac{k+1}{8}\right)t_{(k-3)/2}t_{(k+1)/2}+\frac{k+5}{4}t_{(k-1)/2}^2.
\end{split}
\end{equation}
Since $(k+7)/4+(k+1)/8\leq (k+2)/2$ and $(k+5)/4<(k+2)/2$ when $k\geq 7$ is odd, we obtain from the inequality in \eqref{coeff3hilul-1} together with \eqref{coeff2faf00} that $t_{k}/t_{k-1}\leq (k+2)/2$. Thus we have $t_{i}/t_{i-1}\leq (i+2)/2$ for all $i=3,\cdots,\mathbf{q}$ when $\mathbf{q}\geq 3$. This gives the second inequality in \eqref{coqnfoiag}.

For the third inequality in \eqref{coqnfoiag}, we have the expressions in \eqref{coeff2faf-fu1} and \eqref{coeff2faf1-fu2}. If $\mathbf{q}\geq 4$ is even, then $(\mathbf{q}+2)/2\geq 3$ and, together with the first inequality in \eqref{coqnfoiag}, we have
\begin{equation}\label{coeff3hilul-nf}
\begin{split}
T_{\mathbf{q}+2}=2t_2t_{\mathbf{q}}+\cdots+t_{(\mathbf{q}+2)/2}^2\geq 2t_1t_{\mathbf{q}}+\cdots+2t_{\mathbf{q}/2}t_{(\mathbf{q}+2)/2}=T_{\mathbf{q}+1}.
\end{split}
\end{equation}
Similarly, if $\mathbf{q}\geq 4$ is odd, then $(\mathbf{q}+1)/2\geq 3$ and, together with the first inequality in \eqref{coqnfoiag}, we have
\begin{equation}\label{coeff3hilul-nf-1}
\begin{split}
T_{\mathbf{q}+2}&=2t_2t_{\mathbf{q}}+\cdots+2t_{(\mathbf{q}+1)/2}t_{(\mathbf{q}+3)/2}\\
&\geq 2t_1t_{\mathbf{q}}+\cdots+2t_{(\mathbf{q}-1)/2}t_{(\mathbf{q}+3)/2}+2t_{(\mathbf{q}+1)/2}^2>T_{\mathbf{q}+1}.
\end{split}
\end{equation}
Thus, by the two inequalities in \eqref{coeff3hilul-nf} and \eqref{coeff3hilul-nf-1}, we have $T_{\mathbf{q}+2}/T_{\mathbf{q}}\geq 1$ when $\mathbf{q}\geq 4$, which is the third inequality in \eqref{coqnfoiag}.

Finally, for the fourth inequality in \eqref{coqnfoiag}, if $\mathbf{q}\geq 4$ is even, then $(\mathbf{q}+2)/2\geq 3$ and, together with the second inequality in \eqref{coqnfoiag}, we have
\begin{equation}\label{coeff3hilul-nf-3}
\begin{split}
T_{\mathbf{q}+3}&=2t_3t_{\mathbf{q}}+\cdots+2t_{(\mathbf{q}+2)/2}t_{(\mathbf{q}+4)/2}\\
&<2\cdot\frac{5}{2}t_2t_{\mathbf{q}}+\cdots+2\cdot\frac{\mathbf{q}+6}{4}t_{\mathbf{q}/2}t_{(\mathbf{q}+4)/2}< \frac{\mathbf{q}+6}{4}T_{\mathbf{q}+2}.
\end{split}
\end{equation}
Similarly, if $\mathbf{q}\geq 4$ is odd, then $(\mathbf{q}+3)/2\geq 4$ and, together with the second inequality in \eqref{coqnfoiag}, we have
\begin{equation}\label{coeff3hilul-nf-4}
\begin{split}
T_{\mathbf{q}+3}&=2t_3t_{\mathbf{q}}+\cdots+t_{(\mathbf{q}+3)/2}^2\\
&\leq 2\cdot\frac{5}{2}t_2t_{\mathbf{q}}+\cdots+\frac{\mathbf{q}+7}{4}t_{(\mathbf{q}+1)/2}t_{(\mathbf{q}+3)/2}< \frac{\mathbf{q}+7}{4}T_{\mathbf{q}+2}.
\end{split}
\end{equation}
Thus, by the two inequalities in \eqref{coeff3hilul-nf-3} and \eqref{coeff3hilul-nf-4}, we have $T_{\mathbf{q}+3}/T_{\mathbf{q}+1}\leq (\mathbf{q}+7)/4$ when $\mathbf{q}\geq 4$, which is the fourth inequality in \eqref{coqnfoiag}. This completes the proof.

\end{proof}

\subsection{Step 2: Determining the polynomial $\psi$}\label{subse2: Step II of Kovacic's algorithm}

We have determined the function $h$ and the constant $c$ in the expression of $f=\kappa e^{h+cz}$ explicitly. Now we begin to determine the function $\kappa$. Recall that $b_0=c^2$. Together with the two expressions in \eqref{tumura-clunie-particular-2} and \eqref{tumura-clunie-particular-3}, we obtain from \eqref{tumura-clunie-particular-1} that
\begin{equation}\label{tumura-clunie-particular-1-fu}
\left(g^2+g'-b_{\mathbf{k}}e^{\mathbf{k}z}-b_{\mathbf{s}}e^{\mathbf{s}z}\right)\sum_{j=0}^ka_je^{jz}+2g\sum_{j=0}^k(c+j)a_je^{jz}+\sum_{j=0}^k(2c+j)ja_je^{jz}=0.
\end{equation}
We see that the left-hand side of equation \eqref{tumura-clunie-particular-1-fu} is a polynomial in $e^{z}$ of degree $k+\mathbf{m}+1$. We may write
\begin{equation*}
\begin{split}
L_{k+\mathbf{m}+1}e^{(k+\mathbf{m}+1)z}+L_{k+\mathbf{m}}e^{(k+\mathbf{m})z}+\cdots+L_{1}e^{z}=0,
\end{split}
\end{equation*}
where $L_1$, $\cdots$, $L_{k+\mathbf{m}+1}$ are linear combinations of $a_0$ and $a_k$. Thus we have $L_{k+\mathbf{m}+1}=\cdots=L_1=0$, giving the system of linear equations that $a_0$ and $a_k$ should satisfy.

Note that $\mathbf{q}=0$ when $\mathbf{s}/\mathbf{k}\leq 1/2$ and $\mathbf{q}\geq 1$ when $\mathbf{s}/\mathbf{k}>1/2$. When $\mathbf{q}\geq 1$, the inequality $(2\mathbf{q}-1)/(2\mathbf{q})<\mathbf{s}/\mathbf{k}\leq [2(\mathbf{q}+1)-1]/[2(\mathbf{q}+1)]$ implies that $\mathbf{q}s<\mathbf{m}+1\leq (\mathbf{q}+1)s$ and $\mathbf{q}\leq \mathbf{m}$. Recall that $\mathbf{k}$ and $\mathbf{s}$ are relatively prime or $\mathbf{m}+1$ and $\mathbf{s}/2$ are relatively prime and $\mathbf{m}+1$ is odd. If $\mathbf{m}+1<(\mathbf{q}+1)s$, then $\mathbf{s}=2(\mathbf{m}+1)-s<2\mathbf{m}+1$, $\mathbf{q}<\mathbf{m}$ and $s\geq 2$; if $\mathbf{m}+1=(\mathbf{q}+1)s$, then $\mathbf{s}=2\mathbf{m}+1$, $\mathbf{q}=\mathbf{m}$ and $s=1$.

When $\mathbf{q}\geq 1$, denote $t=\mathbf{m}+1-\mathbf{q}s$. If $\mathbf{q}<\mathbf{m}$, then we see that $\mathbf{s}<2\mathbf{m}+1$ and $s\geq t\geq 1$. More precisely, we must have $s\geq t+1$, for otherwise we would have $s=t$ and it follows that $\mathbf{k}=2s(\mathbf{q}+1)$ and $\mathbf{s}=s(2\mathbf{q}+1)$, a contradiction to our assumption. If $\mathbf{q}=\mathbf{m}$, then we see that $\mathbf{s}=2\mathbf{m}+1$ and $s=t=1$.

Below we consider the three cases (I) $\mathbf{s}/\mathbf{k}\leq 1/2$, (II) $\mathbf{s}/\mathbf{k}>1/2$ and $\mathbf{s}<2\mathbf{m}+1$, (III) $\mathbf{s}/\mathbf{k}\geq 3/4$ and $\mathbf{s}=2\mathbf{m}+1$ respectively.

\subsubsection{\bf Case~I: $\mathbf{s}/\mathbf{k}\leq 1/2$}\label{subsubse3: case I}

We consider the case when $\mathbf{s}/\mathbf{k}<1/2$. In this case, we have $\mathbf{q}=0$. By substituting $g=\zeta_0e^{(\mathbf{m}+1)z}$ into \eqref{tumura-clunie-particular-1-fu}, we obtain
\begin{equation}\label{polynomial equation E-1}
\begin{split}
\sum_{j=0}^k(2c+2j+\mathbf{m}+1)\zeta_0a_je^{(\mathbf{m}+1+j)z}-\sum_{j=0}^kb_{\mathbf{s}}a_je^{(j+\mathbf{s})z}+\sum_{j=0}^k(2c+j)ja_je^{jz}=0.
\end{split}
\end{equation}
By looking at the term of degree $k+\mathbf{m}+1$ in the polynomial in $e^z$ in the left-hand side of equation \eqref{polynomial equation E-1} and noting that $a_k\not=0$, we find
\begin{equation}\label{first case-con-1}
\begin{split}
2c+2k+\mathbf{m}+1=0.
\end{split}
\end{equation}
By assumption, $a_1=\cdots=a_{k-1}=0$ when $k\geq 2$. Since $\mathbf{m}+1>\mathbf{s}$, we look at the term of degree $\mathbf{s}$ in the polynomial in $e^z$ in the left-hand side of equation \eqref{polynomial equation E-1} and find $\mathbf{s}\leq k$ and also $b_{\mathbf{s}}a_{0}=(2c+\mathbf{s})\mathbf{s}a_{\mathbf{s}}$. Thus $a_{\mathbf{s}}\not=0$ and, by assumption, we have $k=\mathbf{s}$.

If $\mathbf{s}/\mathbf{k}\neq1/4$, then we have $\mathbf{s}<\mathbf{m}+1<2\mathbf{s}$ or $\mathbf{m}+1>2\mathbf{s}$. We look at the term of degree $\mathbf{m}+1$ in the polynomial in $e^z$ in the left-hand side of equation \eqref{polynomial equation E-1} and find $(2c+\mathbf{m}+1)\zeta_0a_0=0$, which is impossible by \eqref{first case-con-1}. Thus we must have $\mathbf{s}/\mathbf{k}=1/4$.

\subsubsection{\bf Case II: $\mathbf{s}/\mathbf{k}>1/2$ \textbf{and} $\mathbf{s}<2\mathbf{m}+1$ }\label{subsubse3: case II}

Now $\mathbf{s}=2(\mathbf{m}+1)-s$ for some integer $s\geq 2$. In this case, we have $\mathbf{q}\geq 1$. By substituting $g=\sum_{i=0}^{\mathbf{q}}\zeta_ie^{(\mathbf{m}+1-is)z}$ into \eqref{tumura-clunie-particular-1-fu}, we obtain
\begin{equation}\label{polynomial equation E-2}
\begin{split}
\left(\sum_{j=0}^ka_je^{jz}\right)\Omega_1(e^z)+2\left[\sum_{j=0}^k(c+j)a_je^{jz}\right]\Omega_2(e^z)+\sum_{j=0}^k(2c+j)ja_je^{jz}=0,
\end{split}
\end{equation}
where
\begin{equation*}
\begin{split}
\Omega_1(e^z)=\sum_{i=0}^{\mathbf{q}}A_{\mathbf{q}+1+i}e^{[2(\mathbf{m}+1)-(\mathbf{q}+1+i)s]z}+\sum_{i=0}^{\mathbf{q}}(\mathbf{m}+1-is)\zeta_ie^{(\mathbf{m}+1-is)z}
\end{split}
\end{equation*}
and
\begin{equation*}
\begin{split}
\Omega_2(e^z)=\sum_{i=0}^{\mathbf{q}}\zeta_ie^{(\mathbf{m}+1-is)z}.
\end{split}
\end{equation*}
By looking at the term of degree $k+\mathbf{m}+1$ in the polynomial in $e^z$ in the left-hand side of equation \eqref{polynomial equation E-2} and noting that $a_k\not=0$, we find
\begin{equation}\label{second case-con-2}
\begin{split}
2c+2k+\mathbf{m}+1=0.
\end{split}
\end{equation}
We write $\mathbf{m}+1-\mathbf{q}s=t\geq 1$. Note that $1\leq \mathbf{q}<\mathbf{m}$. In this case $\mathbf{m}+1-is$ decreases from $\mathbf{m}+1$ to $t$ by $s$ as $i$ increases from $0$ to $\mathbf{q}$ and $2(\mathbf{m}+1)-(\mathbf{q}+1+i)s=2t+(\mathbf{q}-1-i)s$ decreases from $2t+(\mathbf{q}-1)s$ to $2t$ by $s$ as $i$ increases from $0$ to $\mathbf{q}-1$. Note that $2(\mathbf{m}+1)-(2\mathbf{q}+1)s=2t-s\leq t-1$, but $A_{2\mathbf{q}+1}=0$.

By assumption, $a_1=\cdots=a_{k-1}=0$ when $k\geq 2$. We look at the term of degree $t$ in the polynomial in $e^z$ in the left-hand side of equation \eqref{polynomial equation E-2} and find $t\leq k$ and also
\begin{equation*}
\begin{split}
(2c+t)ta_{t}+[(\mathbf{m}+1-\mathbf{q}s)\zeta_{\mathbf{q}}+2c\zeta_{\mathbf{q}}]a_0=(2c+t)(ta_t+\zeta_{\mathbf{q}}a_0)=0.
\end{split}
\end{equation*}
By \eqref{second case-con-2} we see that $2c+t\not=0$. If $t<k$, then $a_t=0$ and it follows that $\zeta_{\mathbf{q}}a_0=0$, a contradiction. Thus, by assumption, we have $k=t$.

Recall that $s\geq t+1$. By the relations in \eqref{coeff2} and Lemma~\ref{lemma  solution0pre1} we see that $A_{\mathbf{q}+1+i}\not=0$ for $i=0,\cdots,\mathbf{q}-1$. By looking at the terms of degrees $t$ and $2t$ in the polynomial in $e^z$ in the left-hand side of equation \eqref{polynomial equation E-2}, we find
\begin{equation}\label{second case-con-3}
\begin{split}
ta_0\zeta_{\mathbf{q}}+2ca_0\zeta_{\mathbf{q}}+(2c+t)ta_t&=0,\\
a_0C_{2\mathbf{q}}+ta_t\zeta_{\mathbf{q}}+2(c+t)a_t\zeta_{\mathbf{q}}&=0.
\end{split}
\end{equation}
Note that $A_{2\mathbf{q}}=\zeta_{\mathbf{q}}^2$. Since $2c+t\not=0$, we obtain from the two equations in \eqref{second case-con-3} that $2(c+t)=0$, which is impossible by \eqref{second case-con-2}. Thus the case when $\mathbf{s}/\mathbf{k}>1/2$ and $\mathbf{s}<2\mathbf{m}+1$ cannot occur.

\subsubsection{\bf Case III: $\mathbf{s}/\mathbf{k}\geq 3/4$ \textbf{and} $\mathbf{s}=2\mathbf{m}+1$}\label{subsubse3: case III}

We consider the case when $\mathbf{s}/\mathbf{k}>3/4$ and $\mathbf{s}=2\mathbf{m}+1$. Then $\mathbf{q}=\mathbf{m}\geq 2$. By substituting $g=\sum_{i=0}^{\mathbf{q}}\zeta_ie^{(\mathbf{m}+1-i)z}$ into \eqref{tumura-clunie-particular-1-fu}, we obtain
\begin{equation}\label{polynomial equation E-3}
\begin{split}
\left(\sum_{j=0}^ka_je^{jz}\right)\Omega_3(e^z)+2\left[\sum_{j=0}^k(c+j)a_je^{jz}\right]\Omega_4(e^z)+\sum_{j=0}^k(2c+j)ja_je^{jz}=0,
\end{split}
\end{equation}
where
\begin{equation*}
\begin{split}
\Omega_3(e^z)=\sum_{i=0}^{\mathbf{m}}[A_{\mathbf{m}+1+i}+(\mathbf{m}+1-i)\zeta_i]e^{(\mathbf{m}+1-i)z}
\end{split}
\end{equation*}
and
\begin{equation*}
\begin{split}
\Omega_4(e^z)=\sum_{i=0}^{\mathbf{m}}\zeta_ie^{(\mathbf{m}+1-i)z}.
\end{split}
\end{equation*}
By looking at the term of degree $k+\mathbf{m}+1$ in the polynomial in $e^z$ in the left-hand side of equation \eqref{polynomial equation E-3} and noting that $a_k\not=0$, we find
\begin{equation}\label{third case-con-1}
\begin{split}
A_{\mathbf{m}+1}+(\mathbf{m}+2c+2k+1)\zeta_0=0.
\end{split}
\end{equation}
By looking at the term of degree $1$ in the polynomial in $e^z$ in the left-hand side of equation \eqref{polynomial equation E-3}, we find
\begin{equation}\label{realtaoni}
\begin{split}
(2c+1)a_1+[A_{2\mathbf{m}+1}+\zeta_{\mathbf{m}}+2c\zeta_{\mathbf{m}}]a_0=(2c+1)(a_{1}+\zeta_{\mathbf{m}}a_0)=0.
\end{split}
\end{equation}
If $a_1=0$, then we must have $2c+1=0$. In this case, $k\geq 2$. Then by looking at the term of degree $k+\mathbf{m}$ in the polynomial in $e^z$ in the left-hand side of equation \eqref{polynomial equation E-2}, we find
\begin{equation}\label{realtaoni-fha}
\begin{split}
A_{\mathbf{m}+2}+(\mathbf{m}+2c+2k)\zeta_1=0.
\end{split}
\end{equation}
Recall from \eqref{coeff2} the formulas
\begin{equation}\label{coeff2-fu}
\begin{split}
A_{\mathbf{m}+1+i}=(-1)^{\mathbf{m}+i-1}\frac{T_{\mathbf{m}+1+i}\zeta_1^{\mathbf{m}+1+i}}{(2\zeta_0)^{\mathbf{m}+i-1}}, \  i=0,\cdots,\mathbf{m}-1,
\end{split}
\end{equation}
where $T_{\mathbf{m}+1+i}$ are positive integers. From the two equations \eqref{third case-con-1} and \eqref{realtaoni-fha} together with the expressions in \eqref{coeff2-fu}, we have $-T_{\mathbf{m}+2}/2T_{\mathbf{m}+1}=(m+2c+2k)/(m+2c+2k+1)$, which is impossible when $2c+1=0$. Thus, by assumption, we have $k=1$.

Denote $\mathbf{d}=(-1)^{\mathbf{m}}T_{\mathbf{m}+1}\zeta_1^{\mathbf{m}+1}/(2\zeta_0)^{\mathbf{m}}$ for simplicity. Since $a_1\not=0$, together with \eqref{coeff2-fu} with $i=0$ we get from the relation in \eqref{third case-con-1} that
\begin{equation}\label{realtaoni-1}
\begin{split}
2\mathbf{d}=\mathbf{m}+2c+3.
\end{split}
\end{equation}
Recall from \eqref{coeff1} the formulas
\begin{equation}\label{coeff1-fu}
\begin{split}
\zeta_i=(-1)^{i-1}\frac{t_i\zeta_1^{i}}{(2\zeta_0)^{i-1}}, \  i=1,\cdots,\mathbf{m},
\end{split}
\end{equation}
where $t_i$ are positive integers. In addition to the numbers $t_1$, $\cdots$, $t_{\mathbf{m}}$ in \eqref{coeff1-fu}, we let $t_0=-1/2$ so that \eqref{coeff1-fu} holds for $i=0$. Now, by looking at the terms of degrees $2$, $\cdots$, $\mathbf{m}+1$ in the polynomial in $e^z$ in the left-hand side of equation \eqref{polynomial equation E-2}, we find
\begin{equation}\label{realtaoni-jjg}
\begin{split}
[A_{\mathbf{m}+1+i}&+(\mathbf{m}+1-i)\zeta_i+2(1+c)\zeta_{i}]a_1\\
&+[A_{\mathbf{m}+i}+(\mathbf{m}+2-i)\zeta_{i-1}+2c\zeta_{i-1}]a_0=0, \ i=1,\cdots,\mathbf{m}.
\end{split}
\end{equation}
Then, by \eqref{coeff2-fu} and \eqref{coeff1-fu}, we get from the relation in \eqref{realtaoni} that
\begin{equation*}
\begin{split}
\left[a_1+(-1)^{\mathbf{m}-1}\frac{t_{\mathbf{m}}\zeta_1^{\mathbf{m}}}{(2\zeta_0)^{\mathbf{m}-1}}a_0\right](2c+1)=0
\end{split}
\end{equation*}
and from the relations in \eqref{realtaoni-jjg} that
\begin{equation*}
\begin{split}
\frac{t_{i}}{t_{i-1}}\left(-\frac{\zeta_1}{2\zeta_0}\right)&\left[\frac{T_{\mathbf{m}+1+i}}{t_{i}T_{\mathbf{m}+1}}\mathbf{d}+\mathbf{m}-i+1+2(1+c)\right]a_1\\
&+\left[\frac{T_{\mathbf{m}+i}}{t_{i-1}T_{\mathbf{m}+1}}\mathbf{d}+\mathbf{m}-i+2+2c\right]a_0=0, \ i=1,\cdots,\mathbf{m}.
\end{split}
\end{equation*}
Denote $u=-a_1\zeta_1/2 a_0\zeta_0$ for simplicity. Recalling $\mathbf{d}=(-1)^{\mathbf{m}}T_{\mathbf{m}+1}\zeta_1^{\mathbf{m}+1}/(2\zeta_0)^{\mathbf{m}}$, we may rewrite the above two equations as
\begin{equation}\label{realtaoni-2-pre-1}
\begin{split}
\left(u+\frac{t_{\mathbf{m}}}{T_{\mathbf{m}+1}}\mathbf{d}\right)(2c+1)=0
\end{split}
\end{equation}
and
\begin{equation}\label{realtaoni-2hi-pre-1}
\begin{split}
u\frac{t_{i}}{t_{i-1}}&\left[\frac{T_{\mathbf{m}+1+i}}{t_{i}T_{\mathbf{m}+1}}\mathbf{d}+\mathbf{m}-i+1+2(1+c)\right]\\
&+\left[\frac{T_{\mathbf{m}+i}}{t_{i-1}T_{\mathbf{m}+1}}\mathbf{d}+\mathbf{m}-i+2+2c\right]=0, \ i=1,\cdots,\mathbf{m}.
\end{split}
\end{equation}
Recall \eqref{realtaoni-1}. When $\mathbf{m}=2$, recalling $t_0=-1/2$, $t_1=t_2=1$ and $T_3=2$, $T_4=1$, the two equations \eqref{realtaoni-2-pre-1} and \eqref{realtaoni-2hi-pre-1} read as $(2u+\mathbf{d})(2c+1)=0$, $2u(3+2c)+(\mathbf{d}+4+4c)=0$ and $u(\mathbf{d}+8+4c)+2=0$. Note that $u\not=0$. If $2c+1=0$, then $\mathbf{d}=2$ and the last two equations become $4u+4=0$ and $8u+2=0$, which are impossible. If $2c+1\not=0$, then from the three equations we obtain $(2c)^2-13(2c)+2=0$ and $5(2c)^2+30(2c)+101=0$, which are also impossible. When $\mathbf{m}=3$, recalling $t_0=-1/2$, $t_1=t_2=1$, $t_3=2$ and $T_{4}=5$, $T_{5}=4$, $T_{6}=4$, the two equations \eqref{realtaoni-2-pre-1} and \eqref{realtaoni-2hi-pre-1} read as $(5u+2\mathbf{d})(2c+1)=0$, $5u(3+2c)+(2\mathbf{d}+5+5c)=0$, $u(4\mathbf{d}+20+10c)+(4\mathbf{d}+15+10c)=0$ and $u(4\mathbf{d}+25+10c)+10=0$. Then it is easy to check by simple computations that there are no $u$ and $c$ satisfying these equations. Below we consider the case when $\mathbf{m}\geq 4$.

Suppose first that $-2c$ is not an integer or is an integer such that $-2c\leq 0$ or $-2c\geq \mathbf{m}+1$. Then $2c+1\not=0$ and we get from the relation in \eqref{realtaoni} that $a_1+\zeta_{\mathbf{m}}a_0=0$. Now $\mathbf{m}+2-i+2c\neq0$ for all $i=2,\cdots,\mathbf{m}$ in \eqref{realtaoni-jjg}. Recalling $A_{2\mathbf{m}+1}=0$, $A_{2\mathbf{m}}=\zeta_{\mathbf{m}}^2$ and $A_{2\mathbf{m}-1}=2\zeta_{\mathbf{m}-1}\zeta_{\mathbf{m}}$, we get from the relation in \eqref{realtaoni-jjg} for $i=\mathbf{m}$ that $\zeta_{\mathbf{m}-1}=\zeta_{\mathbf{m}}^2$ and then from the relation in \eqref{realtaoni-jjg} for $i=\mathbf{m}-1$ that $\zeta_{\mathbf{m}-2}=\zeta_{\mathbf{m}}^3$. Suppose that $\zeta_{i}=\zeta_{\mathbf{m}}^{\mathbf{m}-i+1}$ for $i=k,\cdots,\mathbf{m}$. Then from the relation in \eqref{realtaoni-jjg} we find
\begin{equation*}
\begin{split}
\left(\mathbf{m}+2-k+2c\right)\zeta_{k-1}=\zeta_{\mathbf{m}}\left[\left(\mathbf{m}+3-k+2c\right)\zeta_{k}\right]+\zeta_{\mathbf{m}}C_{\mathbf{m}+1+k}-C_{\mathbf{m}+k}.
\end{split}
\end{equation*}
It is easy to check that $\zeta_{\mathbf{m}}A_{\mathbf{m}+1+k}-A_{\mathbf{m}+k}=-\zeta_{\mathbf{m}}^{\mathbf{m}-k+2}$ using the formulas for $A_{\mathbf{m}+1+i}$, $i=1,\cdots,\mathbf{m}$ in \eqref{coeff2faf-fu1} and \eqref{coeff2faf1-fu2} and thus we have $\zeta_{k-1}=\zeta_{\mathbf{m}}^{\mathbf{m}-k+2}$. By induction we obtain from the equations in \eqref{realtaoni-jjg} for $i=2,\cdots,\mathbf{m}$ that $\zeta_{i}=\zeta_{\mathbf{m}}^{\mathbf{m}-i+1}$ for $i=1,\cdots,\mathbf{m}$. In particular, when $\mathbf{m}\geq 4$, we have $\zeta_2^2=\zeta_1\zeta_3$, i.e., $t_2^2=t_1t_{3}$, which is impossible by the first several numbers $t_i$ in \eqref{coeff1-fuaf}.

Suppose next that $-2c$ is an integer such that $1\leq -2c\leq \mathbf{m}$ and $-2c> (\mathbf{m}+4)/2$. If $2c+1=0$, then $2\mathbf{d}=\mathbf{m}+2$ by \eqref{realtaoni-1}. Recalling $t_0=-1/2$, $t_1=1$ and $T_{2\mathbf{m}}=t_{\mathbf{m}}^2$, the equations in \eqref{realtaoni-2hi-pre-1} with $i=1$ and $i=\mathbf{m}$ read as
\begin{equation*}
\begin{split}
u\left(\frac{T_{\mathbf{m}+2}}{T_{\mathbf{m}+1}}\mathbf{d}+\mathbf{m}+1\right)+1=0
\end{split}
\end{equation*}
and
\begin{equation*}
\begin{split}
2u\frac{t_{\mathbf{m}}}{t_{\mathbf{m}-1}}+\left(\frac{t_{\mathbf{m}}^2}{t_{\mathbf{m}-1}T_{\mathbf{m}+1}}\mathbf{d}+1\right)=0,
\end{split}
\end{equation*}
respectively. It follows that
\begin{equation}\label{realtaoni-2hi-pre-5}
\begin{split}
2\frac{t_{\mathbf{m}}}{t_{\mathbf{m}-1}}=\left(\frac{T_{\mathbf{m}+2}}{T_{\mathbf{m}+1}}\mathbf{d}+\mathbf{m}+1\right)\left(\frac{t_{\mathbf{m}}}{t_{\mathbf{m}-1}}\frac{t_{\mathbf{m}}}{T_{\mathbf{m}+1}}\mathbf{d}+1\right).
\end{split}
\end{equation}
By the inequalities in \eqref{coqnfoiag} in Lemma~\ref{lemma  solution0pre1}, we see that the left-hand side of equation \eqref{realtaoni-2hi-pre-5} is $\leq \mathbf{m}+3$ while the right-hand side of equation \eqref{realtaoni-2hi-pre-5} is $>(3\mathbf{m}+4)/2$, which is impossible when $\mathbf{m}\geq 4$. Thus $2c+1\not=0$. Since $-2c>(\mathbf{m}+4)/2$, then $\mathbf{m}+2-i+2c\neq0$ for all $i=\mathbf{m}/2,\cdots,\mathbf{m}$ when $\mathbf{m}$ is even or for all $i=(\mathbf{m}+1)/2,\cdots,\mathbf{m}$ when $\mathbf{m}$ is odd in \eqref{realtaoni-jjg}. Then by the previous induction process, we have $\zeta_{i}=\zeta_{\mathbf{m}}^{\mathbf{m}-i+1}$ for $i=(\mathbf{m}-2)/2,\cdots,\mathbf{m}$ when $\mathbf{m}$ is an even integer and $\zeta_{i}=\zeta_{\mathbf{m}}^{\mathbf{m}-i+1}$ for $i=(\mathbf{m}-1)/2,\cdots,\mathbf{m}$ when $\mathbf{m}$ is an odd integer. When $\mathbf{m}$ is an even integer, we have $\zeta_{\mathbf{m}-1}^2=\zeta_{\mathbf{m}}\zeta_{\mathbf{m}-2}$, $\cdots$, $\zeta_{\mathbf{m}/2}^2=\zeta_{(\mathbf{m}+2)/2}\zeta_{(\mathbf{m}-2)/2}$, i.e., $t_{\mathbf{m}-1}^2=t_{\mathbf{m}}t_{\mathbf{m}-2}$, $\cdots$, $t_{\mathbf{m}/2}^2=t_{(\mathbf{m}+2)/2}t_{(\mathbf{m}-2)/2}$. Thus we have
\begin{equation*}
\begin{split}
\frac{t_{\mathbf{m}}}{t_{\mathbf{m}-1}}=\frac{t_{\mathbf{m}-1}}{t_{\mathbf{m}-2}}=\cdots=\frac{t_{(\mathbf{m}+2)/2}}{t_{\mathbf{m}/2}}=\frac{t_{\mathbf{m}/2}}{t_{(\mathbf{m}-2)/2}}.
\end{split}
\end{equation*}
Then by the two equations in \eqref{coeff2faf00} and \eqref{coeff2faf100}, we have
\begin{equation*}
\begin{split}
t_{\mathbf{m}}=2t_1\cdot\frac{t_{\mathbf{m}}}{t_{\mathbf{m}-1}}\cdot t_{\mathbf{m}-2}+\cdots+2t_{(\mathbf{m}-2)/2}\cdot\frac{t_{\mathbf{m}}}{t_{\mathbf{m}-1}}\cdot t_{\mathbf{m}/2}+t_{\mathbf{m}/2}^2=t_{\mathbf{m}}+t_{\mathbf{m}/2}^2,
\end{split}
\end{equation*}
which is impossible. Similarly, when $\mathbf{m}$ is an odd integer, we have
\begin{equation*}
\begin{split}
\frac{t_{\mathbf{m}}}{t_{\mathbf{m}-1}}=\frac{t_{\mathbf{m}-1}}{t_{\mathbf{m}-2}}=\cdots=\frac{t_{(\mathbf{m}+3)/2}}{t_{(\mathbf{m}+1)/2}}=\frac{t_{(\mathbf{m}+1)/2}}{t_{(\mathbf{m}-1)/2}}
\end{split}
\end{equation*}
and by the two equations in \eqref{coeff2faf00} and \eqref{coeff2faf100}, we have
\begin{equation*}
\begin{split}
t_{\mathbf{m}}=2t_1\cdot\frac{t_{\mathbf{m}}}{t_{\mathbf{m}-1}}\cdot t_{\mathbf{m}-2}+\cdots+2t_{(\mathbf{m}-1)/2}\cdot\frac{t_{\mathbf{m}}}{t_{\mathbf{m}-1}}\cdot t_{(\mathbf{m}-1)/2}=t_{\mathbf{m}}+t_{(\mathbf{m}-1)/2}t_{(\mathbf{m}+1)/2},
\end{split}
\end{equation*}
which is also impossible.

Suppose now that $-2c$ is an integer such that $1\leq -2c\leq \mathbf{m}$ and $-2c\leq (\mathbf{m}+4)/2$. Recall equation \eqref{realtaoni-1}. Then we have $2\mathbf{d}=\mathbf{m}+2c+3\geq(\mathbf{m}+2)/2\geq 3$. The equations in \eqref{realtaoni-2hi-pre-1} with $i=1$ and $i=2$ read as
\begin{equation*}
\begin{split}
u\left(\frac{T_{\mathbf{m}+2}}{T_{\mathbf{m}+1}}\mathbf{d}+2\mathbf{d}-1\right)+1=0
\end{split}
\end{equation*}
and
\begin{equation*}
\begin{split}
u\left(\frac{T_{\mathbf{m}+3}}{T_{\mathbf{m}+1}}\mathbf{d}+2\mathbf{d}-2\right)+\left(\frac{T_{\mathbf{m}+2}}{T_{\mathbf{m}+1}}\mathbf{d}+2\mathbf{d}-3\right)=0,
\end{split}
\end{equation*}
respectively. It follows that
\begin{equation}\label{realtaoni-2hbuo-3}
\begin{split}
\frac{T_{\mathbf{m}+3}}{T_{\mathbf{m}+1}}\mathbf{d}+2\mathbf{d}-2&=\left(\frac{T_{\mathbf{m}+2}}{T_{\mathbf{m}+1}}\mathbf{d}+2\mathbf{d}-1\right)\left(\frac{T_{\mathbf{m}+2}}{T_{\mathbf{m}+1}}\mathbf{d}+2\mathbf{d}-3\right)\\
&=\left(\frac{T_{\mathbf{m}+2}}{T_{\mathbf{m}+1}}\right)^2\mathbf{d}^2+\frac{T_{\mathbf{m}+2}}{T_{\mathbf{m}+1}}\mathbf{d}(4\mathbf{d}-4)+(2\mathbf{d}-1)(2\mathbf{d}-3).
\end{split}
\end{equation}
By the last two inequalities in \eqref{coqnfoiag} in Lemma~\ref{lemma  solution0pre1} we see that the left-hand and right-hand sides of equation \eqref{realtaoni-2hbuo-3} satisfy
\begin{equation*}
\begin{split}
\text{the left-hand side}
&\leq \frac{T_{\mathbf{m}+2}}{T_{\mathbf{m}+1}}\left(\frac{\mathbf{m}+7}{4}\right)\mathbf{d}+2\mathbf{d}-2,\\
\text{the right-hand side}
&\geq\frac{T_{\mathbf{m}+2}}{T_{\mathbf{m}+1}}\mathbf{d}(5\mathbf{d}-4)+(2\mathbf{d}-1)(2\mathbf{d}-3).
\end{split}
\end{equation*}
Denote
\begin{equation*}
\begin{split} \Delta=\frac{T_{\mathbf{m}+2}}{T_{\mathbf{m}+1}}\mathbf{d}\left(5\mathbf{d}-4-\frac{\mathbf{m}+7}{4}\right)+(2\mathbf{d}-1)(2\mathbf{d}-3)-(2\mathbf{d}-2).
\end{split}
\end{equation*}
Since $5\mathbf{d}-4-(\mathbf{m}+7)/4\geq(4\mathbf{m}-13)/2\geq 3/2$ and thus by Lemma~\ref{lemma  solution0pre1} we have $\Delta \geq 4\mathbf{d}^2-10\mathbf{d}+13/2=(2\mathbf{d}-5/2)^2+1/4>0$, which implies that equation \eqref{realtaoni-2hbuo-3} is impossible. Thus the case when $\mathbf{s}/\mathbf{k}>3/4$ and $\mathbf{s}=2\mathbf{m}+1$ cannot occur.

Finally, by summarising the results in the above three cases together with Theorem~\ref{maintheorem3}, we conclude that $\mathbf{s}/\mathbf{k}=1/4$ or $\mathbf{s}/\mathbf{k}=2/4$ or $\mathbf{s}/\mathbf{k}=3/4$. This completes the proof.

\section{Concluding remarks}\label{se5: concluding remarks}

We show that if the generalized Hill equation \eqref{Hill Eq General} has a nonzero non-oscillatory solution, then it is just the Hill equation. We also point out that there is a full correspondence between the class of non-oscillatory solutions of the Hill equation~\eqref{Hill Eq} and the class of Liouvillian solutions of equation~\eqref{Hill equation-equi}. Kovacic's algorithms are used to find the non-oscillatory solutions of equation~\eqref{Hill Eq}.

We actually give an explicit representation for the non-oscillatory solutions of the higher order linear differential equation~\eqref{higher order-0}. Here we further consider the higher order linear differential equation
\begin{equation}\label{higher order-1}
f^{(n)}+\alpha_{n-1}f^{(n-1)}+\cdots+\alpha_1f'-(E+\beta_0)f=0,
\end{equation}
where $\alpha_1$, $\cdots$, $\alpha_{n-1}$ and $\beta_0$ are constants and $E$ is the exponential polynomial in~\eqref{EQ1-Exp-pol}. Suppose that equation \eqref{higher order-1} has a nonzero non-oscillatory solution $f$ such that $\lambda(f)<1$. By Hadamard's factorization $f=\kappa e^{\mathbf{h}}$ and similar arguments as in the proof of Theorem~\ref{maintheorem1}, we may show that $\mathbf{h}'$ has the form in \eqref{TCsolu} with $c$ being a polynomial. It follows that the function $\kappa_c=\kappa e^{\int c dz}$ satisfies the linear differential equation
\begin{equation}\label{bank-laine0-fu-gen-final-0}
\kappa_c^{(n)}+E_{1}\kappa_c^{(n-1)}+\cdots+E_{n-1}\kappa_c'-\left(E+\beta_0-E_{n}\right)\kappa_c=0,
\end{equation}
where $E_i=Q_{i}(h')$, $i=0,\cdots,n-1$ are differential polynomials in $h'$ of degree~$i$ with constant coefficients and $h'=\mathbf{h}'-c$. Note that $E_1$, $\cdots$, $E_{n-1}$ and $E-E_n$ are all exponential polynomials. Let the critical lines of $h'$ and $E-E_n$ be $\mathcal{L}_i:z=re^{\mathbf{i}\theta_i}$, $i=1,\cdots,m$ with arguments in $[0,2\pi)$ such that $\theta_1<\theta_2<\cdots<\theta_{m}<\theta_{m+1}=\theta_1+2\pi$. For a small $\epsilon>0$, we denote
\begin{equation*}
\mathcal{B}_{i,\epsilon}=\left\{re^{\mathbf{i}\theta}: \ 0< r<\infty, \ \theta\in(\theta_i+\epsilon,\theta_{i+1}-\varepsilon) \right\}, \ i=1,\cdots,m.
\end{equation*}
We may suppose that the Phragm\'{e}n--Lindel\"{o}f indicator $h_{E_{n-1}}(\theta)>0$ in the sectors $\mathcal{B}_{i,\epsilon}$, $i=1,\cdots,m_0$. By similar arguments as in the proof of Theorem~\ref{maintheorem1}, for $i=1,\cdots, m_0$, there are two constants $\mathbf{c}_i$ and $a_i\neq0$ such that
\begin{equation*}
\begin{split}
\kappa_c(z)=a_{i}e^{\mathbf{c}_iz}[1+o(1)]
\end{split}
\end{equation*}
uniformly as $z\to\infty $ in $\mathcal{B}_{i,\epsilon}$. Note that $\sigma(\kappa)=\lambda(\kappa)<1$. If $m_0=m$, then by the Phragm\'{e}n--Lindel\"{o}f theorem, this easily yields that $\sigma(\kappa_c)\leq 1$ and it follows that $c$ is constant. Thus $\kappa(z)=a_{i}e^{(\mathbf{c}_i-c)z}[1+o(1)]$ uniformly as $z\to\infty $ in $\mathcal{B}_{i,\epsilon}$ for $i=1,\cdots,m$. If $\mathbf{c}_i-c\neq 0$ for some $i$, we claim that $\kappa(z)\to 0$ uniformly as $z\to\infty$ in $\mathcal{B}_{i,\epsilon}$. Otherwise, for a small $\epsilon>0$ we may choose a ray in $\mathcal{B}_{i,\epsilon}$ so that $|\kappa(z)|\geq |a_{i}/2||e^{(\mathbf{c}_i-c)z}|$ for all $z$ on the ray with large $r$, a contradiction to our assumption that $\sigma(\kappa)<1$. Thus, for $i=1,\cdots, m$, either $\kappa(z)\to a_{i}$ uniformly as $z\to\infty$ in $\mathcal{B}_{i,\epsilon}$ or $\kappa(z)\to 0$ uniformly as $z\to\infty$ in $\mathcal{B}_{i,\epsilon}$. By the Phragm\'{e}n--Lindel\"{o}f theorem, we conclude that $\kappa$ is constant and nonzero. If $m_0<m$, then $h_{E_{n-1}}(\theta)<0$ for $i=m_0+1,\cdots,m$. For one $\theta$ such that $h_{E_{n-1}}(\theta)<0$ and the ray $z=re^{\mathbf{i}\theta}$ meets finitely many disks in the associated $R$-set of $\kappa$, we obtain from \eqref{bank-laine0-fu-gen-final-0} that
\begin{equation*}
\begin{split}
\frac{\kappa_c^{(n)}(z)}{\kappa_c(z)}=\beta_0+o(1)
\end{split}
\end{equation*}
as $z\to\infty$ along the ray. Since $\sigma(\kappa)<1$, this together with \eqref{Gro-estimate-2} gives $c^n[1+o(1)]=\beta_0+o(1)$ as $z\to\infty$ along the ray, which implies that $c$ is constant. Moreover, for $i=1,\cdots, m_0$, either $\kappa(z)\to a_{i}$ uniformly as $z\to\infty$ in $\mathcal{B}_{i,\epsilon}$ or $\kappa(z)\to 0$ uniformly as $z\to\infty$ in $\mathcal{B}_{i,\epsilon}$. Since the intersection angle of the two critical lines $L_1$ and $L_{m_0+1}$ is $\leq \pi$, then by the Phragm\'{e}n--Lindel\"{o}f theorem, we conclude that $\kappa$ is constant and nonzero. We have proved the following

\begin{theorem}\label{maintheorem6}
Suppose that equation \eqref{higher order-1} has a nonzero non-oscillatory solution $f$ such that $\lambda(f)<1$. Then there are two integers $\mathbf{m},\mathbf{n}\geq 0$ and complex constants $a_0\neq0$, $c$, $c_{i}$, $d_{j}$, $\varsigma_{i}$ and $\tau_{j}$ such that $\varsigma_0=\tau_0=0$ and
\begin{equation*}
\begin{split}
f(z)=a_0e^{cz}\exp\left(\sum_{i=0}^{\mathbf{m}}c_{i}e^{\frac{\mu_{\mathbf{k}}-\varsigma_i}{n}z}+\sum_{j=0}^{\mathbf{n}}d_{j}e^{-\frac{\nu_{\mathbf{l}}-\tau_j}{n}z}\right).
\end{split}
\end{equation*}
\end{theorem}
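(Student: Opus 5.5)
The plan is to repeat the Hadamard-factorization argument from the proof of Theorem~\ref{maintheorem1}, now with the general exponential polynomial $E+\beta_0$ as coefficient. Since $\lambda(f)<1$ and $\sigma(f)=\infty$, write $f=\kappa e^{\mathbf{h}}$, where $\kappa$ is the canonical product of the zeros, so that $\sigma(\kappa)=\lambda(\kappa)<1$. Substituting into \eqref{higher order-1} and using \eqref{bank-laine0-derivative-2} gives the Tumura--Clunie type equation
\[
(\mathbf{h}')^n+P(z,\mathbf{h}')=E+\beta_0 .
\]
All coefficients of $P$ are of the form $\kappa^{(j)}/\kappa\in\mathcal{H}$ or constants, so Theorem~\ref{maintheorem5} applies. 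Hence $\mathbf{h}'=c(z)+g_1+g_2$, where $g_1,g_2$ are the two exponential sums in \eqref{TCsolu} and $\sigma(c)<1$.

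Next I would upgrade $c$ to a polynomial. On rays avoiding $\mathcal{R}$, Gundersen's estimate \eqref{Gro-estimate-2} bounds the terms $\kappa^{(j)}/\kappa$ by $r^N$. The remainder in the ray expansions \eqref{EQ1-algorim-4} is then polynomially bounded, and Lemma~\ref{smallfunctionlemma}, rerun with a polynomial bound in place of $\exp(r^{\sigma_0+\varepsilon})$, makes $c$ a polynomial.

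Then set $\kappa_c=\kappa e^{\int c\,dz}$ and $h'=\mathbf{h}'-c$. This yields the linear equation \eqref{bank-laine0-fu-gen-final-0}, whose coefficients $E_i$ and $E-E_n$ are exponential polynomials. Cut the plane by the critical lines of $h'$ and $E-E_n$ into sectors $\mathcal{B}_{i,\epsilon}$.

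In sectors where $h_{E_{n-1}}>0$, divide by $E_{n-1}$ exactly as in \eqref{second tumura-clunie-2}. This gives $\kappa_c'/\kappa_c=\mathbf{c}_i+(\text{exponentially small})$, where the absence of exponentially large terms uses the fact that the dominant parts of $E-E_n$ must cancel, forced by the order bound on $\kappa_c$. Integrating along rays and applying Phragm\'en--Lindel\"of gives $\kappa_c=a_ie^{\mathbf{c}_iz}(1+o(1))$ uniformly in $\mathcal{B}_{i,\epsilon}$.

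If every sector is of this type, then $\sigma(\kappa_c)\le1$, so $c$ is constant. Since $\sigma(\kappa)<1$, in each sector either $\kappa\to a_i$ or $\kappa\to0$. Otherwise a ray with $|\kappa|\ge|a_i/2||e^{(\mathbf{c}_i-c)z}|$ would contradict $\sigma(\kappa)<1$. Phragm\'en--Lindel\"of then makes $\kappa$ bounded, hence constant by Liouville.

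If some sector has $h_{E_{n-1}}<0$, the coefficients decay there. The equation reduces to $\kappa_c^{(n)}/\kappa_c=\beta_0+o(1)$, and with $\kappa_c^{(n)}/\kappa_c=c^n(1+o(1))$, again by \eqref{Gro-estimate-2} because $\sigma(\kappa)<1$, the polynomial $c$ is forced to be constant. The remaining sectors are handled as before. Finally, $f=a_0e^{cz}e^{\int(g_1+g_2)}$, and integrating the exponential sums termwise gives the stated form with new constants $c_i,d_j$.

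The main obstacle is the case where some sectors have $h_{E_{n-1}}<0$. There the asymptotics of $\kappa$ are unknown, and Phragm\'en--Lindel\"of can only be applied if the complementary angle is at most $\pi$ (in fact $<\pi$ for order $<1$ functions). I would try to exploit $\sigma(\kappa)<1$ directly: a function of order $<1$ bounded on two rays bounding an angle $\le\pi$ of the remaining region is bounded there. I would check carefully that the sectors where $E_{n-1}$ decays span an angle of at most $\pi$, which holds since $h_{E_{n-1}}$ is a positive maximum of cosines over an arc of length at least $\pi$.
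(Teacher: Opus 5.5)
Your proposal is correct and follows essentially the same route as the paper's proof: Hadamard factorization, Theorem~\ref{maintheorem5} with $c$ upgraded to a polynomial, the equation \eqref{bank-laine0-fu-gen-final-0} for $\kappa_c$ on the sectors $\mathcal{B}_{i,\epsilon}$, the split by the sign of $h_{E_{n-1}}$, and Phragm\'en--Lindel\"of on the leftover angle of opening at most $\pi$, which is legitimate precisely because $\sigma(\kappa)<1$. One small repair, which the paper also glosses over: where $h'\to 0$ the coefficients $E_{n-i}$ tend to $\alpha_i$ rather than $0$, so $h_{E_{n-1}}=0$ there if $\alpha_1\neq 0$; the case split should therefore be by the sign of $h_{h'}$, and the reduced relation is $\sum_{j=1}^{n}\alpha_j\kappa_c^{(j)}/\kappa_c=\beta_0+o(1)$ with $\alpha_n=1$, which still forces $c$ to be constant.
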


When $1\leq \lambda(f)<\infty$, if we could determine the form of the entire function $\kappa$ in Hadamard's factorization $f=\kappa e^{\mathbf{h}}$, then we may develop an algorithm to find the non-oscillatory solutions of equation \eqref{higher order-1}. However, the method in the proof of Theorem~\ref{maintheorem1} is not valid any more in this case. On the other hand, we may extend Theorem~\ref{maintheorem6} by replacing $E$ in~\eqref{EQ1-Exp-pol} with the general exponential polynomial
\begin{equation}\label{EQ1-fu-1}
\begin{split}
\mathbf{E}(z)=\sum_{i=0}^{\mathbf{k}-1}\beta_{\mathbf{k}-i}(z)e^{\mu_{\mathbf{k}-i}z^k}+\sum_{j=0}^{\mathbf{l}-1}\gamma_{\mathbf{l}-j}(z)e^{-\nu_{\mathbf{l}-j}z^k},
\end{split}
\end{equation}
where $k\geq 1$ is an integer, $\beta_{\mathbf{k}}$, $\cdots$, $\beta_{1}$ and $\gamma_{\mathbf{l}}$, $\cdots$, $\gamma_{1}$ are entire functions having order of growth~$<k$. Then we may obtain similar conclusions as in Theorems~\ref{maintheorem5} and~\ref{maintheorem6}. In the second order case, see \cite{zhang2021-1} for the case when $\mathbf{E}$ in \eqref{EQ1-fu-1} contains only two exponential terms. See also~\cite{Heittokangasilt2021} for some results.

In the end of this paper, we point out that equation~\eqref{higher order-0} with $\varrho=1$ is a special case of the more general higher order linear differential equation
\begin{equation}\label{higher order-2}
f^{(n)}+K_{n-1}(e^{z},e^{-z})f^{n-1}+\cdots+K_1(e^{z},e^{-z})f'-K(e^{z},e^{-z})f=0,
\end{equation}
where $K_{i}(x,x^{-1})$, $i=1,\cdots,n-1$ are rational in $x$ and analytic in $\mathbb{C}-\{0\}$. Suppose that equation \eqref{higher order-2} has a nonzero non-oscillatory solution $f$. When $\sigma(f)<\infty$, Steinbart~\cite[Theorem~1]{Steinbart1996} has given a precise representation for $f$. When $\sigma(f)=\infty$, by Hadamard's factorization $f=\kappa e^{\mathbf{h}}$ together with~\eqref{bank-laine0-derivative-1} and \eqref{bank-laine0-derivative-2}, we get from equation \eqref{higher order-2} that
\begin{equation*}
P_{n}(z,\mathbf{h}')+K_{n-1}(e^{z},e^{-z})P_{n-1}(z,\mathbf{h}')+\cdots+K_1(e^{z},e^{-z})P_{1}(z,\mathbf{h}')=K(e^{z},e^{-z}),
\end{equation*}
where $P_{n-i}(z,\mathbf{h}')$, $i=0,1,\cdots,n-1$ are differential polynomials in $\mathbf{h}'$ of degree $n-i$. When $K_{i}(x,x^{-1})$, as a sum of two polynomials in $x$ and $x^{-1}$, has smaller degrees in $x$ and $x^{-1}$ than those of $K(x,x^{-1})$ for all $i=1,\cdots,n-1$, we may still have Lemmas~\ref{orderlemma} and~\ref{growthlemma} and then show that $\mathbf{h}'$ is of the form in \eqref{bank-laine0-tumura-clunie-1}. This will still yield the conclusion in Theorem~\ref{maintheorem1}; see~\cite[Theorem~2.1]{Shimomura2002}. In the general case, it is hoped to show that $\mathbf{h}'$ is rational in $e^{z/n}$ by extending the method in the proof of Theorem~\ref{maintheorem5}.

%\section*{Acknowledgements}

%Acknowledgements and other unnumbered sections can be achieved by using the \verb|\section*| command:

%\begin{verbatim}
%\section*{Acknowledgment}
%\end{verbatim}

%\section*{Back Matter}

%\subsection{References} \label{sec3.18}

%\section*{References}

\end{document}